\theoremstyle{plain}
\newtheorem{theorem}{Theorem}[section]
\newaliascnt{corollary}{theorem}
\newtheorem{corollary}[corollary]{Corollary}
\newaliascnt{lemma}{theorem}
\newtheorem{lemma}[lemma]{Lemma}
\newaliascnt{proposition}{theorem}
\newtheorem{proposition}[proposition]{Proposition}
\newaliascnt{hypotheses}{theorem}
\newtheorem{hypotheses}[hypotheses]{Hypotheses}
\theoremstyle{definition}
\newaliascnt{definition}{theorem}
\newtheorem{definition}[definition]{Definition}
\newaliascnt{example}{theorem}
\newaliascnt{remark}{theorem}
\newaliascnt{remarks}{theorem}
\newtheorem{remarks}[remarks]{Remarks}
\numberwithin{equation}{section}
\newcommand\shtitle{Values of singular currents}
\newcommand\shortauthor{Luis E. Garcia}
\begin{document}
\title{On the values of some singular currents on Shimura varieties of orthogonal type}
\author{Luis E. Garcia}
\date{}

\begin{abstract}
In \cite{GarciaRegularizedLiftsI}, we introduced a regularized theta lift for reductive dual pairs of the form $(Sp_4,O(V))$ with $V$ a quadratic vector space over a totally real number field $F$. The lift takes values in the space of $(1,1)$-currents on the Shimura variety attached to $GSpin(V)$, and we proved that its values are cohomologous to currents given by integration on special divisors against automorphic Green functions. In this paper, we will evaluate the regularized theta lift on differential forms obtained as usual (non-regularized) theta lifts. Using the Siegel-Weil formula and ideas of Piatetskii-Shapiro and Rallis, we show that the result involves near central special values of standard $L$-functions for $Sp_{4,F}$. An example concerning products of Shimura curves will be given at the end of the paper. 
\end{abstract}
\maketitle
\setcounter{tocdepth}{2}
\tableofcontents


%
%
%
%

\section{Introduction} \label{section:Introduction} \subsection{Background and main results} In the paper \cite{GarciaRegularizedLiftsI}, we considered some $(1,1)$-currents on GSpin Shimura varieties; in this paper we will evaluate those currents on certain differential forms. We fix a quadratic vector space $V$ over a totally real field $F$ with signature of the form $(n,2)$ at one real place of $F$ and positive definite at all other real places, with $n$ positive and even. If we denote by $H=Res_{F/\mathbb{Q}}GSpin(V)$ the restriction of scalars of the algebraic group $GSpin(V)$, then the complex points of the Shimura variety $X_K$ attached to a neat open compact subgroup $K \subset H(\mathbb{A}_f)$ are given by
\begin{equation}
X_K(\mathbb{C}) = H(\mathbb{Q}) \backslash \mathbb{D} \times H(\mathbb{A}_f) /K,
\end{equation}
where $\mathbb{D}$ denotes the hermitian symmetric domain attached to the Lie group $SO(n,2)$. The currents in question are denoted by 
\begin{equation}
[\Phi(T,\varphi)] \in \mathcal{D}^{1,1}(X):=\varprojlim_K \mathcal{D}^{1,1}(X_K)
\end{equation} 
and are parametrized a pair $(T,\varphi)$ consisting of a totally positive definite symmetric matrix $T \in Sym_2(F)$ and a Schwartz function $\varphi \in \mathcal{S}(V(\mathbb{A}_f)^2)$. Their interest lies in the fact that their $\mathbb{Q}$-linear span includes many currents of the form
\begin{equation}
2\pi i \log|f| \cdot \delta_{Y},
\end{equation}
where $Y$ is a special Shimura subvariety of $X$ and $f$ is one of the meromorphic functions on $Y$ constructed in \cite{Bruinier}. Using a regularized theta lift, we also constructed a different family of currents $[\tilde{\Phi}(T,\varphi,s)]$ depending on a complex parameter $s$ and related the two types of currents (see \cite[Theorem 1.1]{GarciaRegularizedLiftsI}).

The present paper is concerned with evaluating the currents $[\tilde{\Phi}(T,\varphi,s)]$ on differential forms. Having constructed these currents as regularized theta lifts for the reductive dual pair $(Sp_4,O(V))$, it seems reasonable to try to evaluate them on differential forms that also arise as usual (i.e. absolutely convergent) theta lifts for this dual pair. Let us briefly describe these forms: in \autoref{subsection:theta_lifts_diff_forms} we review the construction of the theta function
\begin{equation}
\theta(g;\varphi \otimes \varphi_\infty) \in \mathcal{A}^{n-1,n-1}(X):=\varinjlim_K \mathcal{A}^{n-1,n-1}(X_K)
\end{equation}
attached to a Schwartz form
\begin{equation}
\varphi \otimes \varphi_\infty \in [\mathcal{S}(V(\mathbb{A})^2) \otimes \mathcal{A}^{n-1,n-1}(\mathbb{D})]^{H(\mathbb{R})}.
\end{equation} 
Here $g \in Sp_4(\mathbb{A}_F)$ and the function $\theta( \cdot \ ;\varphi \otimes \varphi_\infty)_K$ is of moderate growth and left invariant under $Sp_4(F)$. Given a cusp form $f \in \mathcal{A}_0(Sp_{4,F})$, the integral
\begin{equation}
\theta(f,\varphi \otimes \varphi_\infty)=\int_{Sp_4(F) \backslash Sp_4(\mathbb{A}_F)}\overline{f(g)}\theta(g;\varphi \otimes \varphi_\infty) dg
\end{equation}
converges due to the rapid decrease of $f$ and defines a differential form in $\mathcal{A}^{n-1,n-1}(X)$ known as a theta lift of $f$. The properties of these theta lifts have been extensively studied, and in particular the question of when theta lifts are not identically zero is known to be related to special values of automorphic $L$-functions. More precisely, assume that $f$ generates an irreducible cuspidal automorphic representation $\pi \subset \mathcal{A}_0(Sp_{4,F})$. One can interpret the theta lift $\theta(f,\varphi \otimes \varphi_\infty)$ as an automorphic form $\theta_\varphi(f)$ on $O(V)(\mathbb{A})$, and a fundamental theorem due to Rallis \cite{Rallis1982} states that the Petersson inner product $(\theta_\varphi(f),\theta_\varphi(f))_{Pet}$ is, up to an explicit non-zero constant, equal to a finite product of local zeta integrals times a special value of the incomplete standard $L$-function $L^S(\pi \times \chi_V,std,s)$. In particular, this formula, widely known as the Rallis inner product formula, often implies that the theta lift is not identically zero.

Our main result in this paper will be a computation of the values of $[\tilde{\Phi}(T,\tilde{\varphi},s)]$ on the forms $\theta(f,\varphi \otimes \varphi_\infty)$ that resembles the Rallis inner product formula just described. This is the content of \autoref{thm:Rallis_pairing} that we restate below. To obtain a formula involving a special value of a complete $L$-function, it is necessary to introduce some extra data: we choose an open compact subgroup $K^\infty \subset Sp_4(\mathbb{A}_F^\infty)$ and an irreducible representation $\sigma^\infty$ of $K^\infty$. For vectors $w \in \sigma^\infty$ and $w^\vee \in (\sigma^\infty)^\vee$, we denote by $\varphi_{w^\vee,w}$ the Schwartz form in $\mathcal{S}(V(\mathbb{A}_F^\infty)^2)$ obtained by integrating $\varphi$ against the matrix coefficient of $(\sigma^\infty)^\vee$ determined by $w$ and $w^\vee$.

\begin{theorem} \label{thm:Rallis_pairing_simplified} Let $\pi \subset \mathcal{A}_0(Sp_{4,F})$ be a cuspidal automorphic representation of $Sp_4(\mathbb{A}_F)$ and $f \in \pi$ be a cusp form. Let $\sigma^\infty$ be an irreducible representation of an open compact subgroup $K^\infty$ of $Sp_4(\mathbb{A}_F^\infty)$ and let $\sigma$ be the representation of $K=K^\infty \times U(2)^d$ given by \eqref{eq:def_sigma}. Assume that the hypotheses in \ref{hyp:main_thm_pairing} hold for $f$, $\sigma^\infty$ and $\pi$ and for the Schwartz forms $\tilde{\varphi}$, $\varphi$. Choose a $K$-invariant embedding $\iota_\pi=\otimes_v \iota_{\pi,v}: \sigma \hookrightarrow \pi$. Let $w=\otimes_{v \nmid \infty} w_v \in \sigma^\infty$, $w^\vee=\otimes_{v \nmid \infty} w_v^\vee \in (\sigma^\infty)^\vee$ and $f^\vee=\otimes_v f_v^\vee \in \pi^\vee$ such that $(\iota_{\pi,v}(w_v),f_v^\vee)=1$ for all $v$. Then there is a positive constant $C$, depending only on the quadratic space $V$, such that
\begin{equation*}
\begin{split}
([\tilde{\Phi}(T,\tilde{\varphi}_{w^\vee,w},s')],\overline{\theta(f,\varphi \otimes \varphi_\infty)}) & =C \cdot (\widetilde{\mathcal{M}}_T(s'),f_{\iota_\pi(w \otimes w_\infty)})^{reg} \\
& \quad \cdot \left. \frac{\Lambda(\pi^\vee,\chi_V,s+1/2)}{d(\chi_V,s)} \right|_{s=s_0} \cdot \prod_v Z^0_v(f^\vee_v,f_v,\delta_*\Phi_{w_v^\vee,w_v},\chi_{V,v},s_0),
\end{split}
\end{equation*}
where $s_0=(n-3)/2$ and for all but finitely many $v$ we have $Z^0_v(f^\vee_v,f_v,\delta_*\Phi_{w_v^\vee,w_v},\chi_{V,v},s_0)=1$.
\end{theorem}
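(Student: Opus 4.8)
The plan is to unfold both sides of the pairing against the Weil representation and reduce everything to a Rallis-type see-saw computation, following the strategy of Piatetskii-Shapiro and Rallis for the standard $L$-function of $Sp_4$. First I would write out the left-hand side by expanding $[\tilde\Phi(T,\tilde\varphi_{w^\vee,w},s')]$ as the regularized theta integral constructed in \cite{GarciaRegularizedLiftsI}, paired with the theta lift $\theta(f,\varphi\otimes\varphi_\infty)$. This produces an iterated integral over $[Sp_4]\times[Sp_4]$ (one copy from each theta lift) against $\overline{f(g)}$ and a product of two theta kernels for the dual pair $(Sp_4,O(V))$; the two quadratic-side theta series combine (via the standard identity $\theta_V\otimes\theta_V$ restricted along the diagonal $O(V)\hookrightarrow O(V)\times O(V)$) into a single theta kernel for the dual pair $(Sp_4\times Sp_4, O(V))$, equivalently for $(Sp_8, O(V))$ after embedding $Sp_4\times Sp_4\hookrightarrow Sp_8$. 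The $T$-th Fourier coefficient singles out a specific piece of this kernel, which is exactly where $\widetilde{\mathcal M}_T(s')$ and the regularized period $(\widetilde{\mathcal M}_T(s'),f_{\iota_\pi(w\otimes w_\infty)})^{reg}$ enter.

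Next I would apply the Siegel--Weil formula to the inner $O(V)$-integral. After integrating out $O(V)(F)\backslash O(V)(\mathbb A_F)$, the Siegel--Weil formula identifies the resulting integral of the theta kernel with a special value (or residue) of a Siegel--Eisenstein series on the doubled group $Sp_8(\mathbb A_F)$, induced from the Siegel parabolic with the character $\chi_V$. Restricting this Eisenstein series back to $Sp_4\times Sp_4$ and carrying out the remaining integration against $\overline{f(g_1)}$ and the cuspidal vector in the second variable, the basic identity of Piatetskii-Shapiro--Rallis converts the doubling integral into the standard $L$-function: the global part yields $\Lambda(\pi^\vee,\chi_V,s+1/2)$, while the bad and archimedean places contribute the local doubling zeta integrals $Z^0_v(f_v^\vee,f_v,\delta_*\Phi_{w_v^\vee,w_v},\chi_{V,v},s_0)$. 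The normalizing factor $d(\chi_V,s)$ is the product of local normalizations of the intertwining/Eisenstein data, and the specialization $s=s_0=(n-3)/2$ records the point at which the Weil representation forces the induction parameter (the ``near central'' point, reflecting signature $(n,2)$ so that the doubled space has dimension $n+2$). The factorization of the Schwartz data $\tilde\varphi=\tilde\varphi^\infty\otimes\tilde\varphi_\infty$ and $\varphi=\varphi^\infty\otimes\varphi_\infty$, together with the $K$-type bookkeeping via $\sigma^\infty$, $w$, $w^\vee$ and $\iota_\pi$, lets one split off the finite places cleanly and produces the stated product over $v$, with $Z^0_v=1$ at the unramified places by the standard unramified computation (Piatetskii-Shapiro--Rallis, Li).

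The hypotheses in \ref{hyp:main_thm_pairing} will be used precisely to guarantee: (i) absolute convergence of the various unfoldings, so that the interchange of integration leading to the see-saw identity is legitimate; (ii) non-degeneracy of the relevant local zeta integrals and matrix coefficients, so the normalization $(\iota_{\pi,v}(w_v),f_v^\vee)=1$ is achievable and the archimedean integral $Z^0_\infty$ is the expected one; and (iii) that we are on the branch of the Siegel--Weil formula (first term versus second term / regularized) appropriate to the signature of $V$ and to the position of $s_0$. The main obstacle I anticipate is the justification of the Siegel--Weil step in the \emph{regularized} setting: the current $[\tilde\Phi(T,\tilde\varphi,s)]$ is itself defined by a regularized theta integral, so applying Siegel--Weil requires matching the regularization used to construct the current with the (Kudla--Rallis) regularization of the Siegel--Weil integral, controlling the resulting poles in $s$, and checking that the pairing with the rapidly-decreasing form $\theta(f,\varphi\otimes\varphi_\infty)$ kills the potentially divergent or non-holomorphic contributions. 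Handling the archimedean place — where $\tilde\varphi_\infty$, $\varphi_\infty$ are the distinguished Schwartz forms valued in $\mathcal A^{n-1,n-1}(\mathbb D)$ rather than Gaussians — and identifying the archimedean doubling integral with a concrete special value will be the other delicate point; here the explicit form of $\delta_*\Phi$ and the computations in \autoref{subsection:theta_lifts_diff_forms} are what make the identification possible.
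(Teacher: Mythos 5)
Your strategy matches the paper's proof (Section \ref{subsection:proof_thm_Rallis_pairing}): unfold the pairing against the regularized kernel, combine the two theta kernels via the seesaw into a kernel for $(Sp_8, O(V))$ restricted along $\iota:Sp_4\times Sp_4\hookrightarrow Sp_8$, apply the Siegel--Weil formula, and use the Euler factorization of Proposition \ref{proposition:Euler_product} to identify the doubling integral with the $L$-value times local zeta factors. But the main obstacle you anticipate does not arise: $V$ is assumed anisotropic over $F$ from the start of Section \ref{section:Pairing_differential forms}, so Theorem \ref{thm:SiegelWeil} is the \emph{convergent} Siegel--Weil formula of Kudla--Rallis, with no regularization on the orthogonal integral at all. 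The $s'$-regularization defining $[\tilde\Phi(T,\tilde\varphi,s')]$ is entirely on the $Sp_4$ side --- it is the regularized pairing $(\widetilde{\mathcal{M}}_T(s'),\cdot)^{\mathrm{reg}}$ over $A(\mathbb R)^0 \times N(F)\backslash N(\mathbb A)$ --- and it decouples cleanly from the Siegel--Weil step. Relatedly, $\widetilde{\mathcal{M}}_T$ enters through the definition of the current via \cite[Prop.~3.27]{GarciaRegularizedLiftsI} \emph{before} any seesaw, not by extracting a $T$-th Fourier coefficient of the $Sp_8$ kernel.

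Two further points you should work out. First, the passage from the $X_{K'}$-integral (over a GSpin Shimura variety) to an integral over $O(V)(F)\backslash O(V)(\mathbb A_F)$ --- the form to which Siegel--Weil applies --- is nontrivial: it goes through the identification \eqref{eq:isom_volume_forms_aut_forms} of top-degree forms with automorphic functions on $H$, descends to $SO(V)$, and then uses the $C(\mathbb A)$-invariance of the Schwartz datum (Hypothesis (5) together with Lemma \ref{lemma:C_R_invariance}) to pass from $SO(V)$ to the full orthogonal group. Second, because the doubling integral here unfolds to a \emph{non-unique} model, one does not automatically get an Euler product over all places: the multiplicity-one Hypothesis (3), plus the $K$-type projections of the sections and of the Schwartz forms, is exactly what makes the normalized local functionals $Z^0_{\iota_{\pi,v}(w_v)}$ well-defined at every $v$ (Lemma \ref{lemma:int_independence}, generalizing \cite[Lemma~1]{PSRNew}). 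Without this, one only obtains a partial Euler product over unramified places.
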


In this statement, the pairing in the left hand side is defined by \eqref{eq:def_pairing_limit}, $\Lambda(\pi^\vee,\chi_V,s)$ denotes the complete $L$-function defined by the doubling method, $d(\chi_V,s)$ is an explicit product of completed degree $1$ $L$-functions given by \eqref{eq:global_d_computation}, the factors $Z^0_v(\cdot)$ are normalized local zeta integrals on $Sp_4(F_v)$ and $(\widetilde{\mathcal{M}}_T(s'),f)^{reg}$ is the regularized integral defined by \eqref{eq:def_theta_lift} with $f_{\iota_\pi(w \otimes w_\infty)}$ the cusp form in $\pi$ corresponding to $\iota_\pi(w \otimes w_\infty)$. The right hand side does not depend on the choice of embedding $\iota_\pi$.

Theorem \ref*{thm:Rallis_pairing_simplified} also yields information about the values of the currents $[\Phi(T,\varphi)]$ on closed forms. Namely, we show in \autoref{subsection:reps_and_cohomology} that the hypotheses in \ref{hyp:main_thm_pairing} imply that the form $\theta(f,\varphi \otimes \varphi_\infty)$ is closed. Assuming that $\tilde{\varphi}_{w^\vee,w}^\iota=\tilde{\varphi}_{w^\vee,w}$, the results in \cite{GarciaRegularizedLiftsI} imply that
\begin{equation*}
\begin{split}
([\Phi(T,\tilde{\varphi}_{w^\vee,w})]-[\Phi(T^\iota,\tilde{\varphi}_{w^\vee,w}^\iota)],\overline{\theta(f,\varphi\otimes \varphi_\infty)})&=C \cdot \left. \frac{\Lambda(\pi^\vee,\chi_V,s+1/2)}{d(\chi_V,s)}  \right|_{s=s_0} \\
& \quad \cdot (I(T,\tilde{\varphi}_{w^\vee,w};f,\varphi)-I(T^\iota,\tilde{\varphi}_{w^\vee,w};f,\varphi)),
\end{split}
\end{equation*}
where $I(T,\tilde{\varphi}_{w^\vee,w};f,\varphi)$ is defined in \eqref{eq:def_Beil_period}; see \autoref{cor:thm_Rallis_pairing}.

Let us consider an example that provides motivation for the appearance of the group $Sp_4$ in the results discussed above. Namely, consider the case 
\begin{equation}
X_K=X_0^B \times X_0^B,
\end{equation}
where $X_0^B$ is the full level Shimura curve attached to a non-split, indefinite quaternion algebra $B$ over $\mathbb{Q}$. Then the cohomology group $H^{1,1}(X_K)$ is naturally a module over the full level Hecke algebra of $PB(\mathbb{A}_f)^\times \times PB(\mathbb{A}_f)^\times$. The Matsushima formula shows that this module splits as a direct sum of irreducible representations, indexed by pairs of automorphic representations $(\pi_1,\pi_2)$ of $PB(\mathbb{A})^\times$ (with appropriate conductor and archimedean components); denote by $H^{1,1}(X_K)[\pi_1,\pi_2]$ the summand of $H^{1,1}(X_K)$ corresponding to such a pair.

The behaviour of $H^{1,1}(X_K)[\pi_1,\pi_2]$ with respect to algebraic cycles is then conjecturally controlled by the Rankin-Selberg $L$-function $L(\pi_1 \times \pi_2,s)$. It is known that there is a dichotomy in the behaviour of this $L$-function: namely, it has a (simple) pole at $s=1$ if and only if $\pi_1 \cong \pi_2^\vee$. If such a pole exists, then the Tate conjecture predicts the existence of an element of $CH^{1}(X_K)$ whose cohomology class projects non-trivially to $H^{1,1}(X_K)[\pi_1,\pi_2]$; in fact, the diagonal $X_0^B \rightarrow (X_0^B)^2$ has this property. In the absence of such a pole, i.e. when $\pi_1 \ncong \pi_2^\vee$, a conjecture of Beilinson predicts the existence of a higher Chow cycle $Z \in CH^2(X,1)_\mathbb{Z}$ (see \cite[\textsection 3.9]{GarciaRegularizedLiftsI} for definitions) whose regulator has a non-trivial $H^{1,1}(X_K)[\pi_1,\pi_2]$-component (and a relation with $L(\pi_1 \times \pi_2,0)$). This conjecture is known: for $B=M_2(\mathbb{Q})$, it was confirmed in Beilinson's original paper \cite{Beilinson}; for $B$ non-split, a proof was given in \cite{RamakrishanShimuraCurves}. Note however that the proof in the latter case does not give any information about the cycle $Z$, and in fact \cite{RamakrishnanAnnArbor} asks if it is possible to use meromorphic functions whose divisor is supported on CM points to construct interesting higher Chow cycles.

The dichotomy described in the previous paragraph is related to the behaviour of $\pi_{1,2}:=\pi_1 \boxtimes \pi_2$ under the theta correspondence. Namely, one can consider $\pi_{1,2}$ as a representation of a general orthogonal group $GSO(V)$. For a cuspidal automorphic representation $\pi \subset \mathcal{A}_0(GSp_{2r})$, denote by $\theta(\pi) \subset \mathcal{A}(GSO(V))$ (the restriction to $GSO(V)(\mathbb{A})$ of) its global theta lift. It turns out that for our representations $\pi_1,\pi_2$ we have:
\begin{equation}
\begin{split}
\pi_1 \cong \pi_2^\vee & \Leftrightarrow \pi_{1,2} \subset \theta(\pi), \quad \pi \subset \mathcal{A}_0(GL_2) \\
\pi_1 \ncong \pi_2^\vee & \Leftrightarrow \pi_{1,2} \subset \theta(\pi), \quad \pi \subset \mathcal{A}_0(GSp_4).
\end{split}
\end{equation}
(The first result is due to Shimizu \cite{Shimizu}, while the second one is due to Roberts \cite{RobertsGlobalLpackets}.) Our second main result will be a more precise version of Theorem 1.1 in this context. Namely, fix automorphic representations $\pi_1$ and $\pi_2$ of $PB^\times$ corresponding to holomorphic forms of weight $2$ and full level on the Shimura curve $X_0^B$. Assume that $\pi_1 \ncong \pi_2^\vee$ and denote by $\Lambda(\pi_1 \times \pi_2,s)$ the complete Rankin-Selberg $L$-function attached to $\pi_1$ and $\pi_2$; note that under this assumption this $L$-function is regular at $s=0$.  Let $\Pi \subset \mathcal{A}_0(GSp_4)$ be the automorphic representation of $GSp_4(\mathbb{A})$ such that $\pi_{1,2} \subset \theta(\Pi)$ as above. In \autoref{section:Example_Products_Shimura_curves}, we will specify a newform $f \in \Pi$ and a Schwartz function $\varphi \in \mathcal{S}(V(\mathbb{A}_f^2))$, and we will prove the following result.

\begin{theorem} \label{thm:main_thm_shimura_curves}
Let $f \in \Pi$ be given by \eqref{eq:def_f} and $\varphi=\otimes_{v \nmid \infty } \varphi_v \in \mathcal{S}(V(\mathbb{A}^\infty)^2)$ with $\varphi_v$ as in \eqref{eq:def_varphi_example}. Then the form $\theta(f,\varphi \otimes \varphi_\infty) \in \mathcal{A}^{1,1}(X_K)$ is not identically zero. For any $T \in Sym_2(\mathbb{Q})_{>0}$, we have
\begin{equation*}
\begin{split}
([\tilde{\Phi}(T,\varphi,s')],\overline{\theta(f,\varphi \otimes \varphi_\infty)}) & =C \cdot (\widetilde{\mathcal{M}}_T(s'),f)^{reg} \\
& \quad \cdot \Lambda(\pi_1 \times \pi_2,0) \cdot \prod_v Z^0_v(f^\vee_v,f_v,\delta_*\Phi,1,-1/2),
\end{split}
\end{equation*}
where $C$ is a positive constant depending only on $B$, the zeta integrals $Z^0_v(f^\vee_v,f_v,\delta_*\Phi,1,-1/2)$ are non-vanishing and for all but finitely many $v$ we have $Z^0_v(f^\vee_v,f_v,\delta_*\Phi,1,-1/2)=1$.
\end{theorem}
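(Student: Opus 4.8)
The plan is to obtain \autoref{thm:main_thm_shimura_curves} by specializing \autoref{thm:Rallis_pairing_simplified} to the data of \autoref{section:Example_Products_Shimura_curves}: the quadratic space $V=B$ with its reduced norm form, the representation $\Pi$ (restricted to $Sp_4(\mathbb{A})$, i.e. passing to a suitable irreducible summand of $\Pi|_{Sp_4(\mathbb{A})}$), the newform $f$ of \eqref{eq:def_f}, and the Schwartz function $\varphi$ of \eqref{eq:def_varphi_example} together with the archimedean Schwartz form $\varphi_\infty$ attached to $\Pi_\infty$. Since $B$ is an indefinite quaternion algebra over $\mathbb{Q}$, the space $V$ has signature $(2,2)$ at the archimedean place and square discriminant, so in the notation of \autoref{thm:Rallis_pairing_simplified} we have $n=2$, $s_0=(n-3)/2=-1/2$ and $\chi_V=1$; this accounts for the disappearance of the quadratic character and for the zeta integrals being evaluated at $-1/2$. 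The first thing to check is that the hypotheses in \ref{hyp:main_thm_pairing} are satisfied by this choice — in particular that $\Pi_\infty$ together with $\varphi_\infty$ is of the type required for the theta lift to land in $\mathcal{A}^{1,1}(X_K)$ — which will follow from the explicit description of $\Pi_\infty$ as the archimedean theta lift of a pair of weight-$2$ discrete series.

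The main step, and the one I expect to be the principal obstacle, is the identification of the special $L$-value in \autoref{thm:Rallis_pairing_simplified} with the completed Rankin--Selberg value $\Lambda(\pi_1\times\pi_2,0)$. By the main theorem of Roberts \cite{RobertsGlobalLpackets}, the hypothesis $\pi_1\ncong\pi_2^\vee$ guarantees that $\Pi$ is the cuspidal theta lift to $GSp_4$ of $\pi_{1,2}=\pi_1\boxtimes\pi_2$ on $GSO(B)$, so that the $L$-parameter of $\Pi$ is $\phi_{\pi_1}\oplus\phi_{\pi_2}$; computing the exterior square of the spin representation then yields the standard factorization $L(s,\Pi,std)=\zeta(s)\,L(s,\pi_1\times\pi_2)$ of the degree-$5$ standard $L$-function (the common central character of $\pi_1,\pi_2$ being trivial, as follows from the full-level hypothesis). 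Since $\chi_V=1$, the complete doubling $L$-function $\Lambda(\pi^\vee,\chi_V,s+1/2)$ of \autoref{thm:Rallis_pairing_simplified} is the completed version of this degree-$5$ $L$-function, and the normalizing factor $d(\chi_V,s)$ of \eqref{eq:global_d_computation}, being a product of completed degree-$1$ factors, is precisely what is needed to cancel the completed zeta factor (poles included) together with the auxiliary zeta factors intrinsic to the doubling normalization; hence at $s=s_0=-1/2$ the ratio $\Lambda(\pi^\vee,\chi_V,s+1/2)/d(\chi_V,s)$ equals $\Lambda(\pi_1\times\pi_2,0)$. Carrying this out rigorously requires making the archimedean and ramified local factors in \autoref{thm:Rallis_pairing_simplified} completely explicit for the non-generic, Yoshida-type representation $\Pi$, and reconciling the doubling normalization for $Sp_4$ with the analytic normalization of the Rankin--Selberg $L$-function, under which $s=0$ is the left edge of the critical strip.

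It remains to evaluate the local zeta integrals and to establish the non-vanishing of the theta lift. For the places $v$ at which all the data are unramified, the normalization in \autoref{thm:Rallis_pairing_simplified} already gives $Z^0_v(f^\vee_v,f_v,\delta_*\Phi,1,-1/2)=1$; for the finitely many remaining places — those where $B$ ramifies and those dividing the levels of $\pi_1$ and $\pi_2$ — one computes these integrals directly for the explicit local vectors and Schwartz functions of \eqref{eq:def_f} and \eqref{eq:def_varphi_example}, using the explicit local theta correspondence for the dual pair and the local doubling theory, and checks that each is a nonzero real number, so that $\prod_v Z^0_v(f^\vee_v,f_v,\delta_*\Phi,1,-1/2)\neq 0$. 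Finally, to see that $\theta(f,\varphi\otimes\varphi_\infty)$ does not vanish identically, I would apply the classical (absolutely convergent) Rallis inner product formula of \cite{Rallis1982} for the dual pair $(Sp_4,O(B))$: the Petersson norm of the corresponding automorphic lift is a nonzero multiple of the same standard $L$-value times local doubling integrals, the latter differing from the $Z^0_v$ above only by nonzero constants, and the former being nonzero because $L(s,\pi_1\times\pi_2)$ does not vanish at the edge of its critical strip, so that by the functional equation and the absence of a pole at $s=0$ (as $\pi_1\ncong\pi_2^\vee$) the completed value $\Lambda(\pi_1\times\pi_2,0)$ is nonzero. Granting all of this, substituting the $L$-value and the product of local zeta integrals into \autoref{thm:Rallis_pairing_simplified} yields the displayed identity, and the non-vanishing just proved completes the argument; alternatively, the non-vanishing of the form also follows a posteriori from that identity, since the regularized period $(\widetilde{\mathcal{M}}_T(s'),f)^{reg}$ is nonzero for a suitable choice of $T$.
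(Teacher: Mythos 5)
Your overall strategy---specialize \autoref{thm:Rallis_pairing_simplified} to $n=2$, $\chi_V=1$, $s_0=-1/2$ and identify $\Lambda(\pi^\vee,\chi_V,s+1/2)/d(\chi_V,s)$ at $s_0$ with a nonzero multiple of $\Lambda(\pi_1\times\pi_2,0)$ via the Langlands parameter $\phi_\Pi=\phi_{\pi_1}\oplus\phi_{\pi_2}$ (Lemma~\ref{lemma:LL_param}) and the factorization $L(std\circ\phi_\Pi,s)=\zeta_v(s)L(\pi_{1,v}\times\pi_{2,v},s)$---is the right one and matches what the paper does for the $L$-function part.

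The genuine gap is in your treatment of the non-vanishing statements. You propose to (i) directly compute the local doubling zeta integrals $Z^0_v$ at the ramified and archimedean places, and then (ii) feed those non-vanishing statements into the classical Rallis inner product formula to deduce $\theta(f,\varphi\otimes\varphi_\infty)\neq 0$. Step (i) is precisely what the paper identifies as the hard point and deliberately avoids: the archimedean doubling integral for the non-tempered, non-generic representation $\pi_{\infty,1}$ paired with the explicit (Kudla--Millson type) Schwartz form is not something one can simply ``compute directly''; there is no reference for it and it would be a substantial calculation in its own right. Your fallback---deduce non-vanishing of the form \emph{a posteriori} from the theorem's identity plus non-vanishing of the regularized period---is circular, since non-vanishing of the right-hand side of the identity requires exactly the local non-vanishing you are trying to establish.

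The paper's actual proof runs in the opposite logical direction. It first proves $\theta(f,\varphi)\neq 0$ by a seesaw argument: via the embedding $\iota_0:SL_2\times SL_2\hookrightarrow Sp_4$, the pairing of $F=\theta(f_{1,2},\varphi)$ against a pair of $GL_2$ cusp forms $(s_1,s_2)$ unfolds into a product of two integrals over the Shimura curve $X_{B,K}$, which are evaluated explicitly using Watson's explicit theta lifting formula for $(GL_2,GO(B))$ (Proposition~\ref{prop:Watson}); non-vanishing is then concluded from the genus-$\geq 2$ geometry of $X_{B,K}$ (Proposition~\ref{proposition:non_vanishing}). A separate short lemma relating $\varphi_{KM}$ to $\varphi^{(2)}$ upgrades this to non-vanishing of the $(1,1)$-form $\theta(f,\varphi\otimes\varphi_\infty)$. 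Only \emph{after} this is established does the paper invoke the Rallis inner product formula, and in the reverse direction: since the Petersson norm of the lift is a nonzero $L$-value times the product $\prod_v Z^0_v$, non-vanishing of each $Z^0_v$ follows for free. So the global seesaw argument is what lets one get away without ever computing a single nontrivial local doubling integral; this is the idea your proposal is missing.
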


\subsection{Outline of the paper} We now describe the contents of each section in more detail. \autoref{section:Pairing_differential forms} is devoted to the statement and proof of \autoref{thm:Rallis_pairing}. We start by recalling some basic facts about the Weil representation in Section \ref{subsection:Weil_representation}. In Section \ref{subsection:Local_zeta_integrals}, after reviewing the basic theory of local doubling zeta integrals for the group $Sp_{2r}$, we generalize some  of the results in \cite{PSRNew} on unramified representations to allow for ramified ones under a certain multiplicity one hypothesis. After stating the case of the Siegel-Weil formula that we need in Section \ref{subsection:Siegel_Weil_formla}, we prove in Section \ref{subsection:Euler_factorization} a result providing a factorization of a certain global integral into an Euler product. Section \ref{subsection:reps_and_cohomology} discusses some facts about the behaviour of representations with non-zero cohomology under the theta correspondence for real groups. Next, in Section \ref{subsection:K_infty_type_varphi}, we review some of the definitions and results in Part I and we prove some technical results about the behaviour of the Schwartz function $\tilde{\varphi}_\infty$ under the maximal compact subgroup $K_\infty$ of $Sp_4(\mathbb{R})$. Section \ref{subsection:theta_lifts_diff_forms} reviews the definition of theta lifts valued in differential forms.  With the results of Sections \ref{subsection:Local_zeta_integrals}-\ref{subsection:theta_lifts_diff_forms} as the main input, we state \autoref{thm:Rallis_pairing} in Section \ref{subsection:pairing_currents_forms} and give its proof in Section \ref{subsection:proof_thm_Rallis_pairing}.

The example of a product of Shimura curves described above is considered in \autoref{section:Example_Products_Shimura_curves}. The main goal is the proof of \autoref{thm:main_thm_shimura_curves}. We review some known results on the theta correspondence for the dual pair $(GSp_4,GO(V))$ in Section \ref{subsection:theta_corr_GSp4}. Once these results have been recalled, Section \ref{subsection:local_data} explains how to choose local data (local Schwartz functions and local vectors in certain automorphic representations of $GSp_4(\mathbb{A}_F)$) such that the global theta lift does not vanish and the hypotheses for Theorem \ref{thm:Rallis_pairing} are satisfied, leading to the proof of Theorem \ref{thm:main_thm_shimura_curves}. 

\subsection{Notation} \label{subsection:notation} The following conventions will be used throughout the paper.

\begin{itemize}
\item We write $\hat{\mathbb{Z}}=\varprojlim_{n}(\mathbb{Z}/n\mathbb{Z})$ and $\hat{M}=M \otimes_\mathbb{Z} \hat{\mathbb{Z}}$ for any abelian group $M$. We write $\mathbb{A}_f=\mathbb{Q} \otimes_\mathbb{Z} \hat{\mathbb{Z}}$ for the finite adeles of $\mathbb{Q}$ and $\mathbb{A}=\mathbb{A}_f \times \mathbb{R}$ for the full ring of adeles. 

\item For a number field $F$, we write $\mathbb{A}_F= F \otimes_\mathbb{Q} \mathbb{A}$, $\mathbb{A}_{F,f}=F \otimes_\mathbb{Q} \mathbb{A}_f$ and $F_\infty=F \otimes_\mathbb{Q} \mathbb{R}$. We will suppress $F$ from the notation if no ambiguity can arise.

\item For a finite set of places $S$ of $F$, we will denote by $\mathbb{A}_S$ (resp. $\mathbb{A}^S$) the subset of adeles in $\mathbb{A}_F$ supported on $S$ (resp. away from $S$).

\item We denote by $\psi_\mathbb{Q}=\otimes_v \psi_{\mathbb{Q}_v}:\mathbb{Q} \backslash \mathbb{A}_\mathbb{Q} \rightarrow \mathbb{C}^\times$ the standard additive character of $\mathbb{A}_\mathbb{Q}$, defined by
\begin{equation*}
\begin{split}
\psi_{\mathbb{Q}_p}(x)&=e^{-2\pi i x}, \text{ for } x \in \mathbb{Z}[p^{-1}]; \\ 
\psi_{\mathbb{R}}(x)&=e^{2\pi i x}, \text{ for } x \in \mathbb{R}.
\end{split}
\end{equation*}
If $F_v$ is a finite extension of $\mathbb{Q}_v$, we set $\psi_v=\psi_{\mathbb{Q}_v}(tr(x))$, where $tr:F_v \rightarrow \mathbb{Q}_v$ is the trace map. For a number field $F$, we write $\psi=\otimes_v \psi_v: F \backslash \mathbb{A}_F \rightarrow \mathbb{C}^\times$ for the resulting additive character of $\mathbb{A}_F$.

\item For a locally compact, totally disconnected topological space $X$, the symbol $\mathcal{S}(X)$ denotes the Schwartz space of locally constant, compactly supported functions on $X$. For $X$ a finite dimensional vector space over $\mathbb{R}$, the symbol $\mathcal{S}(X)$ denotes the Schwartz space of all $\mathcal{C}^\infty$ functions on $X$ all whose derivatives are rapidly decreasing.

\item For a ring $R$, we denote by $Mat_n(R)$ the set of all $n$-by-$n$ matrices with entries in $R$. The symbol $1_n$ (resp. $0_n$) denotes the identity (resp. zero) matrix in $Mat_n(R)$.

\item For a matrix $x \in Mat_n(R)$, the symbol $^t x$ denotes the transpose of $x$. We denote by $Sym_n(R)=\{x \in Mat_n(R)|x=^t x \}$ the set of all symmetric matrices in $Mat_n(R)$.

\item The symbol $X \coprod Y$ denotes the disjoint union of $X$ and $Y$.

\item If an object $\phi(s)$ depends on a complex parameter $s$ and is meromorphic in $s$, we denote by $CT_{s=s_0}\phi(s)$ the constant term of its Laurent expansion at $s=s_0$.

\end{itemize}

\subsection{Acknowledgments} Most of the work on this paper was done during my Ph.D. at Columbia University, and this work is part of my Ph.D. thesis. I would like to express my deep gratitude to my advisor Shou-Wu Zhang, for introducing me to this area of mathematics, for his guidance and encouragement and for many very helpful suggestions. I would also like to thank Stephen S. Kudla for answering my questions about the theta correspondence and for several very inspiring remarks and conversations. This paper has also benefitted from comments and discussions with Patrick Gallagher, Yifeng Liu, Andr\'e Neves, Ambrus P\'al, Yiannis Sakellaridis and Wei Zhang; I am grateful to all of them.

\section{Pairing currents and forms} \label{section:Pairing_differential forms}
The main goal of this section is to compute the values of the currents $[\tilde{\Phi}(T,\varphi,s)]$ and $[\Phi(T,\varphi)]-[\Phi(T^\iota,\varphi^\iota)]$ introduced in \cite{GarciaRegularizedLiftsI} on differential forms $\alpha$ arising as theta lifts from cusp forms on $Sp_4(\mathbb{A}_F)$. This is the content of \autoref{thm:Rallis_pairing} and \autoref{cor:thm_Rallis_pairing}, which prove a formula for these values that is analogous to the classical Rallis inner product formula for Petersson norms of theta lifts. Underlying Theorem \ref*{thm:Rallis_pairing} is a factorization of a certain global functional into a product of local ones; these functionals are defined in \autoref{subsection:Euler_factorization} and the Euler product factorization is the content of \autoref{proposition:Euler_product}. We will only need this factorization for the symplectic group of rank $2$, but we have chosen to prove it for general symplectic groups since the proof is essentially the same. The proof of \autoref{thm:Rallis_pairing} will be given in \autoref{subsection:proof_thm_Rallis_pairing}; the other subsections recall the necessary background needed for the statement and proof.

Recall that the space $V$ is assumed to have even dimension over $F$. From now on, we will also assume that $V$ is anisotropic over $F$, so that $X_K$ will be compact. This is always true if $F \neq \mathbb{Q}$ due to our assumption on the signature of $V$.

\subsection{Weil representation} \label{subsection:Weil_representation} Let $k$ be a local field of characteristic $0$ and let $(V,q)$ be a non-degenerate quadratic vector over $k$ of even dimension $m$. Denote by $\det(V)$ the element of $k^\times/(k^{\times})^2$ determined by $\det((v_i,v_j))$ for any $k$-basis $\{v_1,\ldots,v_{m}\}$ of $V$. Let $\chi_V$ be the quadratic character defined by
\begin{equation}
\chi_V(a)=(\det(a),(-1)^\frac{m}{2}\det(V))
\end{equation}
where $a \in GL_n(k)$ and $(\cdot,\cdot)$ denotes the Hilbert symbol.

Consider the space of Schwartz functions $\mathcal{S}(V^r)$ for $r \geq 1$ (see \autoref{subsection:notation}). It carries an action $\omega=\omega_\psi$ of the group $Sp_{2r}(k) \times O(V)$ known as the Weil representation. It depends on an additive character $\psi:k \rightarrow \mathbb{C}^\times$. For $\varphi \in \mathcal{S}(V^r)$ and $h \in O(V)$, this action is given by
\begin{equation}
\omega(1,h)\varphi(v)=\varphi(h^{-1}v).
\end{equation}
The action of $Sp_{2r}(k)$ is determined by the following expressions:
\begin{equation}
\begin{split}
\omega\left( \left(\begin{array}{cc} a & \\ & {}^t a^{-1} \end{array}\right),1 \right)\varphi(v) &=\chi_V(a) |\det(a)|^{\frac{m}{2}}\varphi(v \cdot a), \ \ \ a \in GL_r(k), \\
\omega\left( \left(\begin{array}{cc} 1_r & b \\ & 1_r \end{array}\right),1 \right)\varphi(v) &=\psi(tr(bq(v))) \varphi(v), \ \ \ b \in Sym_r(k), \\
\omega\left( \left(\begin{array}{cc}  & -1_r \\ 1_r &  \end{array}\right),1 \right)\varphi(v) &=\gamma_V^{-r} \int_{V^r}\varphi(w)\psi(-tr(v,w))dw.
\end{split}
\end{equation}
Here $dw$ is the self-dual Haar measure on $V^r$ with respect to $\psi$, and $\gamma_V$ is an explicit $8$-th root of unity (see e.g. \cite[Lemma A.1]{IchinoPullbacks} for its explicit form).

\subsection{Local zeta integrals for $Sp_{2r}$} \label{subsection:Local_zeta_integrals} \label{subsection:local_doubling_integrals} We review some well known facts about the local zeta integrals appearing in the doubling method for symplectic groups.

Let $(W,(\cdot,\cdot))$ be a symplectic vector space of dimension $2r$ over a local field $F$ and endow $\tilde{W}=W \oplus W$ with the symplectic form $(\cdot,\cdot) \oplus -(\cdot,\cdot)$. Let $G=Sp(W)$, $\tilde{G}=Sp(\tilde{W})$ and $\tilde{K}$ be a maximal compact subgroup of $\tilde{G}$. There is a natural map
\begin{equation}
\iota: G \times G \rightarrow \tilde{G}.
\end{equation}
This embedding can be described explicitly as follows. Denote by $Sp_{2r}$ the symplectic group of rank $r$ defined by
\begin{equation} \label{eq:def_Sp_2r}
Sp_{2r}=\{g \in GL_{2r} | {}^tg J g =J  \},
\end{equation}
where $J=\left( \begin{smallmatrix}  & -1_r \\ 1_r &  \end{smallmatrix} \right)$. Consider the embedding $\iota_0:Sp_{2r} \times Sp_{2r} \rightarrow Sp_{4r}$ given by
\begin{equation} \label{eq:def_iota_0}
\iota_0 \left(\begin{pmatrix}a & b \\ c & d \end{pmatrix},\begin{pmatrix}a' & b' \\ c' & d' \end{pmatrix}\right)=\begin{pmatrix}a & & b & \\  & a' & & b' \\ c & & d & \\ & c' & & d' \end{pmatrix}.
\end{equation}
Then we can choose a basis of $W$ such that $\iota$ is identified with
\begin{equation} \label{eq:def_iota}
\iota(g_1,g_2)=\iota_0\left(g_1,\begin{pmatrix}1_r &  \\  & -1_r \end{pmatrix}g_2 \begin{pmatrix}1_r &  \\  & -1_r \end{pmatrix}\right).
\end{equation} 

The subspace $W^d=\{(w,w)|w \in W \}$ is maximal isotropic in $\tilde{W}$; denote by $\tilde{P}$ its stabilizer in $\tilde{G}$. Then $\tilde{P}$ is a maximal parabolic subgroup of $\tilde{G}$ and $(G \times G) \cap \tilde{P} = \{ (g,g) | g \in G \}$. The assignment $p \mapsto \det(p_{W^d})$ defines a homomorphism $X:\tilde{P} \rightarrow F^\times$. For a quasi-character $\omega: F^\times \rightarrow \mathbb{C}^\times$ and $s \in \mathbb{C}$ we obtain a character $\omega |\cdot|^s: \tilde{P} \rightarrow \mathbb{C}^\times$ sending $p$ to $\omega(X(p)) |X(p)|^s$. Denote by $Ind_{\tilde{P}}^{\tilde{G}}(\omega |\cdot|^s)$ the normalized induction: elements of $Ind_{\tilde{P}}^{\tilde{G}}(\omega |\cdot|^s)$ are functions $f(g,s)$ on $\tilde{G}$ that satisfy
\begin{equation}
f(pg,s)=\omega(X(p)) \cdot |X(p)|^{s+dim(W)+\frac{1}{2}} f(g,s).
\end{equation}
We say that a function $f(g,s)$ on $\tilde{G} \times \mathbb{C}$ is a holomorphic section of $Ind_{\tilde{P}}^{\tilde{G}}(\omega |\cdot|^s)$ if $f(g,s)$ is right $\tilde{K}$-finite, holomorphic as a function of $s \in \mathbb{C}$ for any fixed $g \in G$ and belongs to $Ind_{\tilde{P}}^{\tilde{G}}(\omega |\cdot|^s)$ for any fixed $s \in \mathbb{C}$. A holomorphic section is called standard if its restriction to $\tilde{K} \times \mathbb{C}$ is independent of $s$.

Let $\pi$ be an irreducible admissible representation of $G$. Denote by $\pi^\vee$ its admissible dual and by $(\cdot,\cdot):\pi \otimes \pi^\vee \rightarrow \mathbb{C}$ the natural bilinear pairing. For $F$ archimedean, an admissible representation of $G$ will by definition be an admissible $(\mathfrak{sp}_{2r},U(r))$-module, with $\pi^\vee$ the contragredient $(\mathfrak{sp}_{2r},U(r))$-module. Note that in this case, even though $\pi$ is not a representation of $G$, one can still define matrix coefficients as functions on $G$.

For $v \in \pi$, $v^\vee \in \pi^\vee$ and $\Phi\in Ind_{\tilde{P}}^{\tilde{G}}(\omega |\cdot|^s)$, define
\begin{equation}
Z(v,v^\vee,\Phi,\omega,s)=\int_G (\pi^\vee(g)v^\vee,v) \cdot \Phi(\iota(1,g),s)dg.
\end{equation} 

The following theorem summarizes some fundamental properties of these zeta integrals.

\begin{theorem}[\cite{PSRepsilon_factors}, \cite{LapidRallis}] \label{thm:main_thm_local_zeta_int} Let $\pi$ be an irreducible admissible representation of $G$.
\begin{enumerate}
\item There exists $s_0 \in \mathbb{R}$ such that the integrals $Z(v,v^\vee,\Phi,\omega,s)$ with $\Phi$ a holomorphic section of $Ind_{\tilde{P}}^{\tilde{G}}(\omega |\cdot|^s)$ converge absolutely when $Re(s)>s_0$.
\item The integrals $Z(v,v^\vee,\Phi,\omega,s)$ admit meromorphic continuation to $s \in \mathbb{C}$. For standard $\Phi$ and non-archimedean $F$ with residue field of order $q$, they are rational functions in $q^{-s}$ with bounded denominators. For archimedean $F$ and standard $\Phi$, they are polynomial multiples of a fixed meromorphic function of $s$.
\item Assume that $F$ is non-archimedean. Let $\omega$ be an unramified character of $F^\times$ and $\pi$ be an admissible irreducible representation of $G$ that is spherical with respect to $K=Sp_{2r}(\mathcal{O}_v)$. Let $std: (Sp_{2r} \times GL_{1,F})^\vee = SO_{2r+1}(\mathbb{C}) \times \mathbb{C}^\times \rightarrow GL_{2r+1}(\mathbb{C})$ be the standard representation. Let $v_0 \in \pi$ and $v_0^\vee \in \pi^\vee$ be vectors fixed by $K$ such that $(v_0,v_0^\vee)=1$. Let $\Phi^0 \in Ind_{\tilde{P}}^{\tilde{G}}(|\cdot|^s)$ be the unique $Sp_{4r}(\mathcal{O})$-fixed vector such that $\Phi^0(1,s)=1$. Then
\[
Z(v_0,v_0^\vee,\Phi^0,\omega,s)=\frac{L(\pi \times \omega,std,s+\frac{1}{2})}{d(\omega,s)},
\]
where $L(\pi \times \omega,std,s)$ denotes the local Langlands $L$-factor and
\[
d(\omega,s)=L(\omega,s+r+1/2) \cdot \prod_{i=1}^r L(\omega^2,2s+2i-1)
\]
with $L(\omega,s)$ the Tate local $L$-factor of $\omega$.
\end{enumerate}
\end{theorem}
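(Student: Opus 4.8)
The plan is to treat the three assertions in turn, following Piatetski-Shapiro--Rallis and Lapid--Rallis \cite{PSRNew,PSRepsilon_factors,LapidRallis}; assertions (1) and (2) are soft and rest on estimates for matrix coefficients together with a Bernstein/Wallach continuation argument, while (3) is the explicit computation that carries the analytic content. For (1), the first step is to bound the integrand of $Z(v,v^\vee,\Phi,\omega,s)$. For any irreducible admissible $\pi$, a $K$-finite matrix coefficient $g \mapsto (\pi^\vee(g)v^\vee,v)$ is dominated by $\Vert g\Vert^{N_\pi}$ for a constant $N_\pi$ depending only on the leading exponents of $\pi$ --- in the non-archimedean case via Casselman's asymptotics and Jacquet module theory, in the archimedean case via the Casselman--Wallach theory and the Casselman--Mili\v ci\'c asymptotic expansion. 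On the other hand, writing $\iota(1,g)=p(g)k(g)$ in the Iwasawa decomposition $\tilde{G}=\tilde{P}\tilde{K}$, one has $|\Phi(\iota(1,g),s)| \le C_\Phi\,|X(p(g))|^{\mathrm{Re}(s)+\dim W+\frac12}$, and $g\mapsto |X(p(g))|$ is, up to bounded factors, a fixed negative power of $\Vert g\Vert$. Hence for $\mathrm{Re}(s)$ large the integrand is bounded by $\Vert g\Vert^{-M}$ with $M$ arbitrarily large, and since $\int_G \Vert g\Vert^{-M}\,dg<\infty$ for $M\gg 0$, absolute convergence follows and pins down the constant $s_0$.

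For (2) in the non-archimedean case I would argue as follows. Since a standard section $\Phi$ is determined by its $s$-independent restriction to the compact group $\tilde{K}$, the function $g\mapsto \Phi(\iota(1,g),s)$ equals $|X(p(g))|^{s+\dim W+\frac12}\,\Phi(k(g))$, where $|X(p(g))|$ and the $\tilde K$-coset of $k(g)$ are locally constant in $g$. Combining this with the Cartan decomposition $G=K A^+ K$ and the asymptotic expansion of the matrix coefficient on $A^+$, the integral becomes a sum over a lattice cone of terms $c_\lambda(v,v^\vee,\Phi)\,q^{-\langle\lambda,s\rangle}$, the exponents $\lambda$ ranging over a finite set determined by the exponents of $\pi$ and by $\omega$ (and not by $v,v^\vee,\Phi$). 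Summing the resulting finitely many geometric series exhibits $Z$ as a rational function of $q^{-s}$ whose denominator divides a fixed polynomial $\prod_j(1-\alpha_j q^{-a_j s})$, giving the ``bounded denominators'' statement. In the archimedean case one instead shifts $s$ using the center of the universal enveloping algebra: there is a polynomial $b(s)$, independent of the data, such that $b(s)\,Z(v,v^\vee,\Phi,\omega,s)$ coincides with a zeta integral absolutely convergent on a larger half-plane, so iteration yields meromorphic continuation; combining finite-dimensionality of the space of ``$(s)$-equivariant'' continuous bilinear forms at each point with growth estimates in vertical strips then shows $Z$ is a polynomial multiple of one fixed meromorphic function (a suitable ratio of $\Gamma$-factors), the polynomial depending on the data. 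These arguments are carried out in \cite{PSRNew,PSRepsilon_factors,LapidRallis}.

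The heart is (3). Here I would insert the spherical data: $(\pi^\vee(g)v_0^\vee,v_0)$ is the zonal spherical function $\omega_\pi(g)$ of $\pi$, and $\Phi^0(\iota(1,g),s)$ is bi-$K$-invariant in $g$ (using $\iota(K\times K)\subset Sp_{4r}(\mathcal O)$), so by the Cartan decomposition $Z(v_0,v_0^\vee,\Phi^0,\omega,s)$ reduces to a sum over $A^+$ of $\omega_\pi(a)\,\Phi^0(\iota(1,a),s)$ against the Cartan measure. One then needs two explicit ingredients: Macdonald's formula for $\omega_\pi$ in terms of the Satake parameter of $\pi$, and an explicit Iwasawa decomposition of $\iota(1,a)$ relative to $\tilde P$ expressing $|X(\cdot)|$ on $\iota(1,a)$ as a monomial in the Cartan parameters (equivalently, the explicit value of the Siegel section on the image of $\iota$). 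Feeding both in, the sum is evaluated by a Gindikin--Karpelevich/Casselman--Shalika-type manipulation: the numerator assembles into the standard $L$-factor $L(\pi\times\omega,std,s+\tfrac12)$ attached to the Satake parameter via $std\colon SO_{2r+1}(\mathbb C)\times\mathbb C^\times\to GL_{2r+1}(\mathbb C)$, while the denominator is exactly the normalizing factor $d(\omega,s)=L(\omega,s+r+\tfrac12)\prod_{i=1}^r L(\omega^2,2s+2i-1)$ of the degenerate principal series $\mathrm{Ind}_{\tilde P}^{\tilde G}(\omega|\cdot|^s)$, i.e. the product of $L$-factors occurring in Langlands' constant-term formula for the Siegel Eisenstein section on $\tilde G=Sp_{4r}$; this is the computation of \cite{PSRNew}.

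I expect the main obstacle to be (3): keeping track of the precise normalizations --- the shift $s\mapsto s+\tfrac12$, the $\dim W+\tfrac12$ in the definition of the induction, and the exact shape of $d(\omega,s)$ --- so that the Gindikin--Karpelevich summation produces exactly the claimed ratio and not an off-by-a-shift variant. By contrast, the convergence and continuation statements in (1)--(2) are comparatively routine once the matrix-coefficient estimates and the Bernstein and Wallach continuation machinery are in hand.
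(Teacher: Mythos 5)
The paper does not prove this statement: it is stated as a summary of results from the cited references \cite{PSRepsilon_factors} and \cite{LapidRallis} (with the unramified computation in part~(3) also traceable to the lecture notes of Gelbart--Piatetski-Shapiro--Rallis and to Li's 1992 paper), and no proof appears anywhere in the paper. So there is no internal argument to compare your sketch against; I can only assess it as a reconstruction of the cited literature.

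With that caveat, your outlines of (1) and (2) are faithful in spirit to the standard arguments: polynomial growth of $K$-finite matrix coefficients via Casselman/Casselman--Wallach asymptotics, decay of the degenerate principal-series section on the image of $\iota(1,\cdot)$, rationality in $q^{-s}$ with bounded denominators from the finiteness of exponents, and Bernstein/Wallach-type continuation at the archimedean place. Two small imprecisions worth noting: ``$|X(p(g))|$ is, up to bounded factors, a fixed negative power of $\|g\|$'' is direction-dependent and should be phrased as a two-sided polynomial comparison valid on all of $G$ (this is what one actually needs to make the $M\gg0$ argument honest); and the archimedean meromorphic continuation in \cite{LapidRallis} is organized around the Bruhat filtration and uniqueness of the relevant intertwining functional rather than a literal ``$b(s)$-shift'' in the style of Bernstein's theorem for Archimedean local zeta functions, though the two are certainly related in flavor.

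The substantive gap is in part~(3), which you yourself flag. What you have written is a plan --- insert Macdonald's formula, compute the Iwasawa decomposition of $\iota(1,a)$ in $\tilde G = Sp_{4r}$ relative to $\tilde P$, and sum a Gindikin--Karpelevich-type series --- but the plan is not executed, and executing it is precisely where the content of the theorem lies. The $\rho$-shift $s\mapsto s+\tfrac12$, the appearance of $SO_{2r+1}(\mathbb C)\times\mathbb C^\times$ as the $L$-group of $Sp_{2r}\times GL_1$, and the exact shape $d(\omega,s)=L(\omega,s+r+\tfrac12)\prod_{i=1}^r L(\omega^2,2s+2i-1)$ all come out of the bookkeeping you have postponed. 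I would also caution that the Cartan-plus-Macdonald route is not obviously the one followed in the cited sources: the classical proofs (GPS--Rallis, Li) tend to argue either by a rank recursion or by manipulating the degenerate principal series and its intertwining operators, rather than by a direct lattice-cone summation of the zonal spherical function against $\Phi^0(\iota(1,a),s)$. Your route is plausible in principle, but until the summation is actually performed --- or a precise pointer to a reference that does it this way is given --- part~(3) should be regarded as unproved.
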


Part $(2)$ of this theorem makes it possible to introduce local $L$-factors $L_{PSR}(\pi,\omega,s)$ for any irreducible admissible representation of $Sp_{2r}$ (in fact of any classical group) over a non-archimedean local field $F$ and any character $\omega:F^\times \rightarrow \mathbb{C}^\times$. Namely, \cite{PSRTriple} and \cite{HarrisKudlaSweet} introduce a family of good sections $\Phi \in Ind_{\tilde{P}}^{\tilde{G}}(\omega |\cdot|^s)$ containing the standard sections. The set
\begin{equation}
\{Z(v,v^\vee,\Phi,\omega,s) \ | \ v \in \pi, v^\vee \in \pi^\vee, \Phi \text{ good} \}
\end{equation}
is a fractional ideal for the ring $\mathbb{C}[q^s,q^{-s}]$ of the form $(P(q^{-s})^{-1})$ for a unique polynomial $P(q^{-s})$ such that $P(0)=1$. Define
\begin{equation}
L_{PSR}(\pi,\omega,s+1/2)=\frac{1}{P(q^{-s})}.
\end{equation}

A different approach to defining local $L$-factors is proposed in \cite{LapidRallis}, where the study of the gamma factors arising in the functional equations of the zeta integrals above leads to an $L$-factor $L_{LR}(\pi,\omega,s)$. Both types of $L$-factors have been shown to agree in \cite{Yamana}. Thus we will simply denote this local $L$-factor by $L(\pi,\omega,s)$. For $\pi$ unramified and $\omega \equiv 1$, we have $L(\pi,\omega,s)=L(\pi,std,s)$.


With $d(\omega,s)$ as in the theorem above, define a normalized zeta integral by
\begin{equation} \label{eq:normalized_zeta_int}
Z^0(v,v^\vee,\Phi,\omega,s)=\left( \frac{L(\pi,\omega,s+1/2)}{d(\omega,s)} \right)^{-1}\cdot Z(v,v^\vee,\Phi,\omega,s).
\end{equation}
Then $Z^0(v,v^\vee,\Phi,\omega,s)$ is a meromorphic function of $s$ (holomorphic outside the set of poles of $d(\omega,s)$) provided that $\Phi$ is a good section.

The following lemma will be used in the proof of \autoref{thm:Rallis_pairing} to relate the zeta integrals appearing in the computation to the doubling zeta integrals just discussed. It generalizes \cite[Lemma 1]{PSRNew} (the case where $G=Sp_{2r}$, $K$ is maximal and $\sigma$ is the trivial representation). 
\begin{lemma}
Let $F$ be a local field and $G=\underline{G}(F)$ the set of $F$-points of a reductive algebraic group $\underline{G}$ over $F$. Let $K$ be an open compact subgroup of $G$ if $F$ is non-archimedean, and a maximal compact subgroup of $G$ if $F$ is archimedean. Let $\pi$ be an admissible, irreducible representation of $G$. If $F$ is archimedean, we assume that $\pi$ is a smooth irreducible Fr\'echet representation of moderate growth whose underlying $(\mathfrak{g},K)$-module $\pi_K$ is admissible.  Let $\sigma$ be an irreducible representation of $K$ and let $v_\sigma \in \sigma$, $v_\sigma \neq 0$. Assume that $\sigma$ appears with multiplicity one in $\pi$ (resp. $\pi_K$, for $F$ archimedean). Fix a $K$-invariant embedding $\iota_\pi: \sigma \hookrightarrow \pi$ and choose $v^\vee_{\sigma} \in \pi^\vee$ such that $\langle \pi^\vee(k)v^\vee_{\sigma} | k \in K \rangle \cong \sigma^{\vee}$ and $(\iota_\pi(v_\sigma),v^\vee_{\sigma})=1$.

Let $l:\pi \rightarrow \mathbb{C}$ be a linear functional. If $F$ is archimedean, assume that $l$ is continuous. Then, for all $v \in \pi$ and $\alpha \in \sigma^\vee$: 
\[
\int_{K} l(\pi(k)v) \cdot \alpha(\sigma(k^{-1})v_\sigma)dk=l(\iota_\pi (v_\sigma)) \cdot \int_{K} (\pi(k)v,v^\vee_{\sigma}) \cdot \alpha(\sigma(k^{-1})v_\sigma)dk.
\]

\end{lemma}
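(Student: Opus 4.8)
The plan is to exhibit both sides as the value of a fixed linear functional on one and the same averaged vector, and then to show that that vector is forced to lie on the line spanned by $\iota_\pi(v_\sigma)$. Concretely, for $v\in\pi$ put
\[
P(v):=\int_K \alpha\!\left(\sigma(k^{-1})v_\sigma\right)\,\pi(k)v\,dk\ \in\ \pi .
\]
This integral is well defined: if $F$ is non-archimedean it is a finite sum, since $v$ is fixed by an open compact subgroup of $G$; if $F$ is archimedean, $k\mapsto\pi(k)v$ is a continuous map from the compact group $K$ into the Fr\'echet space $\pi$. The functionals $l$ and $w\mapsto(w,v_\sigma^\vee)$ are continuous (in the archimedean case $v_\sigma^\vee$ lies in the $(\mathfrak g,K)$-module underlying $\pi^\vee$, which pairs continuously with $\pi$), so they may be moved inside the integral; hence the left-hand side of the lemma equals $l\bigl(P(v)\bigr)$, and the integral appearing on the right-hand side equals $\bigl(P(v),v_\sigma^\vee\bigr)$. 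Consequently the lemma follows once we prove $P(v)\in\mathbb C\cdot\iota_\pi(v_\sigma)$ for every $v$: writing $P(v)=c(v)\,\iota_\pi(v_\sigma)$, the left-hand side becomes $c(v)\,l\bigl(\iota_\pi(v_\sigma)\bigr)$, the right-hand side becomes $l\bigl(\iota_\pi(v_\sigma)\bigr)\cdot c(v)\bigl(\iota_\pi(v_\sigma),v_\sigma^\vee\bigr)$, and these coincide by the normalization $\bigl(\iota_\pi(v_\sigma),v_\sigma^\vee\bigr)=1$.

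To prove $P(v)\in\mathbb C\cdot\iota_\pi(v_\sigma)$ I would argue as follows. First, $P(v)$ depends only on $e_\sigma v$, where $e_\sigma$ is the projection onto the $\sigma$-isotypic subspace $\pi(\sigma)\subset\pi$; that is, $P(v)=P(e_\sigma v)$. This is a routine change of variables in the double integral obtained after writing $e_\sigma$ as convolution against a multiple of $\overline{\chi_\sigma}$, using that $e_\sigma$ commutes with $\pi$ and that $v_\sigma$ is already $\sigma$-isotypic inside $\sigma$; equivalently, if $v$ lies in a $K$-subrepresentation of type $\tau\not\cong\sigma$ then $P(v)=0$, by orthogonality of matrix coefficients of non-isomorphic irreducible representations of $K$. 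Now admissibility of $\pi$ (resp. of $\pi_K$ when $F$ is archimedean) makes $\pi(\sigma)$ finite dimensional, and the multiplicity-one hypothesis, together with the $K$-equivariance of $\iota_\pi$, gives $\pi(\sigma)=\iota_\pi(\sigma)$; hence $e_\sigma v=\iota_\pi(v')$ for a unique $v'\in\sigma$. Using $K$-equivariance of $\iota_\pi$ once more,
\[
P(v)=P\bigl(\iota_\pi(v')\bigr)=\iota_\pi\!\left(\int_K \alpha\!\left(\sigma(k^{-1})v_\sigma\right)\sigma(k)v'\,dk\right),
\]
and Schur orthogonality on $K$, applied to the rank-one endomorphism $x\mapsto\alpha(x)\,v'$ of $\sigma$, gives $\int_K \alpha\!\left(\sigma(k^{-1})v_\sigma\right)\sigma(k)v'\,dk=(\dim\sigma)^{-1}\alpha(v')\,v_\sigma$. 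Therefore $P(v)=(\dim\sigma)^{-1}\alpha(v')\,\iota_\pi(v_\sigma)$, as required, with $c(v)=\alpha(v')/\dim\sigma$; in particular the common value of the two sides is visibly independent of the choice of $\iota_\pi$.

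Essentially nothing here is new: the one substantive computation is Schur orthogonality on the compact group $K$, and the multiplicity-one hypothesis enters exactly once — to identify $\pi(\sigma)$ with $\iota_\pi(\sigma)$, after which the Schur computation pins $P(v)$ to the line $\mathbb C\cdot\iota_\pi(v_\sigma)$. The only place where care is genuinely required is the archimedean case: one must check that the vector-valued integral $P(v)$ is well defined in the Fr\'echet space $\pi$, that the continuous functionals $l$ and $(\,\cdot\,,v_\sigma^\vee)$ may legitimately be interchanged with it, and that $\pi(\sigma)$ — defined via the continuous projector $e_\sigma$ — really is the finite-dimensional $\sigma$-isotypic component of the underlying $(\mathfrak g,K)$-module, so that the identification $\pi(\sigma)=\iota_\pi(\sigma)$ and the reduction $P(v)=P(e_\sigma v)$ are valid. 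In the smooth admissible non-archimedean setting all of these points are immediate, so I regard this functional-analytic bookkeeping as the main (and rather modest) obstacle.
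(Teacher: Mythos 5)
Your proof is correct, and the functional-analytic bookkeeping in the archimedean case is handled properly (continuity of $l$ and of $(\cdot,v_\sigma^\vee)$, well-definedness of the vector-valued integral, and the identification of the isotypic projector's image with the $(\mathfrak g,K)$-module component all hold for the reasons you give). It does, however, take a different route from the paper's. The paper observes directly that both sides of the claimed identity, viewed as bilinear forms
\[
h_1(v\otimes\alpha)=\int_K l(\pi(k)v)\,\alpha(\sigma(k^{-1})v_\sigma)\,dk,
\qquad
h_2(v\otimes\alpha)=\int_K (\pi(k)v,v_\sigma^\vee)\,\alpha(\sigma(k^{-1})v_\sigma)\,dk,
\]
are $K$-invariant functionals on $\pi\otimes\sigma^\vee$, i.e.\ elements of $\mathrm{Hom}_K(\pi\otimes\sigma^\vee,\mathbb C)\cong\mathrm{Hom}_K(\pi,\sigma)$; by the multiplicity-one hypothesis (and, in the archimedean case, by the embedding of continuous $K$-maps into $\mathrm{Hom}_K(\pi_K,\sigma)$), this space is one-dimensional, so $h_1$ and $h_2$ are automatically proportional, and one last evaluation at $\iota_\pi v_\sigma\otimes\iota_\pi^\vee v_\sigma^\vee$ via Schur orthogonality pins the constant to $l(\iota_\pi(v_\sigma))$. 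Your argument instead introduces the averaged vector $P(v)\in\pi$, uses multiplicity one together with the vanishing of cross matrix coefficients to show $P(v)=P(e_\sigma v)\in\iota_\pi(\sigma)$, and then applies Schur orthogonality inside the finite-dimensional $\sigma$ to locate $P(v)$ on the line $\mathbb C\cdot\iota_\pi(v_\sigma)$. Both arguments hinge on the same two inputs (multiplicity one and Schur orthogonality) but deploy them in opposite order: the paper uses multiplicity one first, as a pure dimension count in the space of invariant functionals, so that Schur orthogonality enters only at the end to normalize; you use Schur orthogonality constructively inside $\sigma$ and multiplicity one to reduce $\pi$ to $\iota_\pi(\sigma)$. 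The paper's version is shorter because it never needs the explicit value of the proportionality constant for a general $v$; yours is more concrete and makes visible the scalar $c(v)=\alpha(v')/\dim\sigma$, which can be a useful byproduct but is not needed for the statement.
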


\begin{proof}
For any $v \in \pi$ and $\alpha \in \sigma^\vee$, define
\begin{equation*}
\begin{split}
h_1(v \otimes \alpha) &= \int_K l(\pi(k)v) \cdot \alpha(\sigma(k^{-1})v_\sigma) dk, \\
h_2(v \otimes \alpha) &= \int_K (\pi(k)v,v_{\sigma^\vee}) \cdot \alpha(\sigma(k^{-1})v_\sigma) dk.
\end{split}
\end{equation*}
Then $h_1, h_2 \in Hom_K(\pi \otimes \sigma^\vee,\mathbb{C})$ and are therefore proportional to each other (for $F$ archime\-dean, use that $h_1$ and $h_2$ are continuous, and the space of continuous $K$-invariant linear maps from $\pi$ to $\sigma$ embeds in $Hom_K(\pi_K,\sigma)$, which is one-dimensional). Evaluating them at $\iota_\pi v_\sigma \otimes \iota^\vee_\pi v_\sigma^\vee$ using the Schur orthogonality relations shows that
\[
h_1=l(\iota_\pi(v_\sigma)) \cdot h_2
\]
as required.
\end{proof}

\begin{corollary} \label{cor:evaluation_nonunique_local_zeta_int}
Let $G=Sp_{2r}(F)$ and let $\Phi \in Ind_{\tilde{P}}^{\tilde{G}}(\omega |\cdot|^s)$. Assume that, for fixed $g$ and $s$, the function $\Phi(\iota(1,kg),s)=\Phi(\iota(k^{-1},g),s):K \rightarrow \mathbb{C}$ is a matrix coefficient of $\sigma^\vee$, of the form $\alpha_{g,s}(\sigma(k)^{-1}v_\sigma)$. Then
\[
\int_{G} l(\pi(g)v) \cdot \Phi(\iota(1,g),s)dg=l(\iota_\pi (v_\sigma)) \cdot Z(v^\vee_{\sigma},v,\Phi,\omega,s).
\]
\end{corollary}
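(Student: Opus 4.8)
The plan is to reduce the Corollary to the preceding Lemma by a symmetrization over $K$ carried out on both sides of the desired identity. Throughout I would restrict to a right half-plane in which the left-hand integral and the doubling zeta integral $Z(\cdot,\cdot,\Phi,\omega,s)$ converge absolutely --- possible by \autoref{thm:main_thm_local_zeta_int}(1) --- so that every interchange of $\int_G$ and $\int_K$ below is a legitimate application of Fubini's theorem; the identity of holomorphic functions so obtained then propagates to all $s$ by the meromorphic continuation in \autoref{thm:main_thm_local_zeta_int}(2).

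First I would average the left-hand integral over $K$. Since $\iota$ is a group homomorphism and Haar measure on $G$ is left invariant, the substitution $g \mapsto kg$ gives, for each $k \in K$,
\[
\int_G l(\pi(g)v)\,\Phi(\iota(1,g),s)\,dg = \int_G l\bigl(\pi(k)\pi(g)v\bigr)\,\Phi(\iota(1,kg),s)\,dg ;
\]
integrating over $k \in K$ against the probability Haar measure, applying Fubini, and inserting the standing hypothesis $\Phi(\iota(1,kg),s) = \Phi(\iota(k^{-1},g),s) = \alpha_{g,s}(\sigma(k^{-1})v_\sigma)$ turns this into
\[
\int_G l(\pi(g)v)\,\Phi(\iota(1,g),s)\,dg = \int_G \Bigl( \int_K l\bigl(\pi(k)\pi(g)v\bigr)\,\alpha_{g,s}\bigl(\sigma(k^{-1})v_\sigma\bigr)\,dk \Bigr)\,dg .
\]

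Next I would apply the preceding Lemma to the inner $K$-integral for each fixed $g$ and $s$, with vector argument $\pi(g)v \in \pi$ and $\sigma^\vee$-argument $\alpha_{g,s}$. The hypotheses it requires --- that $\sigma$ occur with multiplicity one in $\pi$ (resp.\ in $\pi_K$), that $l$ be continuous in the archimedean case, and that $\alpha_{g,s}$ depend smoothly (resp.\ locally constantly) on $g$, which holds because $\Phi$ is a section of the induced representation --- are all in force. The Lemma replaces the inner integral by $l(\iota_\pi(v_\sigma))$ times $\int_K (\pi(k)\pi(g)v,v_\sigma^\vee)\,\alpha_{g,s}(\sigma(k^{-1})v_\sigma)\,dk$. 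Finally I would run the symmetrization backwards: rewriting $(\pi(k)\pi(g)v,v_\sigma^\vee) = (\pi(kg)v,v_\sigma^\vee)$, reinserting $\alpha_{g,s}(\sigma(k^{-1})v_\sigma) = \Phi(\iota(1,kg),s)$, interchanging $\int_G$ and $\int_K$ once more and substituting $kg \mapsto g$ collapses the double integral --- since $K$ has total mass $1$ --- back to $\int_G (\pi(g)v,v_\sigma^\vee)\,\Phi(\iota(1,g),s)\,dg$, which is $Z(v_\sigma^\vee,v,\Phi,\omega,s)$; together with the extracted factor $l(\iota_\pi(v_\sigma))$ this is the assertion.

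The argument is essentially formal, so I expect no deep obstacle; the point needing the most attention is the bookkeeping that identifies the final integral $\int_G (\pi(g)v,v_\sigma^\vee)\,\Phi(\iota(1,g),s)\,dg$ with $Z(v_\sigma^\vee,v,\Phi,\omega,s)$ in the notation fixed before \autoref{thm:main_thm_local_zeta_int} --- keeping straight which of the first two arguments lies in $\pi$ and which in $\pi^\vee$, and the attendant placement of $\pi$ versus $\pi^\vee$ and of $g$ versus $g^{-1}$ in the matrix coefficient under the pairing conventions there. The remaining technicalities --- justifying the Fubini interchanges and the passage from the region of absolute convergence to general $s$ --- are routine once the continuation of \autoref{thm:main_thm_local_zeta_int}(2) is invoked.
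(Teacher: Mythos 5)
Your argument is exactly the paper's: symmetrize the integrand over $K$, invoke the preceding Lemma fiber-wise, and unsymmetrize to recognize $Z(v_\sigma^\vee,v,\Phi,\omega,s)$. The paper states this in one line ("Replace the integrand in the left hand side with its average over $K$ and apply the previous lemma"), and your write-up is a correct and careful expansion of it, including the right caveats about convergence and the notational placement of arguments in $Z$.
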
 
\begin{proof} Replace the integrand in the left hand side with its average over $K$ and apply the previous lemma. 
\end{proof}

\subsection{A Siegel-Weil formula} \label{subsection:Siegel_Weil_formla} We will need a generalization of the original Siegel-Weil theorem in \cite{Weil} due to Kudla and Rallis. We recall its statement next.

Denote by $Sp_{2r}$ the symplectic group of rank $r$ defined by \eqref{eq:def_Sp_2r}. Let
\begin{equation}
\begin{split}
N_{2r}&=\left\{ n=n(X)= \left(\begin{array}{cc} 1_r & X \\ & 1_r \end{array} \right) | X= {}^tX \in Sym_r \right\}, \\
M_{2r}&=\left\{ m=m(a)= \left(\begin{array}{cc} a &  \\ & {}^t a^{-1} \end{array} \right) | a \in GL_r \right\}, \\
P_{2r}&=N_{2r} M_{2r}.
\end{split}
\end{equation}
Then $P=P_{2r}$ is a maximal parabolic subgroup of $Sp_{2r}$ known as the Siegel parabolic. Let $F$ be a totally real number field and define a maximal compact subgroup $K=\Pi_v K_v$ of $Sp_{2r}(\mathbb{A}_F)$ with $K_v=Sp_{2r}(\mathcal{O}_v)$ (resp. $K_v \cong U(r)$) when $v$ is non-archimedean (resp. archimedean). For any place $v$ of $F$, one has $Sp_{2r}(F_v)=N_{2r}(F_v)M_{2r}(F_v)K_v$ and hence
\begin{equation}
|a(g)|=|\det(a)|
\end{equation}
for $g=n(X)m(a)k$ is a well-defined quantity.

Let $V$ be a non-degenerate quadratic vector space over $F$ of even dimension $m$. Denote by $\det(V)$ the element of $F^\times/(F^{\times})^2$ determined by $\det((v_i,v_j))$ for any $F$-basis $\{v_1,\ldots,v_{m}\}$ of $V$. Let $\chi_V$ be the quadratic character defined by
\begin{equation}
\chi_V(a)=(\det(a),(-1)^\frac{m}{2}\det(V))_{\mathbb{A}}
\end{equation}
where $a \in GL_r(F)$ and $(\cdot,\cdot)_\mathbb{A}=\Pi_v (\cdot,\cdot)_v$ denotes the Hilbert symbol. For a Schwartz function $\varphi \in \mathcal{S}(V(\mathbb{A})^r)$, $g \in Sp_{2r}(\mathbb{A}_F)$ and $s \in \mathbb{C}$, define
\begin{equation}
\Phi(g,s)=\omega(g)\varphi(0) \cdot |a(g)|^{s-\frac{m-r-1}{2}}.
\end{equation}
Then
\begin{equation}
\Phi(nm(a)g,s)=\chi_V(a) |a|^{s+\frac{r+1}{2}} \Phi(g,s),
\end{equation}
that is, $\Phi(g,s)$ defines an element of the induced representation $Ind_{P(\mathbb{A}_F)}^{Sp_{2r}(\mathbb{A}_F)}(\chi_V |\cdot|^s)$. Form the Eisenstein series attached to $\Phi$:
\begin{equation} \label{eq:def_Eisenstein_series}
E(g,\Phi,s)=\sum_{\gamma \in P(F)\backslash Sp_{2r}(F)}\Phi(\gamma g,s).
\end{equation}
The sum converges absolutely for $Re(s)>(r+1)/2$ and the function $E(g,\Phi,s)$ admits meromorphic continuation to $s \in \mathbb{C}$.
\begin{theorem}\cite{KudlaRallis1}\label{thm:SiegelWeil}
Assume that $V$ is anisotropic over $F$. Then:
\begin{enumerate}
\item[i)] The function $E(g,\Phi,s)$ is holomorphic at $s_0=(m-r-1)/2$.
\item[ii)] Let
\begin{equation*} \label{eq:def_kappa_m}
\kappa_m = \left\{ \begin{array}{cc} 1, & \text{ if } m >r+1 \\ 
2, & \text{ if } m\leq r+1. \end{array} \right.
\end{equation*}
Then
\begin{equation*}
E(g,\Phi,s_0)=\kappa_m \int_{O(V)(F) \backslash O(V)(\mathbb{A})} \theta(g,h;\varphi)dh,
\end{equation*}
where $dh$ is an invariant measure normalized so that 
\[
Vol(O(V)(F) \backslash O(V)(\mathbb{A}),dh)=1.
\]
\end{enumerate}
\end{theorem}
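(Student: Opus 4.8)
The plan is to deduce the formula from Weil's original Siegel--Weil identity \cite{Weil}, valid where the Siegel Eisenstein series converges absolutely, and then to propagate it down to the boundary point $s_0=(m-r-1)/2$ by induction on the rank $r$, comparing the two sides through their constant terms along the Siegel parabolic $P$. First I would record that both sides are defined: since $V$ is anisotropic the quotient $O(V)(F)\backslash O(V)(\mathbb{A})$ is compact, so $I(g,\varphi):=\int_{O(V)(F)\backslash O(V)(\mathbb{A})}\theta(g,h;\varphi)\,dh$ converges absolutely for every $\varphi\in\mathcal{S}(V(\mathbb{A})^r)$ and is a moderate growth automorphic form on $Sp_{2r}(\mathbb{A}_F)$; moreover $g\mapsto\omega(g)\varphi(0)=\Phi(g,s_0)$ lies in $Ind_{P(\mathbb{A}_F)}^{Sp_{2r}(\mathbb{A}_F)}(\chi_V |\cdot|^{s_0})$, so $I(\cdot,\varphi)$ transforms under $P(\mathbb{A}_F)$ exactly like the value $E(\cdot,\Phi,s_0)$ we want to match, while $E(g,\Phi,s)$ itself converges for $Re(s)>(r+1)/2$ and continues meromorphically, part~(i) being the claim that $s_0$ is not a pole.

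In the convergent range $m>2r+2$ (equivalently $s_0>(r+1)/2$) the sum \eqref{eq:def_Eisenstein_series} is absolutely convergent at $s=s_0$ and the identity with $\kappa_m=1$ is Weil's theorem: I would expand $\theta(g,h;\varphi)=\sum_{x\in V(F)^r}\omega(g)\varphi(h^{-1}x)$, break $V(F)^r$ into $O(V)(F)$-orbits, and integrate term by term. A frame $x=(x_1,\dots,x_r)$ with $\det((x_i,x_j))\neq 0$ has stabilizer $O(U^\perp)$ in $O(V)$, where $U$ is the span of the $x_i$, and Weil's measure identity identifies the sum over its orbit with the Fourier coefficient, attached to the Gram matrix of $x$, of the Eisenstein series $\sum_{\gamma\in P(F)\backslash Sp_{2r}(F)}\Phi(\gamma g,s_0)$; the degenerate orbits give the degenerate Fourier coefficients, and the identity follows.

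For $m\le 2r+2$ I would argue by induction on $r$, the base cases being Weil's range and $r\le 1$. The holomorphy in part~(i) follows because the poles of a Siegel Eisenstein series in $Re(s)\ge 0$ all arise from the intertwining operator in its $P$-constant term, hence are governed by the factor $d(\chi_V,s)$ and by the reducibility points of the degenerate principal series $Ind_P(\chi_V |\cdot|^s)$; the Kudla--Rallis description of these, together with the fact that $\Phi=\Phi_\varphi$ comes from an actual (coherent) quadratic space, shows $s_0$ is not a pole. For the identity I would compute the constant term $E_P(g,\Phi,s)=\Phi(g,s)+M(s)\Phi(g,s)$ and, at $s=s_0$, identify $M(s_0)\Phi(\cdot,s_0)$ with the theta integral for the complementary quadratic space of dimension $2r+2-m$ via the inductive hypothesis, while on the other side the $P$-constant term of $I(\cdot,\varphi)$, obtained by integrating $\theta$ over $N_{2r}(F)\backslash N_{2r}(\mathbb{A}_F)$, selects the frames $x$ with $(x_i,x_j)=0$ and, sorted by the dimension of their span, yields the same combination of lower-rank theta integrals; when $m\le r+1$ one first uses the functional equation $E(g,\Phi,s)=E(g,M(s)\Phi,-s)$ to pass to the already treated point $-s_0\ge 0$. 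This matches the constant terms and the degenerate Fourier coefficients of the two sides, while the nondegenerate Fourier coefficients are matched directly through the local density (local Siegel--Weil) formulas, the nondegenerate orbit being unique for anisotropic local components; the constant $\kappa_m$ is pinned down here, entering precisely when the normalization of $M(s_0)$ is compared with the volume normalization $Vol(O(V)(F)\backslash O(V)(\mathbb{A}),dh)=1$. Since an automorphic form all of whose Fourier coefficients along $N_{2r}$ vanish is identically zero, this gives the asserted equality.

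The crux is this last step: establishing holomorphy at $s_0$ needs the full Kudla--Rallis analysis of the poles and residues of degenerate principal series for $Sp_{2r}$ and of $d(\chi_V,s)$, and the inductive constant-term comparison must be carried out with precise bookkeeping of Haar measures, which is exactly what produces the dichotomy $\kappa_m\in\{1,2\}$. By comparison the convergent-range case is Weil's classical orbit computation and the opening reduction is formal.
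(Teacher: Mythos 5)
This theorem is stated with the citation \cite{KudlaRallis1} and is not proved in the paper; it is imported as a black box. There is therefore no in-paper argument to compare against, and the relevant question is only whether your reconstruction of the Kudla--Rallis proof is sound.

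As an outline it is structurally on target: orbit unfolding gives Weil's identity in the absolutely convergent range $m>2r+2$, and beyond that one proves holomorphy of $E(g,\Phi,s)$ at $s_0$ and matches Fourier coefficients along the Siegel parabolic, with nondegenerate coefficients handled by local Whittaker/density identities and degenerate ones inductively through the constant term. But each of the three hard steps you name --- holomorphy at $s_0$ (which rests on the local theory of the degenerate principal series $Ind_{P}(\chi_V|\cdot|^s)$ and of the normalized intertwining operators), the identification of $M(s_0)\Phi$ in the constant term with a Siegel--Weil section for a complementary quadratic space, and the Haar-measure bookkeeping that produces the dichotomy $\kappa_m\in\{1,2\}$ --- is a substantial theorem in its own right and is pointed at rather than carried out; you say as much at the end. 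One substantive caution on the outline itself: in the range $m\le r+1$ you ``pass to $-s_0\ge 0$'' via the functional equation $E(g,\Phi,s)=E(g,M(s)\Phi,-s)$, but this presupposes that $M(s_0)\Phi$ is again (a multiple of) a Siegel--Weil section attached to a quadratic space, which is not a priori true; establishing that, and computing the multiple, is exactly where the factor $\kappa_m=2$ comes from, so this step cannot be treated as a formal reduction to the already-handled case.
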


\subsection{Euler factorization of a global integral} \label{subsection:Euler_factorization} In this section we introduce a global integral over the automorphic quotient of $Sp_{2r,F}$. This integral (for $r=2$) will appear in the proof of \autoref{thm:Rallis_pairing} concerning the values of the currents $[\tilde{\Phi}(T,\varphi,s)]$ introduced above. The main result will give an Euler product factorization for it, with local factors given by the local doubling zeta integrals reviewed in \autoref{subsection:Local_zeta_integrals}.

We fix some notation that will hold throughout this section. We write $G=Sp_{2r,F}$. Let $K^\infty=\Pi_{v \nmid \infty} K_v$ be an open compact subgroup of $G(\mathbb{A}_F^\infty)$ and define $K=K^\infty \times \Pi_{v | \infty}U(r)$. Fix a Haar measure  $dk=\Pi_v dk_v$ on $K$ defined by local measures such that $Vol(K_v,dk_v)=1$ for every place $v$. Let $\sigma=\otimes'_{v} \sigma_v$ be an irreducible representation of $K$. Thus $\sigma$ is finite dimensional and for $v$ outside some finite set of places, the local component $\sigma_v$ is the trivial representation. Let $\pi=\otimes'_v \pi_v \subset \mathcal{A}_0(G)$ be a cuspidal automorphic representation of $G(\mathbb{A}_F)$. We assume that
\begin{equation} \label{eq:global_mult_one_assumption}
\dim_\mathbb{C} Hom_K(\sigma,\pi)=1
\end{equation}
and fix a (unique up to scalar multiplication) $K$-equivariant embedding $\iota_\pi=\otimes_v \iota_{\pi,v}:\sigma \hookrightarrow \pi$.

Write $\tilde{G}=Sp_{4r,F}$ and let $P=P_{4r} \subset \tilde{G}$ be its Siegel parabolic subgroup defined in \autoref{subsection:Siegel_Weil_formla}. Given $\Phi \in Ind_{P(\mathbb{A}_F)}^{\tilde{G}(\mathbb{A}_F)}(\chi_V|\cdot|^s)$ and vectors $w \in \sigma$ and $w^\vee \in \sigma^\vee$, define a section $\Phi_{w^\vee,w} \in Ind_{P(\mathbb{A}_F)}^{\tilde{G}(\mathbb{A}_F)}(\chi_V|\cdot|^s)$ by
\begin{equation}
\Phi_{w^\vee,w}(g,s)=\int_{K} (\sigma^\vee(k)w^\vee,w) \cdot \Phi(g\iota(k,1),s)dk.
\end{equation}
For every $k \in K$, we have
\begin{equation} \label{eq:Phi_w,wvee_invariance}
(r(\iota(k,1))\Phi)_{\sigma^\vee(k)w^\vee,w}=\Phi_{w^\vee,w}.
\end{equation}
and
\begin{equation} \label{eq:Phi_w,wvee_invariance_2}
\Phi_{w^\vee,w}(g\iota(k,1),s)=\Phi_{w^\vee,\sigma(k)w}(g,s).
\end{equation}
Note that if $w=\otimes_v w_v$, $w^\vee=\otimes_v w^\vee_v$ and $\Phi=\otimes_v \Phi_v$ are pure tensors, then
\begin{equation}
\Phi_{w^\vee,w}(g,s)=\prod_v (\Phi_v)_{w_v^\vee,w_v}(g_v,s),
\end{equation}
where we define
\begin{equation} \label{eq:def_Phi_v_wveew}
(\Phi_v)_{w_v^\vee,w_v}(g_v,s),=\int_{K_v} (\sigma_v^\vee(k)w^\vee_v,w_v) \cdot \Phi_v(g_v\iota(k_v,1),s)dk_v.
\end{equation}
If $v$ is non-archimedean, $\sigma_v$ is the trivial representation of $Sp_{4r}(\mathcal{O}_v)$ and $\Phi^0_v$ is $Sp_{4r}(\mathcal{O}_v)$-invariant, then
\begin{equation} \label{eq:proj_unramified_section}
(\Phi^0_v)_{w_v^\vee,w_v}=\Phi^0_v
\end{equation}
provided that $(w_v,w_v^\vee)=1$.

Denote by $E(g,\Phi,s)$ the Eisenstein series attached to $\Phi$ (see \eqref{eq:def_Eisenstein_series}) and let $f \in \pi$ be a cusp form. For $g' \in G(\mathbb{A}_F)$ and $w$, $w^\vee$ and $\Phi$ as above, the integral
\begin{equation}
Z_w(g';f,w^\vee,\Phi)=\int_{G(F) \backslash G(\mathbb{A}_F)} f(g) \cdot E(\iota(g',g),\Phi_{w^\vee,w},s)dg
\end{equation}
converges for $s$ outside the set of poles of the Eisenstein series due to the cuspidality of $f$. By \eqref{eq:Phi_w,wvee_invariance}, it defines a functional
\begin{equation}
Z_w(g'; \cdot) \in Hom_{K \times G(\mathbb{A}_F)}(\pi \otimes \sigma^\vee \otimes Ind_{P(\mathbb{A}_F)}^{\tilde{G}(\mathbb{A}_F)}(\chi_V|\cdot|^s),\mathbb{C})
\end{equation}
for $s$ outside the said finite set. Here we regard $Ind_{P(\mathbb{A}_F)}^{\tilde{G}(\mathbb{A}_F)}(\chi_V|\cdot|^s)$ as a representation of $K \times G(\mathbb{A}_F)$ through the embedding $\iota$ defined by \eqref{eq:def_iota}.

Let us now introduce local functionals at every place $v$ of $F$. These functionals are defined using the local zeta integrals reviewed in \autoref{subsection:Local_zeta_integrals}. In that section we used local sections of an induced representation $Ind_{\tilde{P}}^{\tilde{G}}$ with respect to a certain parabolic $\tilde{P}$ different from the Siegel parabolic $P$ of $\tilde{G}$. These parabolics are conjugate: the matrix
\begin{equation} \label{eq:def_delta}
\delta = \left( \begin{array}{cccc}  &  & 1_2 &  \\
 & 1_2 &  &  \\
-1_2 & 1_2 &  &  \\
 &  & 1_2 & 1_2
 \end{array} \right) \in \tilde{G}(F)
\end{equation}
satisfies $\delta \tilde{P} \delta^{-1} = P$. For any place $v$ of $F$ it induces an isomorphism of $\tilde{G}(F_v)$-representations 
\begin{equation}
\delta_*:Ind_{P(F_v)}^{\tilde{G}(F_v)}(\chi_{V,v} |\cdot|^s) \rightarrow Ind_{\tilde{P}(F_v)}^{\tilde{G}(F_v)}(\chi_{V,v} |\cdot|^s)
\end{equation}
given by
\begin{equation}
\delta_*\Phi(g,s)=\Phi(\delta g,s).
\end{equation}
Note that in particular we have 
\begin{equation} \label{eq:Phi_delta_inv}
\Phi(\delta \iota(g',g'g),s)=\Phi(\delta \iota (1,g),s).
\end{equation}

\begin{lemma} \label{lemma:int_independence}
Let $v$ be a place of $F$ and let $w_v \in \sigma_v$ and $f_v^\vee \in \pi_v^\vee$ be such that $(\iota_{\pi,v}(w_v),f_v^\vee)=0$. Then we have
\[
Z(f_v^\vee,f_v,\delta_*\Phi_{w_v^\vee,w_v},\chi_{V,v},s)=0
\]
for any $f_v \in \pi$, $w_v^\vee \in \sigma_v^\vee$ and $\Phi \in Ind_{P(F_v)}^{\tilde{G}(F_v)}(\chi_{V,v}|\cdot|^s)$.
\end{lemma}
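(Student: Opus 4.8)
The plan is to reduce the vanishing statement to the multiplicity-one hypothesis \eqref{eq:global_mult_one_assumption} (in its local incarnation) by exhibiting the zeta integral $Z(f_v^\vee,f_v,\delta_*\Phi_{w_v^\vee,w_v},\chi_{V,v},s)$ as the evaluation of a $K_v$-invariant functional on a vector that lies in the kernel of the pairing with $f_v^\vee$. First I would unwind the definition of $(\Phi_v)_{w_v^\vee,w_v}$ in \eqref{eq:def_Phi_v_wveew}: inserting it into the zeta integral $Z(v,v^\vee,\Phi,\omega,s)=\int_G (\pi^\vee(g)v^\vee,v)\Phi(\iota(1,g),s)\,dg$ gives a double integral over $G \times K_v$, and after the change of variables $g \mapsto \iota(k_v,1)^{-1}g$ inside (using \eqref{eq:Phi_delta_inv} to move the $\iota(k_v,1)$ through $\delta$ and the Eisenstein-type structure) the integrand factors through the matrix coefficient $(\sigma_v^\vee(k_v)w_v^\vee,w_v)$ against $(\pi_v^\vee(g)f_v^\vee, \pi_v(k_v)f_v)$. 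In other words $Z(f_v^\vee,f_v,\delta_*\Phi_{w_v^\vee,w_v},\chi_{V,v},s)$ is a $\mathbb{C}$-bilinear expression in $f_v$ and in the vector $\int_{K_v}(\sigma_v^\vee(k_v)w_v^\vee,w_v)\,\pi_v(k_v)f_v^\vee\,dk_v \in \pi_v^\vee$, which is (up to scalar) the projection of $f_v^\vee$ onto its $\sigma_v^\vee$-isotypic part realized via the embedding dual to $\iota_{\pi,v}$.

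The second step is to apply the argument behind \autoref{cor:evaluation_nonunique_local_zeta_int} with the roles of $\pi_v$ and $\pi_v^\vee$ interchanged: the hypothesis of \autoref{cor:evaluation_nonunique_local_zeta_int} that $k \mapsto \delta_*\Phi(\iota(k^{-1},g),s)$ is a matrix coefficient of $\sigma_v^\vee$ is exactly what has been engineered by averaging $\Phi$ against $(\sigma_v^\vee(k)w_v^\vee,w_v)$. That corollary, via the preceding Lemma, shows that the functional $v' \mapsto Z(v',f_v,\delta_*\Phi_{w_v^\vee,w_v},\chi_{V,v},s)$ (as $v'$ ranges over $\pi_v^\vee$) is, on the $\sigma_v^\vee$-isotypic component, proportional to the linear functional $v' \mapsto (\iota_{\pi,v}(w_v),v')$ — this is the place where the multiplicity-one assumption for $\sigma_v$ in $\pi_v$ enters, guaranteeing that the relevant $\mathrm{Hom}_{K_v}$-space is one-dimensional so the two $K_v$-invariant functionals are forced to be scalar multiples of each other. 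Consequently $Z(f_v^\vee,f_v,\delta_*\Phi_{w_v^\vee,w_v},\chi_{V,v},s) = (\iota_{\pi,v}(w_v),f_v^\vee)\cdot(\text{something independent of }f_v^\vee)$, and the hypothesis $(\iota_{\pi,v}(w_v),f_v^\vee)=0$ makes the whole expression vanish.

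The main obstacle is bookkeeping rather than conceptual: one must check carefully that the averaging over $K_v$ built into $\Phi_{w_v^\vee,w_v}$ interacts correctly with the intertwiner $\delta_*$ and the embedding $\iota$, so that after the change of variables the $K_v$-dependence genuinely collapses to the single matrix coefficient $(\sigma_v^\vee(k)w_v^\vee,w_v)$ with no leftover $k$-dependence hiding in $\delta_*\Phi(\iota(1,g),s)$; the identity \eqref{eq:Phi_delta_inv} together with \eqref{eq:Phi_w,wvee_invariance_2} should handle this. A secondary subtlety, in the archimedean case, is convergence and continuity: one needs $\mathrm{Re}(s)$ large enough (as in part (1) of \autoref{thm:main_thm_local_zeta_int}) for the manipulations to be legitimate, and then meromorphic continuation propagates the identity — but since the statement is an identity of meromorphic functions and both sides continue, proving it on the domain of absolute convergence suffices.
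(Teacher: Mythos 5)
Your proposal is correct and takes essentially the same route as the paper: both perform the same $K_v$-averaging and change of variables to reduce the zeta integral to a Schur-orthogonality computation against the $\sigma_v^\vee$-isotypic component of $f_v^\vee$, which is then killed by the hypothesis $(\iota_{\pi,v}(w_v),f_v^\vee)=0$. You are somewhat more explicit than the paper (which compresses the final step to ``the result follows from Schur orthogonality'') about the fact that the local multiplicity-one consequence of \eqref{eq:global_mult_one_assumption} is what forces the proportionality, and you phrase the reduction through the preceding lemma and \autoref{cor:evaluation_nonunique_local_zeta_int} instead of applying Schur orthogonality directly; apart from a couple of notational slips ($\pi_v(k_v)f_v^\vee$ should be $\pi_v^\vee(k_v^{-1})f_v^\vee$, and $\iota(k_v,1)^{-1}g$ conflates an element of $\tilde G$ with one of $G$), the substance matches the paper's argument.
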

\begin{proof}
It suffices to prove this identity for $Re(s) \gg 0$, where
\[
Z(f_v^\vee,f_v,\delta_*\Phi_{w_v^\vee,w_v},\chi_{V,v},s)=\int_{G(F_v)} (\pi(g)f_v,f_v^\vee) \cdot \Phi_{w_v^\vee,w_v}(\delta \iota(1,g),s)dg
\]
and the integral converges absolutely. Changing variables ($g=k^{-1} g'$) and integrating over $K_v$ shows that it suffices to prove that the integral
\[
\int_{K_v} (\pi(g)f_v,\pi^\vee(k) f_v^\vee) \cdot \Phi_{w_v^\vee,w_v}(\delta \iota(1,k^{-1}g),s)dk.
\]
vanishes for every $g \in G(F_v)$. By \eqref{eq:Phi_delta_inv} and \eqref{eq:Phi_w,wvee_invariance_2}, we have
\[
\Phi_{w_v^\vee,w_v}(\delta \iota(1,k^{-1}g),s)=\Phi_{w_v^\vee,w_v}(\delta\iota(k,g),s)=\Phi_{w_v^\vee,\sigma_v(k)w_v}(\delta\iota(1,g),s).
\]
For fixed $g$ and $s$, this expression is a matrix coefficient of $\sigma_v$, of the form $(\sigma_v(k)w_v,\alpha)$, with $\alpha=\alpha_{g,s} \in \sigma_v^\vee$. The result follows from Schur orthogonality.
\end{proof}

Let $v$ be a place of $F$ and let $f_v^\vee \in \pi_v^\vee$ and $w_v \in \sigma_v$ such that $(\iota_{\pi,v}(w_v),f_v^\vee)=1$. Then \autoref{lemma:int_independence} implies that the normalized zeta integral $Z^0(f_v^\vee,f_v,\delta_*\Phi_{w_v^\vee,w_v},\chi_{V,v},s)$ (for $s$ outside the set of poles of $d(\chi_{V,v},s)$) given by \eqref{eq:normalized_zeta_int} does not depend on the choice of $f_v^\vee$ and defines a functional
\begin{equation}
Z^0_{\iota_v(w_v)} \in Hom_{K_v \times G(F_v)}(\pi_v \otimes \sigma_v^\vee \otimes Ind_{P(F_v)}^{\tilde{G}(F_v)}(\chi_{V,v}|\cdot|^s),\mathbb{C}).
\end{equation}
Moreover, for $v$ outside a finite set of places $S$, the representation $\sigma_v$ is trivial and $\pi_v$ is unramified. For such $v$, if we assume that $(w_v,w_v^\vee)=1$, that the section $\delta_*\Phi_v$ equals the unramified section $\Phi^0_v$ described in \autoref{thm:main_thm_local_zeta_int}.(3) and that $f_v$, $f_v^\vee$ are unramified vectors such that $(f_v,f_v^\vee)=1$, then
\begin{equation}
Z^0(f_v^\vee,f_v,\delta_*\Phi_{w_v^\vee,w_v},\chi_{V,v},s)=1
\end{equation}
by \eqref{eq:proj_unramified_section}. It follows that for any $w=\otimes_v w_v \in \sigma$, the product
\begin{equation}
Z^0_{\iota_\pi(w)}=\prod_v Z^0_{\iota_{\pi,v}(w_v)}
\end{equation}
defines a linear functional in $Hom_{K \times G(\mathbb{A})}(\pi \otimes \sigma^\vee \otimes Ind_{P(\mathbb{A})}^{\tilde{G}(\mathbb{A})}(\chi_V|\cdot|^s),\mathbb{C})$.

Under our muliplicity one assumption, any other $K$-equivariant embedding  $\iota'_\pi:\sigma \hookrightarrow \pi$ is of the form $\iota'_\pi=c \cdot \iota_\pi$ for some $c \in \mathbb{C}^\times$. For such $\iota'_\pi$, we have
\begin{equation} \label{eq:Z_functional_invariance}
Z^0_{\iota'_\pi(w)}=c^{-1} \cdot Z^0_{\iota_\pi(w)}.
\end{equation}

To compare the global functionals $Z_w$ and $Z^0_{\iota_\pi(w)}$, we need to introduce a global $L$-function $\Lambda(\pi,\chi_V,s)$. Recall that in \autoref{subsection:Local_zeta_integrals} we have defined local $L$-factors $L(\pi_v,\chi_{V,v},s)$ and $d(\chi_{V,v},s)$ for each place $v$. Define the completed global $L$-function
\begin{equation} \label{eq:def_global_L}
\Lambda(\pi,\chi_V,s)=\prod_v L(\pi_v,\chi_{V,v},s).
\end{equation}
This product converges for $Re(s)>r+1$ and admits meromorphic continuation to $s \in \mathbb{C}$ and a functional equation (see \cite{PSRbook, KudlaRallisPoles}). 
Define also
\begin{equation} \label{eq:def_global_d}
d(\chi_V,s)=\prod_v d(\chi_{V,v},s).
\end{equation}
Denote by $\Lambda(\chi_V,s)$ the completed Hecke $L$-function of $\chi_V$ and by $\Lambda_F(s)$ the completed Dedekind zeta function of $F$. Since $\chi_V$ is a quadratic character, we have
\begin{equation} \label{eq:global_d_computation}
d(\chi_V,s)=\Lambda(\chi_V,s+r+1/2) \cdot \prod_{i=1}^r \Lambda_F(2s+2i-1).
\end{equation}

We can now state the main result of this section showing that the functionals $Z_w$ and $Z^0_{\iota_\pi(w)}$ are proportional. Note that by \eqref{eq:Z_functional_invariance}, both sides of the identity in the following Proposition are independent of the embedding $\iota_\pi$.

\begin{proposition} \label{proposition:Euler_product}
Assume that $\dim_\mathbb{C} Hom_K(\sigma,\pi)=1$ and choose a $K$-equivariant embedding $\iota_\pi=\otimes_v \iota_{\pi,v}:\sigma \hookrightarrow \pi$. Let $w=\otimes_v w_v \in \sigma$ and denote by $f_{\iota_\pi(w)}$ the automorphic form in $\pi$ corresponding to $\iota_\pi(w)$. For any $g' \in Sp_{2r}(\mathbb{A}_F)$ we have
\begin{equation}
Z_w(g')=f_{\iota_\pi(w)}(g') \cdot \frac{\Lambda(\pi^\vee,\chi_V,s+1/2)}{d(\chi_V,s)} \cdot Z^0_{\iota_\pi(w)}
\end{equation} 
as functionals in $Hom_{K \times G(\mathbb{A})}(\pi \otimes \sigma^\vee \otimes Ind_{P(\mathbb{A})}^{\tilde{G}(\mathbb{A})}(\chi_V,s),\mathbb{C})$.
\end{proposition}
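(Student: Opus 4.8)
The plan is to prove \autoref{proposition:Euler_product} by the doubling method of Piatetski-Shapiro and Rallis; beyond the classical case the only extra work is the bookkeeping forced by the $K$-type $\sigma$ and the averaged section $\Phi_{w^\vee,w}$. First I would pass to the region $\mathrm{Re}(s)\gg 0$, where the Eisenstein series \eqref{eq:def_Eisenstein_series} and all integrals below converge absolutely; the identity of functionals for general $s$ then follows by meromorphic continuation. Since all functionals involved are defined on the restricted tensor products $\pi\otimes\sigma^\vee\otimes Ind_{P(\mathbb{A})}^{\tilde G(\mathbb{A})}(\chi_V|\cdot|^s)$, multilinearity reduces us to pure tensors $f=\otimes_v f_v$, $w=\otimes_v w_v$, $w^\vee=\otimes_v w_v^\vee$, $\Phi=\otimes_v\Phi_v$, while independence of the choice of $\iota_\pi$ is \eqref{eq:Z_functional_invariance}. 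The heart of the argument is the unfolding of the Eisenstein series in $Z_w(g')=\int_{G(F)\backslash G(\mathbb{A})}f(g)\,E(\iota(g',g),\Phi_{w^\vee,w},s)\,dg$. Because $\tilde P$ is the stabilizer in $\tilde G$ of the diagonal maximal isotropic subspace $W^d\subset\tilde W$, because $\delta\tilde P\delta^{-1}=P$ for $\delta$ as in \eqref{eq:def_delta}, and because $(G\times G)\cap\tilde P=\{(g,g)\}$ (see \autoref{subsection:Local_zeta_integrals}), the group $\iota(G\times G)$ has a unique open orbit on $P(F)\backslash\tilde G(F)$, represented by $\delta$ and with stabilizer the diagonal copy of $G(F)$; the lower-dimensional orbits contribute nothing after integration against the cusp form $f$, since their stabilizers in the second factor contain unipotent radicals of proper $F$-parabolic subgroups of $G$. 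Unwinding the open-orbit term and invoking \eqref{eq:Phi_delta_inv} produces the basic identity $Z_w(g')=\int_{G(\mathbb{A})}f(g)\,\Phi_{w^\vee,w}(\delta\,\iota(g',g),s)\,dg$.

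Next I would identify the function $g'\mapsto Z_w(g')$. By \eqref{eq:Phi_delta_inv} it is left $G(F)$-invariant; it is of moderate growth because $f$ is rapidly decreasing while $\Phi_{w^\vee,w}$ has moderate growth; and it is $K$-finite because \eqref{eq:Phi_w,wvee_invariance_2} gives $r(k)Z_w=Z_{\sigma(k)w}$ for $k\in K$ with $\sigma$ finite-dimensional. So it is an automorphic form, and by the standard analysis of doubling integrals it is in fact a cusp form lying in the space of $\pi$: it is orthogonal to every cuspidal $\pi'\not\cong\pi$, because after unfolding the Petersson pairing $\langle Z_w,\phi'\rangle$ against a cusp form $\phi'\in(\pi')^\vee$ one is left with an integral of Petersson pairings $\langle f,(\pi')^\vee(g)\phi'\rangle$, which all vanish. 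The relation $r(k)Z_w=Z_{\sigma(k)w}$ says that $w\mapsto Z_w$ is a $K$-equivariant map $\sigma\to\pi$, so by the multiplicity one hypothesis \eqref{eq:global_mult_one_assumption} it is a scalar multiple of $\iota_\pi$; that is, $Z_w(g')=c\cdot f_{\iota_\pi(w)}(g')$ for a constant $c=c(f,w^\vee,\Phi,s)$.

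It remains to compute $c$ and to recognize it as an Euler product. Choose $f^\vee=\otimes_v f_v^\vee\in\pi^\vee$ with $(\iota_{\pi,v}(w_v),f_v^\vee)=1$ at every place, so that $(\iota_\pi(w),f^\vee)=1$; then $c\cdot\langle f_{\iota_\pi(w)},f^\vee\rangle=\langle Z_w,f^\vee\rangle=\int_{G(F)\backslash G(\mathbb{A})}\int_{G(F)\backslash G(\mathbb{A})}f(g)f^\vee(g')\,E(\iota(g',g),\Phi_{w^\vee,w},s)\,dg\,dg'$, the full two-variable doubling integral. Unfolding it exactly as before and collapsing the double period by \eqref{eq:Phi_delta_inv}, and noting that the factor relating the Petersson pairing to the canonical pairing on $\pi\otimes\pi^\vee$ occurs on both sides and so drops out, one gets $c=\int_{G(\mathbb{A})}(\pi^\vee(g)f^\vee,f)\,\Phi_{w^\vee,w}(\delta\,\iota(1,g),s)\,dg$. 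For pure tensors the matrix coefficient and the section both factor over the places, hence $c=\prod_v Z(f_v^\vee,f_v,\delta_*(\Phi_v)_{w_v^\vee,w_v},\chi_{V,v},s)$, and by part (3) of \autoref{thm:main_thm_local_zeta_int} together with \eqref{eq:proj_unramified_section} all but finitely many of these factors equal $L(\pi_v^\vee,\chi_{V,v},s+\tfrac12)/d(\chi_{V,v},s)$. Writing each local factor as $\bigl(L(\pi_v^\vee,\chi_{V,v},s+\tfrac12)/d(\chi_{V,v},s)\bigr)\cdot Z^0_{\iota_{\pi,v}(w_v)}$ — legitimate by \autoref{lemma:int_independence} and the definition \eqref{eq:normalized_zeta_int} — and multiplying over $v$ via \eqref{eq:def_global_L} and \eqref{eq:def_global_d}, one obtains $c=\frac{\Lambda(\pi^\vee,\chi_V,s+1/2)}{d(\chi_V,s)}\cdot Z^0_{\iota_\pi(w)}$. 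Combined with $Z_w(g')=c\cdot f_{\iota_\pi(w)}(g')$ this is the assertion.

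I expect the main obstacle to be the two unfoldings and the structural facts beneath them: that $\delta$ genuinely represents the open $\iota(G\times G)$-orbit on $P(F)\backslash\tilde G(F)$ with diagonal stabilizer, that the lower-dimensional orbits are annihilated by the cuspidality of $f$, and — the subtlest point — that $Z_w(\cdot)$ really is a cusp form lying in $\pi$, which is what licenses the appeal to multiplicity one. All of this is the technical core of the doubling method and is available in the literature (\cite{PSRbook} and the references collected in \autoref{subsection:Local_zeta_integrals}); what is genuinely new here is only the linear bookkeeping imposed by the averaging $\Phi\mapsto\Phi_{w^\vee,w}$ over the $K$-type $\sigma$ and by the normalization \eqref{eq:normalized_zeta_int}, together with the routine unramified computation recalled in \autoref{thm:main_thm_local_zeta_int}.
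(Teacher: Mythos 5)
Your proof takes a genuinely different route from the paper's, so let me compare the two.

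The paper unfolds $Z_w(g')$ to $\int_{G(\mathbb{A})}f(g'g)\,\Phi_{w^\vee,w}(\delta\iota(1,g),s)\,dg$ and then evaluates this integral directly, one place at a time, by applying \autoref{cor:evaluation_nonunique_local_zeta_int}: at each $v\in S$ the function $g_v\mapsto f(g'g^v g_v)$ is a continuous functional $l$ on $\pi_v$, and the corollary replaces $l$ by the matrix coefficient $(\,\cdot\,,f_v^\vee)$ at the cost of the scalar $l(\iota_{\pi,v}(w_v))$. Iterating over $S$ produces the factor $f_{\iota_\pi(w)}(g')$ and the Euler product simultaneously. Your proof instead first argues that $g'\mapsto Z_w(g')$ is an automorphic form which, being (you claim) cuspidal and orthogonal to all $\pi'\not\cong\pi$, must lie in $\pi$; then the $K$-equivariance $r(k)Z_w=Z_{\sigma(k)w}$ plus \eqref{eq:global_mult_one_assumption} forces $Z_w=c\,f_{\iota_\pi(w)}$, after which $c$ is evaluated by pairing against $f^\vee$ and unfolding the resulting two-variable doubling integral. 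This is conceptually attractive (it explains the appearance of $f_{\iota_\pi(w)}$ structurally rather than by bookkeeping), and the final computation of $c$ is the classical doubling unfolding, so the Euler-product step is essentially the same as in the paper.

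However, there is a real gap: the assertion that $Z_w$ is a \emph{cusp form} is neither obvious nor, as far as I can see, available ``by the standard analysis of doubling integrals.'' You do establish moderate growth, left $G(F)$-invariance and $K$-finiteness, and your orthogonality argument against $\pi'\not\cong\pi$ is sound; but none of that addresses the vanishing of the constant terms of $Z_w$ along the proper parabolics of $G$. Without cuspidality, orthogonality to the other cuspidal spectra only controls the cuspidal projection of $Z_w$, and the pointwise identity $Z_w(g')=c\,f_{\iota_\pi(w)}(g')$ --- which is exactly what \autoref{proposition:Euler_product} asserts --- does not follow. (Cuspidality is of course \emph{a consequence} of the proposition, but you cannot use it as an input.) One could presumably establish it via the Siegel--Weil formula, interpreting $Z_w$ as a theta lift back and forth, but that is a substantive extra step. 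The paper's route --- compute the unfolded integral directly by iterating the local \autoref{lemma:int_independence}/\autoref{cor:evaluation_nonunique_local_zeta_int}, which rests only on the local multiplicity-one statements implied by \eqref{eq:global_mult_one_assumption} --- sidesteps the issue entirely. If you want to keep your structural approach, you should either supply a proof of the cuspidality of $Z_w$ or reformulate the argument so that it only identifies the cuspidal projection of $Z_w$ and then shows separately that $Z_w$ has no other component.
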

\begin{proof}
It suffices to prove the identity for standard $\Phi$ and $Re(s)>(r+1)/2$, where the Eisenstein series $E(g,\Phi,s)$ is defined by the sum \eqref{eq:def_Eisenstein_series}. For such $s$, the evaluation of the left hand side proceeds, as in the usual doubling method, by analyzing the orbits of $G(F) \times G(F)$ on $P(F) \backslash \tilde{G}(F)$. One finds (see e.g. \cite[Prop. 2.1]{GelbartPSRallis}) that there are $r+1$ orbits and that, due to the cuspidality of $f$, only one of them gives a non-zero contribution. Moreover, let $\delta \in \tilde{G}(F)$ be as in \eqref{eq:def_delta}.
Then $\delta$ is a representative of the non-negligible orbit such that $Stab_{G \times G}(P(F)\delta)=\Delta G$, where $\Delta G=\{(g,g)| g \in G \} \subset G \times G$ (see \cite{KudlaRallis2005}). Unfolding the integral we obtain
\[
\int_{G(F) \backslash G(\mathbb{A}_F)} f(g) \cdot E(\iota(g',g),\Phi_{w^\vee,w},s)dg=\int_{G(\mathbb{A}_F)} f(g) \cdot \Phi_{w^\vee,w}(\delta \iota(g',g),s)dg.
\]
A change of variables ($g=g'g''$) shows that this equals
\begin{equation}\label{eq:proof_thm_3/2_eq1}
\int_{G(\mathbb{A}_F)} f(g'g) \cdot \Phi_{w^\vee,w}(\delta \iota(1,g),s)dg
\end{equation}
by \eqref{eq:Phi_delta_inv}. Let $f=\otimes_v f_v$ in $\pi$, $w^\vee=w_v^\vee \in \sigma^\vee$ and $\Phi=\otimes_v \Phi_v \in Ind_{P(\mathbb{A}_F)}^{\tilde{G}(\mathbb{A}_F)}(\chi_V |\cdot|^s)$ and choose $f^\vee=\otimes_v f_v^\vee \in \pi^\vee$ such that $(\iota_{\pi,v}(w_v),f_v^\vee)=1$ for every place $v$. To prove the claim, we need to show that
\begin{equation*}
\eqref{eq:proof_thm_3/2_eq1}=f_{\iota_\pi(w)}(g') \cdot \frac{\Lambda(\pi^\vee,\chi_V,s+1/2)}{d(\chi_V,s)} \cdot \prod_v Z^0(f_v^\vee,f_v,\delta_*\Phi_{w_v^\vee,w_v},\chi_{V,v},s).
\end{equation*}
%
Fix a finite set of places $S_0$ such that for $v \notin S_0$ all the data are unramified. More precisely, assume that for such $v$ the representation $\pi_v$ is unramified; the vector $f_v \in \pi_v$ is fixed by $G(\mathcal{O}_v)$; the representation $\sigma_v$ is trivial and we have $\iota_{\pi,v}(w_v)=f_v$ and $(w_v,w_v^\vee)=1$; $\Phi_v=\Phi^0_v$ with $\Phi^0_v$ as in \autoref{thm:main_thm_local_zeta_int}; and $\chi_{V,v}$ is unramified. For a finite set of places $S$ containing $S_0$, write
\begin{equation}
G(\mathbb{A}_S)=\prod_{v \in S} G(F_v) \times \prod_{v \notin S} G(\mathcal{O}_v).
\end{equation}
Then $G(\mathbb{A}_F) = \varinjlim_S G(\mathbb{A}_S)$ where the limit is over a family of $S \supset S_0$ whose union is the set of all places of $F$. Hence
\begin{equation*}
\int_{G(\mathbb{A}_F)} f(g'g) \Phi_{w^\vee,w}(\delta \iota(1,g),s) \ dg=\varinjlim_{S} \int_{G(\mathbb{A}_S)} f(g'g) \Phi_{w^\vee,w}(\delta \iota(1,g),s) \ dg
\end{equation*}
and it suffices to show that
\begin{equation*}
\int_{G(\mathbb{A}_S)} f(g'g) \Phi_{w^\vee,w}(\delta \iota(1,g),s) \ dg=f_{\iota_\pi(w)}(g') \cdot \prod_{v \in S} Z(f^\vee_v,f_v,\delta_*\Phi_{w_v^\vee,w_v},\chi_{V,v},s)
\end{equation*}
for $S \supset S_0$. For a place $v$ of $F$, write $G(\mathbb{A}^v_F)=\Pi_{w \neq v}' G(F_w)$, so that $G(\mathbb{A}_F)=G(F_v)\cdot G(\mathbb{A}^v_F)$. For fixed $g'$ and $g^v$ in $G(\mathbb{A}^v_F)$, we have $f(g'g^v g_v)=l(\pi(g_v)f_v)$ for some linear functional $l:\pi_v \rightarrow \mathbb{C}$; if $v$ is archimedean, then this functional $l$ is continuous for the Fr\'echet topology on $\pi_v$, since it is given by evaluation of a $\mathcal{C}^\infty$ automorphic form. Moreover, by \eqref{eq:Phi_delta_inv} and \eqref{eq:Phi_w,wvee_invariance_2} we have
\[
\Phi_{w_v^\vee,w_v}(\delta \iota(1,k_v g_v),s)=\Phi_{w_v^\vee,w_v}(\delta \iota(k_v^{-1},g_v),s)=\Phi_{w_v^\vee,\sigma_v(k_v)^{-1}w_v}(\delta \iota(1,g_v),s)
\]
and hence, for fixed $g_v$ and $s$, the section $\Phi_{w_v^\vee,w_v}(\delta(1,k_v g_v),s)$ is a matrix coefficient of $\sigma_v^\vee$, of the form $\alpha(\sigma_v(k_v)^{-1}w_v)$ with $\alpha=\alpha_{g_v,s} \in \sigma_v^\vee$. The result now follows from \autoref{cor:evaluation_nonunique_local_zeta_int} applied to each place $v \in S$.
%
%
\end{proof}

The statement of \autoref{thm:Rallis_pairing} will involve the value of the completed $L$-function appearing in the above proposition at a certain value $s_0$. The following proposition shows that for certain representations $\pi$ this value is finite.

\begin{proposition} \label{prop:completed_L_fcn_regular}
Let $\pi \subset \mathcal{A}_0(Sp_{4,F})$ be an irreducible cuspidal automorphic representation. Assume that there exists an anisotropic quadratic vector space $V$ over $F$ of even dimension $m \geq 4$ such that the global theta lift $\Theta(\pi)$ contains a non-zero cusp form on $O(V)(\mathbb{A})$. Then the meromorphic function
\begin{equation*}
\frac{\Lambda(\pi^\vee,\chi_V,s+1/2)}{d(\chi_V,s)}
\end{equation*}
is regular and non-vanishing at $s_0=(m-5)/2$.
\end{proposition}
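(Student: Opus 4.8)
\emph{Strategy.} The plan is to read off the statement from the Rallis inner product formula, which in the present situation is available by combining the Siegel--Weil formula of \autoref{thm:SiegelWeil} with the doubling identity behind \autoref{proposition:Euler_product}. Since $r=2$, the relevant doubled group is $\tilde G=Sp_{8,F}$, and the point $s_0=(m-5)/2$ is exactly the Siegel--Weil point for the dual pair $(Sp_8,O(V))$ (apply \autoref{thm:SiegelWeil} with ``$r$'' replaced by $4$). The key mechanism is that the Petersson pairing on the compact quotient $[O(V)]$ is positive definite, so the value at $s_0$ of the normalized doubling integral attached to a nonzero theta lift of $\pi$ is a strictly positive real number; expanding that value by the doubling method exhibits it as $\frac{\Lambda(\pi^\vee,\chi_V,s+1/2)}{d(\chi_V,s)}\big|_{s=s_0}$ times nonzero local factors, which forces that quotient to be finite and nonzero at $s_0$.

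\emph{Carrying this out.} By hypothesis there are a factorizable Schwartz form $\varphi=\otimes_v\varphi_v\in\mathcal S(V(\mathbb A_F)^2)$ and a cusp form $f=\otimes_v f_v\in\pi$ with $\theta_\varphi(f)\neq 0$ on $O(V)(\mathbb A)$; in particular every local theta lift $\theta(\pi_v)$ is nonzero. First I would unfold $\langle\theta_\varphi(f),\theta_\varphi(f)\rangle_{O(V)}$ by the seesaw relating $(O(V)\times O(V),Sp_4)$ to $(O(V),Sp_4\times Sp_4\subset Sp_8)$: the inner integral over $[O(V)]$ becomes a theta integral on $Sp_8$, and by \autoref{thm:SiegelWeil} it equals $\kappa_m^{-1}$ times a value of the Siegel--Weil Eisenstein series $E(g,\Phi_{SW},s_0)$ on $Sp_8$, which is \emph{holomorphic} at $s_0$ since $V$ is anisotropic (\autoref{thm:SiegelWeil}(i)). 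Hence the resulting doubling integral of $\overline f\otimes f$ against $E(\cdot,\Phi_{SW},s)$ is finite at $s=s_0$. On the other hand, the doubling computation behind \autoref{proposition:Euler_product} (in its non-averaged form, with $\Phi_{SW}=\otimes_v\Phi_{SW,v}$ a factorizable good section and $\delta$ as in \eqref{eq:def_delta}) identifies this integral, up to the nonzero constant $\kappa_m^{-1}$ and the value of a cusp form, with
\[
\left.\frac{\Lambda(\pi^\vee,\chi_V,s+\tfrac12)}{d(\chi_V,s)}\right|_{s=s_0}\cdot\prod_v Z^0_v\bigl(f_v^\vee,f_v,\delta_*\Phi_{SW,v},\chi_{V,v},s_0\bigr),
\]
the local factors being $1$ for almost all $v$. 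Since each $\theta(\pi_v)$ is nonzero, the local Rallis inner product formula --- equivalently the local doubling/Siegel--Weil analysis of \cite{LapidRallis,Yamana} --- gives $Z^0_v(\ldots,s_0)\neq 0$ for all $v$; these factors are moreover finite at $s_0$ (unconditionally when $s_0>0$, i.e.\ $m\geq 6$, since $s_0$ is then not a pole of any local factor of $d(\chi_{V,v},s)$; for $m=4$ see below). Combining, $\frac{\Lambda(\pi^\vee,\chi_V,s+1/2)}{d(\chi_V,s)}$ is regular at $s_0$ and equals a strictly positive multiple of $\langle\theta_\varphi(f),\theta_\varphi(f)\rangle_{O(V)}>0$ divided by these nonzero local factors, hence is nonzero.

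\emph{The denominator and the case $m=4$.} From \eqref{eq:global_d_computation} with $r=2$ one has $d(\chi_V,s_0)=\Lambda(\chi_V,\tfrac m2)\,\Lambda_F(m-4)\,\Lambda_F(m-2)$. For $m\geq 6$ the three arguments $\tfrac m2,\,m-4,\,m-2$ are all $\geq 2$, and neither $\Lambda_F$ nor $\Lambda(\chi_V,\cdot)$ has a pole or a zero with real part $>1$, so $d(\chi_V,s)$ is holomorphic and nonzero at $s_0$ and the previous paragraph gives the claim directly. For $m=4$, $s_0=-\tfrac12$ and the factor $\Lambda_F(m-4)=\Lambda_F(0)$ contributes a simple pole with nonzero residue; correspondingly $\Lambda(\pi^\vee,\chi_V,s+\tfrac12)$ must have a simple pole at $s_0$ cancelling the denominator (this cancellation, together with the finiteness at $s_0$ of the factors $Z^0_v$, being exactly what makes the doubling identity compatible with the holomorphy of $E(\cdot,\Phi_{SW},s)$ there), and the resulting value --- a ratio of residues --- is again nonzero by positivity. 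This case-free conclusion can alternatively be extracted from the regularized Siegel--Weil formula \cite{KudlaRallis1} together with the comparison of the doubling $L$-function with the poles of the Eisenstein series.

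\emph{Main obstacle.} The delicate point is the boundary range, above all $m=4$: there the Siegel--Weil formula lies outside Weil's convergent range, and both $d(\chi_V,s)$ and the local factors $Z^0_v$ can a priori be singular at $s_0$, so one must control the order of the pole of $\Lambda(\pi^\vee,\chi_V,s+1/2)$ at $s_0$ exactly. This is where the classification of the poles of doubling $L$-functions in terms of first occurrence in the Witt tower \cite{KudlaRallisPoles,Yamana} is essential: the hypothesis that the theta lift to $O(V)$ is a nonzero \emph{cusp form} means, by Rallis's tower property, that $\dim V=m$ is the first occurrence of $\pi$, and this pins down the order of the pole of the standard $L$-function at $s_0$.
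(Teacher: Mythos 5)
Your overall strategy differs from the paper's in a substantive way. Both proofs use the Rallis inner product formula for the non-vanishing statement, but for \emph{regularity} the paper proceeds directly: it invokes Moeglin's involution theorem ($\tau=\Theta(\pi)$ is cuspidal irreducible and $\pi=\Theta(\tau)$) together with Yamana's classification of the possible poles of $\Lambda(\pi^\vee,\chi_V,s)$ (they can only occur at $s\in\{-1,0,1,2\}$, and a pole at a given location forces $V$ to lie in a specific Witt tower, which is impossible for anisotropic $V$ when $m\in\{6,8\}$), and for $m=4$ it shows $\Lambda(\pi,\chi_V,s)$ has precisely a simple pole at $s=1$ by combining Yamana's non-vanishing criterion for $L^S(\tau,\cdot)$ at $1$ with the Kudla--Rallis pole criterion. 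Your route instead tries to read off regularity from the fact that $Z(s)$ is holomorphic at $s_0$ (by the Siegel--Weil formula) and positive there.

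The gap is in the step where you conclude regularity of $\Lambda/d$ at $s_0$ from the finiteness of $Z(s_0)$ and of the local factors $Z^0_v(s_0)$. The identity $Z(s)=\bigl(\Lambda/d\bigr)(s)\cdot\prod_v Z^0_v(s)$ only gives regularity of $\Lambda/d$ at $s_0$ if you also know $\prod_v Z^0_v(s_0)\neq 0$: a pole of $\Lambda/d$ would be perfectly compatible with a finite positive $Z(s_0)$ and finite $Z^0_v$, provided the product $\prod_v Z^0_v$ vanishes at $s_0$ to matching order. You assert that non-vanishing of the local theta lifts $\theta(\pi_v)$ forces $Z^0_v(s_0)\neq 0$ for all $v$ and for the \emph{given} local data, but the local Siegel--Weil/doubling analysis only guarantees the existence of \emph{some} test vectors making $Z^0_v(s_0)\neq 0$; for a fixed factorizable $(f,\varphi)$ chosen solely to make the global $\theta_\varphi(f)$ nonzero, individual local factors may vanish at $s_0$. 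You would need either a test-vector argument (choosing $\varphi$ so that simultaneously $\theta_\varphi(f)\neq 0$ and each $Z^0_v(s_0)\neq 0$) or the independent pole-classification input the paper uses. For $m=4$ the problem is worse: $s_0=-1/2$ lies on the set of possible poles of the local $d$-factors, so the good-section theory no longer gives finiteness of $Z^0_v(s_0)$ for free, and the paragraph you offer there is circular (it derives finiteness of $Z^0_v(s_0)$ from compatibility with holomorphy of the Eisenstein series, which is precisely what needs showing). The closing appeal to first occurrence via the tower property is a good instinct — it is essentially how the paper pins down the order of the pole — but it is not carried out; the paper makes this precise by combining Moeglin's involution with Yamana's Witt-tower description of poles of the doubling $L$-function. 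The non-vanishing half of your argument (positivity of the Petersson norm plus holomorphy of $\prod_v Z^0_v$ at $s_0$ forces $\bigl(\Lambda/d\bigr)(s_0)\neq 0$) is fine for $m\geq 6$, and is parallel in spirit to the paper's use of the Rallis inner product formula together with the elementary fact that the local ratios $d(\chi_{V,v},s)^{-1}L(\pi_v^\vee,\chi_{V,v},s+\tfrac12)$ never vanish.
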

\begin{proof}
By the main theorem in \cite{MoeglinInvolution}, the global theta lift $\tau=\Theta(\pi)$ is cuspidal and irreducible and satisfies $\pi=\Theta(\tau)$. Consider first the case $m=4$. Then $d(\chi_V,s)$ has a simple pole at $s_0=-1/2$. By the functional equation of $\Lambda(\pi^\vee,\chi_V,s+1/2)$ (see \cite[\textsection 10]{LapidRallis}), it suffices to show that $\Lambda(\pi,\chi_V,s)$ has a simple pole at $s=1$. Since $\Lambda(\pi,\chi_V,s)$ has at most simple poles (see \cite[Thm 9.1]{Yamana}), it suffices to show that the incomplete $L$-function $L^S(\pi,\chi_V,s)$ has a simple pole at $s=1$ for some finite set of places $S$. But the cuspidality of $\Theta(\tau)$ implies that $L^S(\tau,s)$ does not vanish at $s=1$ (see \cite[Thm. 2]{Yamana}) and hence $L^S(\pi,\chi_V,s)$ has a simple pole at $s=1$ by \cite[Cor. 7.1.5]{KudlaRallisSiegel1} as required.

Assume now that $m > 4$; in this case $d(\chi_V,s)$ is regular at $s_0$. We first prove that $\Lambda(\pi^\vee,\chi_V,s+1/2)$ has no pole at $s_0$. Note that $\Lambda(\pi^\vee,\chi_V,s)$ can only have poles at $s \in \{-1,0,1,2\}$ by \cite[Thm. 9.1]{Yamana}, hence we only need to consider $m \in \{6,8\}$. Suppose first that $m=8$; by the functional equation, it suffices to show that $\Lambda(\pi,\chi_V,s)$ is regular at $s=-1$. This follows from \cite[Thm 10.1]{Yamana}: if $\Lambda(\pi,\chi_V,s)$ had a pole at $s=-1$, then $V$ would be in the same Witt tower as a certain two-dimensional quadratic vector space over $F$; this is absurd since $V$ is anisotropic. The same argument works for $m=6$.

Finally, consider the non-vanishing at $s_0$. The Rallis inner product formula for $||\theta_\varphi(f)||^2$ with $f \in \overline{\pi}$ and $\varphi \in \mathcal{S}(V(\mathbb{A}^2))$ shows that there exists a finite set of places $S$ such that the incomplete function $d^S(\chi_V,s)^{-1}L^S(\pi^\vee,\chi_V,s+1/2)$ does not vanish at $s_0$ . The result follows since for any place $v$, the local functions $d(\chi_{V,v},s)^{-1}L(\pi_v^\vee,\chi_{V,v},s+1/2)$ do not vanish at $s_0$.
\end{proof}

\subsection{Representations of $SO(n,2)^+$ with non-vanishing cohomology} \label{subsection:reps_and_cohomology} Consider an irreducible unitary representation $(\pi,V_\pi)$ of $SO(n,2)^+$ and denote by $H^*_{cts}(SO(n,2)^+,V_\pi)$ the continuous cohomology of $SO(n,2)^+$ with coefficients in $V_\pi$ (see \cite{BorelWallach}). We say that it has non-vanishing cohomology if 
\begin{equation}
H^*_{cts}(SO(n,2)^+,V_\pi) \neq 0.
\end{equation}
The infinitesimal equivalence classes of such representations were classified in \cite{VoganZuckerman}. They are all of the form $A_\mathfrak{q}=A_{\mathfrak{q}}(0)$ where $\mathfrak{q}$ is a certain type of parabolic subalgebra of $\mathfrak{so}(n,2)_\mathbb{C}$ called a $\theta$-stable subalgebra. The algebra $\mathfrak{q}$ is determined by its normalizer $L$ in $SO(n,2)^+$, and the $(\mathfrak{so}(n,2)_\mathbb{C},K)$-module $A_\mathfrak{q}$ (here $K=SO(n) \times SO(2)$) is obtained by a process of cohomological induction from the trivial representation of $L$.

We are interested in representations that contribute non-trivial $(n-1,n-1)$-classes to the cohomology of $X_K$, that is, such that $H^{n-1,n-1}_{cts}(SO(n,2)^+,V_\pi) \neq 0$; we exclude the trivial representation. The results in \cite{VoganZuckerman} (see also \cite[\textsection 15.2]{Schwermer} for an explicit description in our case) show that for $n>2$ there is only one such representation up to infinitesimal equivalence. Its $(\mathfrak{so}(n,2)_\mathbb{C},K)$-module is of the form  $A_\mathfrak{q}$, with
\begin{equation}
L \cong U(1) \times SO(n-2,2)^+.
\end{equation}
Moreover, its Lie algebra cohomology is given by
\begin{equation}
H^{p,q}(\mathfrak{so}(n,2)_\mathbb{C},K,A_\mathfrak{q})= \left\{ \begin{array}{cc}  \mathbb{C}^2,& \text{ if } p=q=n, \\
\mathbb{C},& \text{ if } 0 < p=q < n \quad (p=q\neq n), \\
0,& \text{ otherwise.} \end{array}  \right.
\end{equation}
The representation $A_\mathfrak{q}$ extends to a representation of $SO(n,2)$ with non-vanishing $(\mathfrak{so}(n,2),S(O(n)\times O(2)))$ cohomology; we denote the extension still by $A_\mathfrak{q}$.

If $n=2$, there are two non-trivial representations $A_{\mathfrak{q}^+}$ and $A_{\mathfrak{q}^-}$ with non-vanishing $(1,1)$-cohomology; we have $L \cong U(1)\times U(1)$ and
\begin{equation}
H^{p,q}(\mathfrak{so}(n,2)_\mathbb{C},K,A_{\mathfrak{q}^{\pm}})= \left\{ \begin{array}{cc}  \mathbb{C},& \text{ if } p=q=1, \\
0,& \text{ otherwise.} \end{array}  \right.
\end{equation}
In fact this can be seen more directly using the special isomorphism
\begin{equation}
SO(2,2)^+ \cong \{\pm 1\} \backslash SL_2(\mathbb{R}) \times SL_2(\mathbb{R}).
\end{equation}
Namely, a $(\mathfrak{sl}_2,SO(2))$-module $\sigma$ associated with an irreducible unitary non-trivial representation of $SL_2(\mathbb{R})$ with non-vanishing cohomology has to be infinite-dimensional with zero eigenvalue for the Casimir element (see \cite[Thm. I.5.3]{BorelWallach}); thus $\sigma \in \{\mathcal{D}_2,\mathcal{D}_{-2}\}$, where $\mathcal{D}_k$ for positive (resp. negative) $k$ denotes the $(\mathfrak{sl}_2,SO(2))$-module with lowest (resp. highest) weight $k$ and trivial central character. We conclude from the K\"{u}nneth formula that the irreducible non-trivial representations with non-vanishing $(1,1)$-cohomology of $SO(2,2)^+$ are
\begin{equation}
\{A_{\mathfrak{q}^+},A_{\mathfrak{q}^-} \}=\{ \mathcal{D}_2 \boxtimes \mathcal{D}_{-2}, \mathcal{D}_{-2} \boxtimes \mathcal{D}_{2} \}.
\end{equation}

\subsubsection{Theta correspondence} The relationship between representations with non-vanishing cohomology and the theta correspondence (for type I dual pairs) was studied by Li \cite{LiCohomology}. His results show that the representations with non-vanishing $(n-1,n-1)$-cohomology discussed above all appear as restrictions to $SO(n,2)^+$ of theta lifts of discrete series on $SL_2(\mathbb{R})$.

\begin{proposition} \label{prop:Archimedean_theta_SL2} \cite[Thm. 6.2]{LiCohomology}
Let $n$ be a positive integer and let $\mathcal{D}_{n+1}$ be the discrete series representation of $SL_2(\mathbb{R})$ of lowest weight $n+1$. Consider the dual pair $(SL_2(\mathbb{R}),O(2n,2))$ and let $\pi=\theta(\mathcal{D}_{n+1})$ the theta lift of $\mathcal{D}_{n+1}$ to $O(2n,2)$. If $n>1$, then the restriction of $\pi$ to $SO(2n,2)$ is isomorphic to $A_\mathfrak{q}$. If $n=1$, then $\pi \cong \pi(2,-2)$, where the restriction of $\pi(2,-2)$ to $SO(2,2)^+$ equals $\mathcal{D}_2 \boxtimes \mathcal{D}_{-2} \oplus \mathcal{D}_{-2} \boxtimes \mathcal{D}_2$.
\end{proposition}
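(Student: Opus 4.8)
The plan is to deduce the proposition from Li's classification of theta lifts of cohomologically induced modules in \cite{LiCohomology}, the main work being a careful matching of conventions. First I would record the numerology for the dual pair $(SL_2(\mathbb{R}),O(2n,2))$: here $\dim W=2$, $\dim V=m=2n+2$, and $V$ has Witt index $2$, so for every $n\geq 1$ the pair lies in the range where Howe duality holds and Li's explicit description of the lift is available. In particular $\theta(\mathcal{D}_{n+1})$ is nonzero — it is the lift of a lowest weight holomorphic discrete series, which can also be seen directly from the Fock model — and, being nonzero, it is irreducible as a $(\mathfrak{so}(2n,2),S(O(2n)\times O(2)))$-module by Howe's theorem. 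I would stress that the weight $n+1$ equals $m/2$, the distinguished value at which the lowest weight vector of $\mathcal{D}_{n+1}$ is matched with the most degenerate vectors of the Weil representation (the $O(2n)\times O(2)$-spherical ones, up to the twist forced by the signature); this is what makes the lift cohomological rather than more singular or tempered.

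Next I would identify $\theta(\mathcal{D}_{n+1})$ on the nose. I would compute its infinitesimal character from the Harish-Chandra parameter of $\mathcal{D}_{n+1}$ via the standard transfer recipe for $(Sp_2,O_m)$ and check that it coincides with that of $A_\mathfrak{q}$, where $\mathfrak{q}\subset\mathfrak{so}(2n,2)_\mathbb{C}$ is the $\theta$-stable parabolic with Levi $L\cong U(1)\times SO(2n-2,2)^+$ recalled in \autoref{subsection:reps_and_cohomology} (with $n$ there replaced by $2n$), i.e. the unique nontrivial cohomological module carrying $(2n-1,2n-1)$-cohomology. Then I would pin down the lowest $K$-type of $\theta(\mathcal{D}_{n+1})$ from the joint-harmonics model of the Weil representation — the relevant $O(2n)\times O(2)$-type is built from the harmonic polynomials of bidegree $(0;1)$ on $\mathbb{R}^{2n}\oplus\mathbb{R}^2$, twisted by the appropriate power of $\det$ on the $O(2)$-factor — and compare it with the Vogan–Zuckerman lowest $K$-type of $A_\mathfrak{q}$. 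Since for $n>1$ a nontrivial cohomological unitary $(\mathfrak{so}(2n,2),\cdot)$-module is determined by its infinitesimal character together with its lowest $K$-type, and there is exactly one such with the stated cohomology, this forces $\theta(\mathcal{D}_{n+1})|_{SO(2n,2)}\cong A_\mathfrak{q}$; the extension to the disconnected group $O(2n,2)$ is automatic because the Weil representation is a representation of all of $O(V)$.

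For $n=1$ the Levi $L\cong U(1)\times SO(0,2)$ is degenerate and the identification must be made by hand, so here I would pass through the accidental isomorphism $SO(2,2)^+\cong\{\pm1\}\backslash SL_2(\mathbb{R})\times SL_2(\mathbb{R})$ of \autoref{subsection:reps_and_cohomology}. Realizing $V$ of signature $(2,2)$ and trivial discriminant as $M_2(\mathbb{R})$ with a scaling of the determinant form, one computes $\theta(\mathcal{D}_2)$ either by a see-saw argument attached to this splitting or directly in the Fock model; this is the archimedean avatar of the Shimizu correspondence, and it sends the weight $2$ holomorphic discrete series to $\mathcal{D}_2\boxtimes\mathcal{D}_{-2}$ (and to $\mathcal{D}_{-2}\boxtimes\mathcal{D}_2$ on the other component of $O(2,2)$), so that on the full orthogonal group the two pieces recombine into $\pi\cong\pi(2,-2)$ with $\pi(2,-2)|_{SO(2,2)^+}=\mathcal{D}_2\boxtimes\mathcal{D}_{-2}\oplus\mathcal{D}_{-2}\boxtimes\mathcal{D}_2$, as claimed.

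The step I expect to be the main obstacle is not any single hard theorem but the bookkeeping around normalizations: one must track the additive character $\psi$, the eighth root of unity $\gamma_V$, and the $\chi_V$-twist appearing in the formulas of \autoref{subsection:Weil_representation} precisely enough to be sure that ``lowest weight $n+1$'' corresponds exactly to the $A_\mathfrak{q}$ with $(2n-1,2n-1)$-cohomology — and not to its contragredient or a neighbouring cohomological module — and, when $n=1$, to the correct one of $\mathcal{D}_2\boxtimes\mathcal{D}_{-2}$ versus $\mathcal{D}_{-2}\boxtimes\mathcal{D}_2$ on each connected component.
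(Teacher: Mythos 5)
Your proposal is correct and ultimately rests on the same two ingredients the paper invokes: Li's explicit computation of the theta lift of discrete series for $n>1$ (the paper cites Li, \S6, case $(I_4)$, with $\lambda=0$), and the archimedean Shimizu correspondence for $n=1$ (the paper cites Harris--Kudla, Prop.~4.4.2). Where you go further --- unpacking the infinitesimal-character transfer, the lowest-$K$-type match via joint harmonics, and the Vogan--Zuckerman/Salamanca-Riba uniqueness for unitary cohomological modules --- is a re-derivation of what the cited table entry already contains rather than a new route, and your (legitimate) worry about normalization bookkeeping is precisely what reading off Li's entry $(I_4)$ is designed to sidestep.
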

\begin{proof} The first claim follows from the computation of the pair $(\mathfrak{q},\lambda=0)$ in \cite[\textsection 6, ($I_4$)]{LiCohomology}. The second claim is \cite[Prop. 4.4.2]{HarrisKudlaGSp2}.
\end{proof}

Recall that underlying the archimedean theta correspondence for the dual pair $(Sp_{2r}(\mathbb{R}),O(2n,2))$ there is a bijective correspondence for $K$-types. This correspondence is explicitly described in \cite[Prop. 4.2.1]{HarrisKudlaGSp2} in terms of highest weights, and shows that the $O(2n)\times O(2)$-type of $\theta(\mathcal{D}_{n+1})$ corresponding to the lowest weight of $\mathcal{D}_{n+1}$ is given by
\begin{equation}
(2,0,\ldots,0)_+ \boxtimes (0)_+.
\end{equation}
(See \cite[I.2]{MoeglinArchimedeanTheta} for the definition of the irreducible representations $(\ldots)_+$.) Moreover, it appears with multiplicity one in $\theta(\mathcal{D}_{n+1})$. This is obvious when $n=1$ by the above proposition, and it holds for any $n>1$ since its restriction to $SO(2n) \times SO(2)$ agrees with the $K$-type $\mu(\mathfrak{q})$ of $A_\mathfrak{q}$ defined in \cite[(2.4)]{VoganZuckerman}, which is a Vogan lowest $K$-type for $A_\mathfrak{q}$ by \cite[Thm 5.3]{VoganZuckerman}.

The representations $A_\mathfrak{q}$ (resp. $A_{\mathfrak{q}^+}$ and $A_{\mathfrak{q}^-}$) also appear in the theta correspondence for the dual pair $(Sp_4(\mathbb{R}),O(2n,2))$ with $n>1$ (resp. with $n=1$); we denote the corresponding $(\mathfrak{sp}_4,U(2))$-module by $\pi_{\infty,n}$. Namely, let $P'=M'N' \subset Sp_4(\mathbb{R})$ be a parabolic subgroup with Levi subgroup $M' \cong SL_2(\mathbb{R}) \times GL_1(\mathbb{R})$ and consider the representation $\mathcal{D}_{n+1} \boxtimes |\det(\cdot)|^{n-1}$ of $M'$; we regard it as a representation of $P'$ by letting $N'$ act trivially. Then $\pi_{\infty,n}$ is an irreducible subquotient of the induced representation $Ind_{P'}^{Sp_4(\mathbb{R})}(\mathcal{D}_{n+1} \boxtimes |\det(\cdot)|^{n-1})$; see \cite[Cor. III.8.(iii)]{MoeglinArchimedeanTheta} for a proof.

The proof of \autoref{thm:Rallis_pairing} requires that a certain $U(2)$-type appears with multiplicity one in $\pi_{\infty,n}$ and a similar property for the theta lift $\theta(\mathbbm{1})$ of the trivial representation of $O(2n+2)$; this is the content of the following lemma.

\begin{lemma} \label{lemma:archimedean_K_type_one} For integers $l \leq l'$, let $\sigma_{l',l}$ be the irreducible representation of $U(2)$ with highest weight $(l',l)$ (see Section \ref{subsection:K_infty_type_varphi}).
\begin{enumerate}
\item The representation $\sigma_{n+1,n-1}$ appears with multiplicity one in $\pi_{\infty,n}$.
\item Consider now the reductive dual pair $(Sp_4(\mathbb{R}),O(2n+2))$. Denote by $\mathbbm{1}$ the trivial representation of $O(2n+2)$ and by $\theta(\mathbbm{1})$ its (small) theta lift. The representation $\sigma_{n+1,n+1}$ appears with multiplicity one in $\theta(\mathbbm{1})$.
\end{enumerate}
\end{lemma}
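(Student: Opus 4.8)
The plan is to handle the two assertions separately; both rest on the Fock model of the Weil representation, and the first also on the realization of $\pi_{\infty,n}$ as a subquotient of the Klingen-induced representation recalled above.

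For (1) I would bound the multiplicity from above and from below. Set $I=Ind_{P'}^{Sp_4(\mathbb{R})}(\mathcal{D}_{n+1}\boxtimes|\det(\cdot)|^{n-1})$. Since $M'$ is $\theta$-stable, $K\cap P'=K\cap M'=:K_{M'}$, and an explicit matrix computation identifies $K_{M'}$ with $SO(2)\times\{\pm1\}$ sitting inside $U(2)$ as $(\theta,\epsilon)\mapsto\mathrm{diag}(\epsilon,e^{i\theta})$, so that the weight $(p,q)$ of $U(2)$ restricts to the character $(\theta,\epsilon)\mapsto\epsilon^{p}e^{iq\theta}$ of $K_{M'}$. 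By Frobenius reciprocity $[\,I|_{U(2)}:\sigma_{l',l}\,]=[\,\sigma_{l',l}|_{K_{M'}}:\tau|_{K_{M'}}\,]$ with $\tau$ the inducing datum; as the $SO(2)$-weights occurring in $\mathcal{D}_{n+1}$ are $n+1,n+3,\dots$, and among the weights $(n+1,n-1),(n,n),(n-1,n+1)$ of $\sigma_{n+1,n-1}$ only the last has $SO(2)$-component $\geq n+1$, the type $\sigma_{n+1,n-1}$ occurs in $I$—and hence in its subquotient $\pi_{\infty,n}$—with multiplicity at most one. For the reverse inequality I would use the explicit $K$-type correspondence for the compact pair $(U(2),O(2n)\times O(2))$ underlying $(Sp_4(\mathbb{R}),O(2n,2))$, as in \cite[Prop. 4.2.1]{HarrisKudlaGSp2}: the $O(2n)\times O(2)$-type $(2,0,\dots,0)_+\boxtimes(0)_+$, which by the discussion preceding the lemma occurs with multiplicity one in $A_\mathfrak{q}$ and is a lowest $K$-type of it, corresponds to the $U(2)$-type of highest weight $(2,0)+(n-1,n-1)=(n+1,n-1)$; since correspondents of lowest $K$-types occur in theta lifts, $\sigma_{n+1,n-1}$ occurs in $\pi_{\infty,n}=\theta(A_\mathfrak{q})$. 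The two bounds give multiplicity exactly one.

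For (2) the argument is shorter. Since $O(2n+2)$ is compact, the classical (compact) case of Howe duality shows that $\theta(\mathbbm{1})$ is irreducible and equals the full theta lift $\Theta(\mathbbm{1})$, whose underlying $(\mathfrak{sp}_4,U(2))$-module, in the Fock model for $(Sp_4(\mathbb{R}),O(2n+2))$, is the space $\mathcal{P}(Mat_{(2n+2)\times2}(\mathbb{C}))^{O(2n+2)}$ of $O(2n+2)$-invariant polynomials, with $\mathfrak{u}(2)_\mathbb{C}=\mathfrak{gl}_2(\mathbb{C})$ acting through the right $GL_2$-action on the matrix $z$, twisted by the $U(2)$-character $\det^{n+1}$ of the vacuum line. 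By the first fundamental theorem of invariant theory for $O(m)$ (here $m=2n+2>2$), this ring is freely generated by the three Gram entries $\langle z_i,z_j\rangle$, $1\le i\le j\le2$, where $z_1,z_2$ are the columns of $z$; hence it is $GL_2(\mathbb{C})$-equivariantly isomorphic to $\mathcal{P}(Sym_2(\mathbb{C}))$, which is multiplicity-free as a $GL_2(\mathbb{C})$-module and whose space of $GL_2$-invariants is just the constants. Therefore the $\sigma_{n+1,n+1}$-isotypic subspace of $\theta(\mathbbm{1})$ is exactly the vacuum line, which is the assertion; more precisely one gets $\theta(\mathbbm{1})|_{U(2)}\cong\bigoplus_{a\ge b\ge0}\sigma_{2a+n+1,\,2b+n+1}$, each summand once.

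The genuinely routine part is (2). In (1) the delicate point is the bookkeeping of the characters of the component group $\{\pm1\}\subset GL_1(\mathbb{R})$ and of the $O(2)$-factor as one passes through both the $K$-type correspondence and the branching $U(2)\downarrow K_{M'}$; these signs must be compatible (and the two bounds above in fact force them to be), and one must ensure that the $U(2)$-type $\sigma_{n+1,n-1}$ isolated by the $K$-type correspondence genuinely lies in the specific subquotient $\pi_{\infty,n}$ rather than merely in the ambient $I$. I would settle this last point by combining Moeglin's identification of $\pi_{\infty,n}$ with the first-occurrence description of the archimedean theta lift.
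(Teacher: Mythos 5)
For part (1) your route is the same as the paper's: the lower bound comes from the explicit compact $K$-type correspondence (Harris--Kudla), and the upper bound comes from restricting the Klingen-induced representation to $U(2)$ and invoking Frobenius reciprocity/Peter--Weyl, passing to the subquotient $\pi_{\infty,n}$ at the end. You spell out the branching to $K_{M'}\cong\{\pm1\}\times SO(2)$ and flag the sign bookkeeping; the paper's proof makes the same Peter--Weyl argument without belaboring the component group, so you are at the same (slightly informal) level of rigour there. For part (2), however, you diverge: the paper simply cites \cite[Prop.\ 4.2.1]{HarrisKudlaGSp2} for the lower bound and \cite[Prop.\ 2.1]{KudlaRallisDegenerate} for the upper bound, whereas you give a self-contained computation in the Fock model using the first fundamental theorem of invariant theory for $O(2n+2)$ and the multiplicity-freeness of $\mathcal{P}(Sym_2(\mathbb{C}))$ as a $GL_2$-module. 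Your argument is a genuine alternative: it is more elementary, it does not depend on the degenerate-principal-series analysis of Kudla--Rallis, and it has the bonus of producing the full $U(2)$-type decomposition $\theta(\mathbbm{1})|_{U(2)}\cong\bigoplus_{a\ge b\ge0}\sigma_{2a+n+1,2b+n+1}$, each type with multiplicity one, of which the claimed multiplicity-one statement for $\sigma_{n+1,n+1}$ is the special case $a=b=0$. The only thing the paper's citation-based proof buys is brevity; your version is arguably preferable as it is verifiable on the spot.
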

\begin{proof}
$(1)$ It follows from the description of $K$-types of $A_\mathfrak{q}$ in \cite[Thm 5.3.(c)]{VoganZuckerman} that the $K$-type $(2,0,\ldots,0)_+ \boxtimes (0)_+$ considered above is of minimal Howe degree, and hence its corresponding $U(2)$-type is
\begin{equation*} 
\theta((2,0,\ldots,0)_+ \boxtimes (0)_+) = \sigma_{n+1,n-1},
\end{equation*}
with Howe degree $d=2$. Thus $\sigma_{n+1,n-1}$ appears with multiplicity at least one in $\pi_{\infty,n}$.
To prove the reverse inequality, consider the induced representation $Ind_{P'}^{Sp_4(\mathbb{R})}(\mathcal{D}_{n+1} \boxtimes |\det(\cdot)|^{n-1})$. By the Harish-Chandra decomposition $Sp_4(\mathbb{R})=P' \cdot U(2)$, restriction of functions from $Sp_4(\mathbb{R})$ to $U(2)$ induces an isomorphism
\begin{equation*}
Ind_{P'}^{Sp_4(\mathbb{R})}(\mathcal{D}_{n+1} \boxtimes |\det(\cdot)|^{n-1}) \overset{\cong}{\rightarrow} Ind_{P' \cap U(2)}^{U(2)}(\mathcal{D}_{n+1} \boxtimes |\det(\cdot)|^{n-1}|_{U(2)}) \subset L^2(U(2),\mathcal{D}_{n+1}),
\end{equation*}
where $L^2(U(2),\mathcal{D}_{n+1})$ is the space of square-integrable functions on $U(2)$ valued in $\mathcal{D}_{n+1}$.
Since $\mathcal{D}_{n+1}$ has lowest weight $n+1$, we conclude from the Peter-Weyl theorem that $\sigma_{n+1,n-1}$ appears with multiplicity one in $\pi_{\infty,n}$ in $Ind_{P'}^{Sp_4(\mathbb{R})}(\mathcal{D}_{n+1} \boxtimes |\det(\cdot)|^{n-1}))$. The assertion follows since $\pi_{\infty,n}$ is a subquotient of $Ind_{P'}^{Sp_4(\mathbb{R})}(\mathcal{D}_{n+1} \boxtimes |\det(\cdot)|^{n-1})$.

$(2)$ The multiplicity in the statement is at least $1$ by \cite[Prop. 4.2.1]{HarrisKudlaGSp2} and at most $1$ by \cite[Prop. 2.1]{KudlaRallisDegenerate}.
\end{proof}

\subsection{The $K_\infty$-type of $\tilde{\varphi}_\infty$} \label{subsection:K_infty_type_varphi} Let us briefly recall some of the results in \cite[\textsection 3.10]{GarciaRegularizedLiftsI}. There we constructed a $(1,1)$-form $\Phi(T,\varphi,s)_K$ depending on a totally positive matrix $T \in Sym_2(F)$, a Schwartz function $\varphi \in \mathcal{S}(V(\mathbb{A}_f)^2)$ fixed by $K$ and a complex parameter $s$; this $(1,1)$-form is defined on an open subset $U \subset X_K$ whose complement has measure zero and is integrable on $X_K$, hence defines a current $[\Phi(T,\varphi,s)_K] \in \mathcal{D}^{1,1}(X_K)$. We also defined a function
\begin{equation}
\widetilde{\mathcal{M}}_T(s):N(F) \backslash N(\mathbb{A}) \times A(\mathbb{R})^0 \rightarrow \mathbb{C},
\end{equation}
where $N$ is the unipotent radical of the Siegel parabolic of $Sp_4$ and $A$ denotes the subgroup of diagonal matrices in $Sp_{4,F}$. Given a measurable function $f:Sp_4(\mathbb{A}_F) \rightarrow \mathbb{C}$ that is left invariant under $N(F)$, we defined
\begin{equation} \label{eq:def_theta_lift}
(\widetilde{\mathcal{M}}_T(s),f)^{reg}=\int_{A(\mathbb{R})^0}\int_{N(F) \backslash N(\mathbb{A})} \widetilde{\mathcal{M}}_T(na,s) f(na)dn da
\end{equation}
for certain measures $dn$ and $da$, provided that the integral converges. We also introduced a Schwartz function
\begin{equation} \label{eq:def_tilde_varphi_infty}
\tilde{\varphi}_\infty \in [\mathcal{S}(V(\mathbb{R})^2) \otimes \mathcal{A}^{1,1}(\mathbb{D})]^{H(\mathbb{R})}.
\end{equation}
Given a Schwartz function $\varphi \in \mathcal{S}(V(\mathbb{A}_f)^2)$ fixed by an open compact subgroup $K \subset H(\mathbb{A}_f)$ and $g \in Sp_4(\mathbb{A}_F)$, one can define a theta function
\begin{equation}
\theta(g,\varphi \otimes \tilde{\varphi}_\infty)_K \in \mathcal{A}^{1,1}(X_K)
\end{equation}
that is left invariant under $Sp_4(F)$, and we proved that
\begin{equation}
\tilde{\Phi}(T,\varphi,s)_K=(\widetilde{\mathcal{M}}_T(s),\theta(\cdot,\varphi \otimes \tilde{\varphi}_\infty)_K)^{reg}
\end{equation}
when $Re(s) \gg 0$.

In this section we will describe the behaviour of $\tilde{\varphi}_\infty$ under the action of a certain maximal compact subgroup $K_\infty \subset G(\mathbb{R})$. Recall that this Schwartz function has the form
\begin{equation}
\tilde{\varphi}_\infty(v,w,z)=\tilde{\varphi}^{1,1}(v_1,w_1,z) \otimes \varphi_0^+(v_2,w_2) \otimes \cdots \otimes \varphi_0^+(v_d,w_d).
\end{equation}
Here $\varphi_0^+$ denotes the standard Gaussian on the quadratic vector space $V_i^2 =(V \otimes_{F,\sigma_i} \mathbb{R})^2$, $i=2,\ldots,d$, and
\begin{equation}
\tilde{\varphi}^{1,1} \in [\mathcal{S}(V_1^2) \otimes \mathcal{A}^{1,1}(\mathbb{D})]^{H(\mathbb{R})}
\end{equation}
is of the form $\tilde{\varphi}^{1,1}=\varphi^0 \otimes \varphi_{KM}$, where $\varphi^0$ denotes the Siegel gaussian function and $\varphi_{KM}$ is one of the Schwartz forms introduced by Kudla and Millson. 

 The group $K_\infty$ is defined as follows. The assignment
\begin{equation}
A+iB \in U(2) \mapsto \left( \begin{array}{cc}A & B \\ -B & A \end{array}  \right) \in Sp_4(\mathbb{R}) 
\end{equation}
defines an embedding $U(2) \hookrightarrow Sp_4(\mathbb{R})$ realizing $U(2)$ as a maximal compact subgroup of $Sp_4(\mathbb{R})$. We denote by $K_\infty \subset G(\mathbb{R}) \cong Sp_4(\mathbb{R})^d$ the maximal compact subgroup of $Sp_4(\mathbb{R})^d$ defined by $U(2)^d$. Let $T$ be the maximal torus of $U(2)$ obtained as the image of the embedding $\iota:U(1)^2 \rightarrow U(2)$ defined by
\begin{equation}
(z_1,z_2) \in U(1)^2 \mapsto \left( \begin{array}{cc} z_1 &  \\ & z_2 \end{array}  \right) \in U(2).
\end{equation}
Thus characters of $T$ are parametrized by pairs of integers $(n,m) \in \mathbb{Z}^2$. Given a representation $\sigma$ of $U(2)$, we say that $v \in \sigma$ has weight $(n,m)$ if
\begin{equation}
\iota(z_1,z_2) \cdot v=z_1^n z_2^m \cdot v,
\end{equation}
and if such a $v$ exists we say that $\sigma$ contains the weight $(n,m)$. The irreducible representations of $U(2)$ are parametrized by their highest weights: for a pair of integers $(l,l')$ with $l \leq l'$, there is a unique irreducible representation $\sigma_{(l,l')}$ of $U(2)$ whose set of weights equals $\{(l+k,l'-k)|0 \leq k \leq l'-l \}$; moreover, every irreducible representation of $U(2)$ is of this form. The weights in this set appear with multiplicity one in $\sigma_{(l,l')}$ and hence $dim_\mathbb{C} \sigma_{(l,l')}=l'-l+1$. In particular, $\sigma_{(l,l')}$ contains a vector of weight $(l,l')$ (resp. $(l',l)$); this vector is unique (up to multiplication by scalars) and is called a highest (resp. lowest) weight vector of $\sigma_{(l,l')}$.

\begin{lemma} \phantomsection \label{lemma:tilde_varphi_infty_K_type}
\begin{enumerate}
\item The form $\varphi^0_+$ generates the one-dimensional representation of $U(2)$ with weight $((n+2)/2,(n+2)/2)$.
\item The form $\tilde{\varphi}^{1,1}$ generates an irreducible representation of $U(2)$ of highest weight $((n-2)/2,(n+2)/2)$. Moreover $\tilde{\varphi}^{1,1}$ is a highest weight vector in this representation.
\end{enumerate}
\end{lemma}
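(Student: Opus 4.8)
The plan is to restrict the action of $K_\infty=U(2)$ to the torus $\iota(U(1)^2)=SO(2)\times SO(2)$, where the weights can be read off from standard facts about the Weil representation of $SL_2(\mathbb{R})$, and then to pin down the full $U(2)$-type by exhibiting an annihilating root vector in $\mathfrak{k}_\mathbb{C}=\mathfrak{gl}_2(\mathbb{C})$.

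First I would use the identification $Sp_4(\mathbb{R})=Sp(W_1\oplus W_2)$ with $W_1,W_2$ symplectic planes: this embeds $SL_2(\mathbb{R})\times SL_2(\mathbb{R})=Sp(W_1)\times Sp(W_2)$ in $Sp_4(\mathbb{R})$ in such a way that it acts on $\mathcal{S}(V_i^2)=\mathcal{S}(V_i)\otimes\mathcal{S}(V_i)$ one Schwartz variable at a time through the Weil representation of $(SL_2(\mathbb{R}),O(V_i))$, and its maximal compact subgroup $SO(2)\times SO(2)$ is precisely the torus $\iota(U(1)^2)$ of \autoref{subsection:K_infty_type_varphi}. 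Hence the $\iota(z_1,z_2)$-weight of a pure tensor $\varphi(v)\otimes\varphi'(w)$ is $(a,b)$, where $a$ and $b$ are the $SO(2)$-weights of $\varphi$ and $\varphi'$ in the $SL_2(\mathbb{R})$-Weil representation. I would then invoke the two classical weight computations: the Gaussian (with respect to a majorant) of a quadratic space of signature $(p,q)$ is an $SO(2)$-eigenvector of weight $(p-q)/2$, while the Kudla--Millson Schwartz form of the pair $(SL_2(\mathbb{R}),O(n,2))$ is an $SO(2)$-eigenvector of weight $1+n/2$ --- the latter being equivalent, via the $K$-type correspondence recalled just before \autoref{lemma:archimedean_K_type_one}, to the statement that $\varphi_{KM}$ is the Schwartz form attached to the lowest weight vector of $\mathcal{D}_{1+n/2}$. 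Feeding $\varphi^0_+$ on the positive definite space $V_i$ (of dimension $m=n+2$) into the first fact gives $\iota(z_1,z_2)$-weight $(m/2,m/2)=((n+2)/2,(n+2)/2)$; feeding $\tilde\varphi^{1,1}=\varphi^0\otimes\varphi_{KM}$ on $V_1$ of signature $(n,2)$ into both gives weight $((n-2)/2,(n+2)/2)$.

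To complete part (1) one still has to check that $\varphi^0_+$ spans a one-dimensional $U(2)$-module, i.e., that it is annihilated by both root vectors $E_{12},E_{21}$ of $\mathfrak{k}_\mathbb{C}=\mathfrak{gl}_2(\mathbb{C})$. These lie in the ``mixed'' summand $W_1\otimes W_2$ of the decomposition $\mathfrak{sp}(W_1\oplus W_2)=\mathfrak{sp}(W_1)\oplus\mathfrak{sp}(W_2)\oplus(W_1\otimes W_2)$, and through the formulas of \autoref{subsection:Weil_representation} they act as second order differential operators coupling the two Schwartz variables; since $V_i$ is positive definite and $\varphi^0_+$ is the bare Gaussian, a term-by-term inspection shows both operators kill it, and together with the weight already computed this identifies the character as $\det(\cdot)^{(n+2)/2}$. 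For part (2), write $(l,l'):=((n-2)/2,(n+2)/2)$, so $l<l'$. Since $\tilde\varphi^{1,1}$ is a polynomial times the Gaussian vacuum of the oscillator representation it is $K_\infty$-finite, so the $U(2)$-module it generates is finite dimensional, hence a sum of copies of the $\sigma_{(a,\,l+l'-a)}$ with $a\le l$. The key claim is that $\tilde\varphi^{1,1}$ is annihilated by the root vector $E_{21}\in\mathfrak{k}_\mathbb{C}$ of weight $(-1,1)$, which would shift its weight to $(l-1,l'+1)$. Granting the claim, a short argument with $\mathfrak{sl}_2$-strings concludes: in each summand $\sigma_{(a,\,l+l'-a)}$ with $a<l$ the weight-$(l,l')$ line sits $l-a\ge 1$ steps away from the line killed by $E_{21}$, so $E_{21}$ is injective there; since $E_{21}$ preserves isotypic components, $E_{21}\tilde\varphi^{1,1}=0$ forces $\tilde\varphi^{1,1}$ to have vanishing component in all those summands, hence to lie on the highest weight line of a single copy of $\sigma_{(l,l')}$, and, being nonzero, to generate it and be a highest weight vector.

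It remains to prove the claim $E_{21}\tilde\varphi^{1,1}=0$, which is the only genuine computation and which I expect to be the main obstacle. Writing a generator of the root space as an element of $W_1\otimes W_2$ in a symplectic basis $(\epsilon_j,\epsilon_j^*)$ of $W_j$ and translating through \autoref{subsection:Weil_representation}, one finds that $E_{21}$ acts as a linear combination of: multiplication by the $V_1$-bilinear form $\langle v,w\rangle$; the first order operators $\sum_k v^k\partial_{w^k}$ and $\sum_k w^k\partial_{v^k}$; and a constant-coefficient second order operator contracting $\partial_v$ against $\partial_w$ through the form on $V_1$. Applying this to $\tilde\varphi^{1,1}=\varphi^0(v)\,\varphi_{KM}(w)$ and using that $\varphi^0$ is the bare majorant Gaussian (so differentiating it only produces a linear factor) and that $\varphi_{KM}$ is a polynomial in the \emph{negative} coordinates of $w$ alone times the majorant Gaussian (so differentiating it in a \emph{positive} direction also only produces a linear factor), one checks that the four terms sum to zero: the negative-coordinate contributions cancel off first, and the surviving positive-coordinate contributions then cancel against each other. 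The difficulty here is purely one of bookkeeping --- fixing the normalization of the Weil representation (the $\gamma_V$-factors, the $\tfrac{m}{2}$-shift in the action of the Siegel Levi) so that these cancellations come out exact --- rather than anything structural. Finally, the edge case $n=2$ is covered by the same computation and yields $\sigma_{(0,2)}$.
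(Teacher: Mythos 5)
Your proposal is correct in structure and does prove the lemma, but it takes a genuinely different route from the paper's. The paper computes the weight of $\tilde\varphi^{1,1}$ under the diagonal torus exactly as you do (citing \cite[(7.7), Thm.\ 7.1(ii)]{KudlaOrthogonal}), but it then pins down the $U(2)$-type not by producing an annihilating root vector, but by passing to the Fock model: from \cite[(4.2)]{BruinierFunke}, the Fock image $\iota(\tilde\varphi^{1,1})$ is a degree-$2$ polynomial, and the degree-to-$K$-type correspondence of \cite[Prop.\ 4.2.1]{HarrisKudlaGSp2} gives $\deg(\sigma_{((n-2)/2-k,(n+2)/2+k)})=2+2k$, so degree $2$ forces $k=0$. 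The highest-weight statement is then automatic, since the weight $((n-2)/2,(n+2)/2)$ has multiplicity one in $\sigma_{((n-2)/2,(n+2)/2)}$. Your route via $E_{21}$-annihilation plus an $\mathfrak{sl}_2$-string argument is also valid, and avoids invoking the Harris--Kudla degree formula; but the computation you flag as the main obstacle is harder than it needs to be. If one works in the Fock model --- which the paper's proof sets up anyway --- then $\iota(\tilde\varphi^{1,1})$ is the vacuum in the $j=1$ block of Fock variables tensored with a degree-$2$ polynomial in the $j=2$ block, and the raising operator $E_{21}$ acts there as a sum of terms of the form $z_{2k}\partial_{z_{1k}}$; since there is no $j=1$-dependence at all, $E_{21}$ kills it on the nose, with no cancellation between terms. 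Carrying out the same verification in the Schr\"odinger model, as you propose, means conjugating through the Cayley transform and checking that a genuine second-order differential operator vanishes on the form, which is doable but unnecessarily delicate.

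One factual correction in your sketch that would matter if you did push the Schr\"odinger-model computation through: you assert that $\varphi_{KM}$ is ``a polynomial in the \emph{negative} coordinates of $w$ alone times the majorant Gaussian.'' It is in fact a polynomial in the \emph{positive} coordinates: see the explicit formula in \autoref{section:Example_Products_Shimura_curves}, $\varphi_{KM}(\cdot,z_0)=\tfrac14\prod_{\mu}\sum_{\alpha}\bigl(x_\alpha-\tfrac{1}{2\pi}\partial_{x_\alpha}\bigr)\varphi_0\otimes\omega_{\alpha\mu}$, where $\alpha$ runs over the \emph{positive} indices and $\mu$ over the negative ones; the $\wedge^{1,1}\mathfrak p^*$-indices are negative, but the Schwartz-function coefficient is a polynomial in the positive $x_\alpha$. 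This does not change the conclusion --- what ultimately makes the Fock-model argument work is that all nonconstant dependence sits in the $j=2$ (i.e.\ $w$) block --- but the cancellation pattern among the four Schr\"odinger-model terms you list would not be the one you anticipated, so the ``bookkeeping'' step would likely have stalled as written.
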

\begin{proof}
Part (1) is \cite[(7.19)]{KudlaOrthogonal}. To prove part (2), note first that $\tilde{\varphi}^{1,1}=\varphi^0 \otimes \varphi_{KM}$ has weight $((n-2)/2,(n+2)/2)$ by \cite[(7.7) and Thm. 7.1.ii)]{KudlaOrthogonal}. To study $U(2)$-types, it is more convenient to use a different realization of the Weil representation known as the Fock model. Consider the Harish-Chandra decomposition $\mathfrak{so}(n,2)_\mathbb{C}=Lie(K_{z_0})_\mathbb{C} \oplus \mathfrak{p}_+ \oplus \mathfrak{p}_-$ and denote by $\mathfrak{p}_{\pm}^*$ the dual of $\mathfrak{p}_{\pm}$. We regard $\tilde{\varphi}^{1,1}$ as an element of $\mathcal{S}(V_1^2) \otimes \mathfrak{p}_+^* \otimes \mathfrak{p}_-^*$ via the isomorphism
\begin{equation*}
[\mathcal{S}(V_1^2) \otimes \mathcal{A}^{1,1}(\mathbb{D})]^{H(\mathbb{R})} \cong [\mathcal{S}(V_1^2) \otimes  \mathfrak{p}_+^* \otimes \mathfrak{p}_-^*]^{K_{z_0}}.
\end{equation*}
The form $\tilde{\varphi}^{1,1}$ belongs to the smaller subspace 
\[
S(V_1^2) \otimes  \mathfrak{p}_+^* \otimes \mathfrak{p}_-^* \subset \mathcal{S}(V_1^2) \otimes  \mathfrak{p}_+^* \otimes \mathfrak{p}_-^*,
\]
where, for $r \geq 1$, the space $S(V_1^r)$ consists of all Schwartz forms $\varphi$ of the form $\varphi(v)=P(v) \cdot \varphi^0(v,z_0)$ with $P$ a polynomial on $V_1^r$. This space $S(V_1^r)$ is preserved by the action of the complexified Lie algebra $\mathfrak{sp}(2r)_\mathbb{C} \times \mathfrak{so}(n,2)_\mathbb{C}$ and admits an intertwining map
\[
\iota:S(V_1^{r}) \rightarrow \mathcal{P}(\mathbb{C}^{r(n+2)})
\]
such that $\iota(\varphi^0(\cdot,z_0))=1$, where $\mathcal{P}(\mathbb{C}^{r(n+2)})$ is the space of polynomials on $\mathbb{C}^{r(n+2)}$ endowed with the action $\mathfrak{sp}(2r)_\mathbb{C} \times \mathfrak{so}(n,2)_\mathbb{C}$ described in \cite[Thm. 7.1]{KudlaMillson3}. The image of the form $\varphi_{KM}(\cdot,z_0) \in S(V_1)$ under $\iota$ is computed explicitly in \cite[(4.2)]{BruinierFunke} and is shown to be a ($\mathfrak{p}_+ \otimes \mathfrak{p}_-$-valued) polynomial of degree $2$ on $\mathbb{C}^{n+2}$. It follows that the form $\tilde{\varphi}^{1,1}$ corresponds under $\iota$ to a $\mathfrak{p}_+ \otimes \mathfrak{p}_-$-valued polynomial of degree $2$ on $\mathbb{C}^{2(n+2)}$.

The action of $\mathfrak{u}(2)$ on $\mathcal{P}(\mathbb{C}^{2(n+2)})$ preserves degrees and integrates to an action of $U(2)$. Hence, for an irreducible representation $\sigma$ of $U(2)$, we can define $deg(\sigma)$ to be the minimal degree $d$ such that $\sigma$ appears in $\mathcal{P}_d(\mathbb{C}^{2(n+2)})$, the subspace of polynomials of degree $d$. Note that the irreducible representations $\sigma$ containing the weight $((n-2)/2,(n+2)/2)$ are of the form $\sigma_{l,l'}$, where $(l,l')=((n-2)/2-k,(n+2)/2+k)$ for some $k \geq 0$. By \cite[Prop. 4.2.1]{HarrisKudlaGSp2}, the degree of such $\sigma$ equals
\[
deg(\sigma_{(l,l')})=2+2k
\]
and hence the representation generated by $\iota(\tilde{\varphi}^{1,1})$ equals $\sigma_{((n-2)/2,(n+2)/2)}$ as was to be shown.
\end{proof}

The lemma implies that, under the action of $K_\infty \cong U(2)^d$, the form $\tilde{\varphi}_\infty$ generates an irreducible representation $\sigma_\infty$ given by
\begin{equation} \label{eq:def_sigma_infty}
\sigma_\infty = \sigma_{((n-2)/2,(n+2)/2)} \boxtimes \sigma_{((n+2)/2,(n+2)/2)} \boxtimes \cdots \boxtimes \sigma_{((n+2)/2,(n+2)/2)},
\end{equation}
and that $\tilde{\varphi}_\infty$ corresponds to a highest weight vector in $\sigma_\infty$.

\subsection{Theta lifts valued in differential forms} \label{subsection:theta_lifts_diff_forms} In this section we define some differential forms of degree $(n-1,n-1)$ on $X_K$ obtained as theta lifts of cusp forms on $Sp_4(\mathbb{A}_F)$. To do so, we first introduce a certain Schwartz form $\varphi_\infty$ in $\mathcal{S}(V(\mathbb{R})^2)$ valued in the space $\mathcal{A}^{n-1,n-1}(\mathbb{D})$ of differential forms of degree $(n-1,n-1)$ on $\mathbb{D}$. Then we define theta functions and theta lifts associated to this Schwartz form.

We start with the definition of $\varphi_\infty$. Let $k_D(z_1,z_2)$ be the Bergmann kernel of $\mathbb{D}$ and denote by $\Omega$ the K\"ahler form
\begin{equation}
\Omega(z)=\partial \overline{\partial}\log k_D(z,z) \in \mathcal{A}^{1,1}(\mathbb{D}).
\end{equation}
Then $(2\pi i)^{-1}\Omega$ is invariant under $H(\mathbb{R})$ and descends to a $(1,1)$-form on $X_K$ that represents the first Chern class of the canonical bundle of $X_K$. In particular, the cup product with $(2\pi i)^{-1}\Omega$ induces the Lefschetz operator 
\begin{equation}
H^*(X_K,\mathbb{C}) \rightarrow H^{*+2}(X_K,\mathbb{C})
\end{equation}
in cohomology. We now define a Schwartz form
\begin{equation} \label{eq:def_varphi_infty_n-1_1}
\varphi_\infty \in [\mathcal{S}(V(\mathbb{R})^2) \otimes \mathcal{A}^{n-1,n-1}(\mathbb{D})]^{H(\mathbb{R})}
\end{equation}
by
\begin{equation} \label{eq:def_varphi_infty_n-1_2}
\varphi_\infty=\tilde{\varphi}_{\infty} \wedge \Omega^{n-2},
\end{equation}
with $\tilde{\varphi}_\infty$ is defined in \eqref{eq:def_tilde_varphi_infty}.

Denote by $\omega$ the adelic Weil representation of $Sp_4(\mathbb{A}) \times O(V)(\mathbb{A})$ on $\mathcal{S}(V(\mathbb{A})^2)$. For $\varphi \in \mathcal{S}(V(\mathbb{A}_f)^2)$, $g \in Sp_4(\mathbb{A}_F)$ and $h_f \in O(V)(\mathbb{A}_f)$, define
\begin{equation}
\theta(g,h_f;\varphi \otimes \varphi_\infty)=\sum_{x \in V(F)^2} \omega(g_f,h_f)\varphi(x) \cdot \omega(g_\infty)\varphi_\infty(x,z).
\end{equation}
The sum converges absolutely and defines a differential form 
\begin{equation}
\theta(g;\varphi \otimes \varphi_\infty)_K \in \mathcal{A}^{n-1,n-1}(X_K),
\end{equation}
for any compact open $K \subset O(V)(\mathbb{A}_f)$ fixing $\varphi$. Moreover, as a function of $g \in Sp_4(\mathbb{A}_F)$ it is left invariant under $Sp_4(F)$ and of moderate growth on $Sp_4(F)\backslash Sp_4(\mathbb{A}_F)$. For a cusp form $f$ on $Sp_4(F) \backslash Sp_4(\mathbb{A}_F)$, define
\begin{equation} \label{eq:def_theta_lift_n-1_K}
\theta(f,\varphi \otimes \varphi_\infty)_K=\int_{Sp_4(F) \backslash Sp_4(\mathbb{A}_F)}\overline{f(g)} \theta(g;\varphi \otimes \varphi_\infty)_K dg. 
\end{equation}
Since $f$ is rapidly decreasing on $Sp_4(F) \backslash Sp_4(\mathbb{A}_F)$, the integral converges absolutely and defines a form in $\mathcal{A}^{n-1,n-1}(X_K)$. Note that if $K' \subset K$, then there is a natural map $\iota_{K',K}: X_{K'} \rightarrow X_K$ and we have
\begin{equation}
\theta(g;\varphi \otimes \varphi_\infty)_{K'} = \iota^*_{K',K}(\theta(g;\varphi \otimes \varphi_\infty)_K),
\end{equation}
so that we can define
\begin{equation} \label{eq:def_theta_lift_n-1}
\theta(f,\varphi \otimes \varphi_\infty)=(\theta(f,\varphi \otimes \varphi_\infty)_K)_K \in \mathcal{A}^{n-1,n-1}(X)=  \varinjlim_{K} \mathcal{A}^{n-1,n-1}(X_K).
\end{equation}

Suppose that $f$ generates an irreducible, cuspidal automorphic representation $\pi=\otimes'_v \pi_v$ of $Sp_4(\mathbb{A}_F)$. By choosing the archimedean components $\pi_v$ appropriately, we can ensure that $\theta(f,\varphi \otimes \varphi_\infty)$, if non-vanishing, defines a non-zero cohomology class in $H^{n-1,n-1}(X)$. Namely, assume that for $v$ archimedean, we have
\begin{equation} \label{eq:pi_arch_choice}
\pi_v= \left\{ \begin{array}{ccc} \pi_{\infty,n}, & \quad \text{for} \quad v=\sigma_1, & \\
\theta(\mathbbm{1}), & \quad \text{for} \quad v=\sigma_i, & \quad i=2,\ldots,d.
\end{array}  \right.
\end{equation}
Here $\pi_{\infty,n}$ is defined in \autoref{subsection:reps_and_cohomology} and $\theta(\mathbbm{1})$ is the theta lift to $Sp_4(\mathbb{R})$ of the trivial representation $\mathbbm{1}$ of $O(n+2)$. The results of Section \ref*{subsection:reps_and_cohomology} show that, for $n>1$ (resp. $n=1$), the restriction of the theta lift $\theta(\pi_{\sigma_1})$ to $SO(n,2)$ equals $A_\mathfrak{q}$ (resp. $A_{\mathfrak{q}^+} \oplus A_{\mathfrak{q}^-}$). It follows that the eigenvalue of the Casimir element on the global theta lift $\theta(\pi)$ equals $0$; in particular, the form $\theta(f,\varphi \otimes \varphi_\infty)$ is harmonic. Since the Shimura variety $X_K$ is compact by assumption, an application of Hodge theory shows that the form $\theta(f,\varphi \otimes \varphi_\infty)$ is closed and that, if non-vanishing, it defines a non-trivial cohomology class in $H^{n-1,n-1}(X):=\varinjlim_{K} H^{n-1,n-1}(X_K)$.

We recall some results of \cite[\textsection 10]{KudlaOrthogonal} on the integration of differential forms on $X_K$. Let $\mathfrak{so}(n,2)$ the Lie algebra of $SO(n,2)^+$ and $\mathfrak{k}$ be the complexification of the Lie algebra of its maximal compact subgroup $K_{n,2} = SO(n) \times SO(2)$. Let $\mathfrak{p}_+$, $\mathfrak{p}_-$ be the summands in the Harish-Chandra decomposition $\mathfrak{so}(n,2)_\mathbb{C}=\mathfrak{k} \oplus \mathfrak{p}_+ \oplus \mathfrak{p}_-$ and write $\bigwedge^{a,b}\mathfrak{p}^*=\bigwedge^a \mathfrak{p}_+^* \otimes \bigwedge^b \mathfrak{p}_-^*$. Note that
\begin{equation}
\mathcal{A}^{a,b}(\mathbb{D}) \cong [\bigwedge^{a,b}\mathfrak{p}^* \otimes \mathcal{C}^\infty(SO(n,2))]^{K_{n,2}}.
\end{equation} 
Denote by $Z \cong Res_{F/\mathbb{Q}}\mathbb{G}_m$ the kernel of the natural map $H \rightarrow Res_{F/\mathbb{Q}}SO(V)$. A choice of a non-zero element $\mathbbm{1} \in \bigwedge^{n,n}\mathfrak{p}$ yields an isomorphism 
\begin{equation} \label{eq:isom_volume_forms_aut_forms}
\begin{split}
\mathcal{A}^{n,n}(X_K) & \cong [\mathcal{A}^{n,n}(\mathbb{D}) \otimes \mathcal{C}^\infty(H(\mathbb{A}_f))]^{H(\mathbb{Q}) \times K} \\
& \cong [\bigwedge^{n,n} \mathfrak{p}^* \otimes \mathcal{C}^\infty(SO(n,2)) \otimes \mathcal{C}^\infty(H(\mathbb{A}))]^{H(\mathbb{Q}) \times K \times K_{n,2}} \\
& \cong [\bigwedge^{n,n} \mathfrak{p}^* \otimes \mathcal{C}^\infty(H(\mathbb{Q})Z(\mathbb{R}) \backslash H(\mathbb{A}))]^{K \times K_{n,2} \times SO(n+2)^{d-1}} \\
& \cong [\mathcal{C}^\infty(H(\mathbb{Q})Z(\mathbb{R}) \backslash H(\mathbb{A}))]^{K \times K_{n,2} \times SO(n+2)^{d-1}}.
\end{split}
\end{equation}
This isomorphism allows to identify the forms $\eta \in \mathcal{A}^{n,n}(X_K)$ of top degree on $X_K$ with automorphic forms $\tilde{\eta}$ on $H$. Under this identification, we have
\begin{equation} \label{eq:def_c}
\int_{X_K} \eta = c' \cdot Vol(K/K \cap Z(\mathbb{Q}),dh)^{-1} \cdot \int_{H(\mathbb{Q})Z(\mathbb{R}) \backslash H(\mathbb{A})} \tilde{\eta}(h) dh,
\end{equation}
where $dh$ denotes the Tamagawa measure on $H$ and $c'>0$ is a constant independent of $K$; this is \cite[Lemma 10.2]{KudlaOrthogonal}.

\subsection{Pairing currents and forms} \label{subsection:pairing_currents_forms} Let $\pi=\otimes'_v\pi_v \subset \mathcal{A}_0(Sp_{4,F})$ be an irreducible cuspidal automorphic representation of $Sp_4(\mathbb{A}_F)$ and $f \in \pi$ be a cusp form. Choose Schwartz forms $\varphi, \tilde{\varphi} \in \mathcal{S}(V(\mathbb{A}^\infty)^2)$ and let $T \in Sym_2(F)$ be totally positive definite. We have now introduced currents
\begin{equation*}
[\tilde{\Phi}(T,\tilde{\varphi},s')] \in \mathcal{D}^{1,1}(X)
\end{equation*}
and differential forms
\begin{equation*}
\theta(f,\varphi \otimes \varphi_\infty) \in \mathcal{A}^{n-1,n-1}(X).
\end{equation*}
Define
\begin{equation} \label{eq:def_pairing_limit}
([\tilde{\Phi}(T,\tilde{\varphi},s')],\overline{\theta(f,\varphi \otimes \varphi_\infty)})=Vol(K/K\cap Z(\mathbb{Q}))\cdot \int_{X_K} \tilde{\Phi}(T,\tilde{\varphi},s')_K \wedge \overline{\theta(f,\varphi \otimes \varphi_\infty)_K},
\end{equation}
where $K$ is any compact open subgroup of $H(\mathbb{A}_f)$ such that $\tilde{\varphi}$ and $\varphi$ are fixed by $K$. It is easy to see that this definition is independent of the choice of $K$. 

To relate this pairing to a special value of an $L$-function, we will need to restrict our attention to Schwartz forms $\tilde{\varphi}$ belonging to a certain $K^\infty$-type of the Weil representation. More precisely, fix an open compact subgroup $K^\infty=\Pi_{v \nmid \infty} K_v$ of $Sp_4(\mathbb{A}_F^\infty)$ and define $K=K^\infty \times U(2)^d$. Let $dk=\Pi_v dk_v$ be a Haar measure on $K^\infty$ defined by local measures such that $Vol(K_v,dk_v)=1$ for every place $v$. Fix an irreducible representation $\sigma^\infty=\otimes'_{v \nmid \infty} \sigma_v$ of $K^\infty$. Thus $\sigma^\infty$ is finite dimensional and for $v$ outside some finite set of places, the local component $\sigma_v$ is the trivial representation. We write 
\begin{equation}\label{eq:def_sigma}
\sigma=\sigma^\infty \otimes \sigma_\infty,
\end{equation}
where $\sigma_\infty$ is the representation of $U(2)^d$ given by \eqref{eq:def_sigma_infty}, and we fix a highest weight vector $w_\infty=\otimes_{v | \infty}w_v \in \sigma_\infty$ and a lowest weight vector $w_\infty^\vee=\otimes_{v | \infty}w_v^\vee \in \sigma_\infty^\vee$ with $(w_\infty,w^\vee_\infty)=1$.

\begin{definition} \label{def:varphi_wveew}
For vectors $w \in \sigma^\infty$ and $w^\vee \in (\sigma^\infty)^\vee$ and a Schwartz form $\tilde{\varphi} \in \mathcal{S}(V(\mathbb{A}^\infty)^2)$, define a Schwartz form $\tilde{\varphi}_{w^\vee,w}$ by
\begin{equation} \label{eq:def_varphi_wveew}
\tilde{\varphi}_{w^\vee,w}(x,y)=\int_{K^\infty} (\sigma^\vee(k)w^\vee,w) \cdot \omega(k)\tilde{\varphi}(x,y)dk \in \mathcal{S}(V(\mathbb{A}^\infty)^2).
\end{equation}
\end{definition}
Thus, for any $w^\vee$ and $w$, the form $\tilde{\varphi}_{w^\vee,w}$ belongs to the $\sigma$-isotypic part of the Schwartz space $\mathcal{S}(V(\mathbb{A}^\infty)^2)$. If $w=\otimes'_v w_v$, $w^\vee=\otimes'_v w_v^\vee$ and $\tilde{\varphi}=\otimes_v' \tilde{\varphi}_v$ are pure tensors, then $\tilde{\varphi}_{w^\vee,w}=\otimes'_v (\tilde{\varphi}_v)_{w^\vee_v,w_v}$ where we define
\begin{equation} \label{eq:def_varphi_wveew_local}
(\tilde{\varphi}_v)_{w_v^\vee,w_v}=\int_{K_v} (\sigma_v^\vee(k)w_v^\vee,w_v) \cdot \omega(k)\tilde{\varphi}_vdk_v \in \mathcal{S}(V(F_v)^2).
\end{equation}

To state our main theorem, we need to introduce some additional notation. Namely, let $H'=Res_{F / \mathbb{Q}}O(V)$ and $H^0= Res_{F / \mathbb{Q}}SO(V)$. Then $H^0$ is the connected component of the identity of $H'$ and we have
\begin{equation}
H'=H^0 \rtimes \langle \tau \rangle,
\end{equation}
where $\langle \tau \rangle \cong \mathbb{Z}/2\mathbb{Z}$ and one can take $\tau$ to be the reflection along any hyperplane $W$ of $V$. We fix such a $W$ and assume $W$ to be defined over $F$ and $z_0 \perp W$; $\tau$ then denotes the reflection along $W$ and for any set of places $S$ we write
\begin{equation}
C(\mathbb{A}_S)=\prod_{v \in S} \langle \tau \rangle \cong H'(\mathbb{A}_S)/H^0(\mathbb{A}_S).
\end{equation}
Denote by 
\begin{equation} \label{eq:def_alpha_v_arch}
\alpha_\infty \in \mathcal{S}(V(\mathbb{R})^4)
\end{equation}
the Schwartz form obtained by evaluating the form
\begin{equation}
\tilde{\varphi}_\infty \wedge \overline{\varphi_\infty} \in [\mathcal{S}(V(\mathbb{R})^4) \otimes \mathcal{A}^{n,n}(\mathbb{D})]^{H(\mathbb{R})} \cong  [\mathcal{S}(V(\mathbb{R})^4) \otimes \bigwedge^{2n}\mathfrak{p}^*]^{K_{z_0}}
\end{equation}
on a fixed element $\mathbbm{1} \in \bigwedge^{2n}\mathfrak{p}$; then $\alpha_\infty=\otimes_{v|\infty}\alpha_v$. The following property of $\alpha_\infty$ will be used in the proof of \autoref{thm:Rallis_pairing}.

\begin{lemma} \label{lemma:C_R_invariance}
The form $\alpha_\infty \in \mathcal{S}(V(\mathbb{R})^4)$ is invariant under $C(\mathbb{R})=\Pi_{v | \infty} \langle \tau \rangle$.
\end{lemma}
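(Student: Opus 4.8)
The plan is to reduce the assertion to the individual archimedean places and then to exploit that the line $\bigwedge^{2n}\mathfrak{p}^{*}$ carries a trivial action of the stabilizer of $z_{0}$ in the \emph{full} orthogonal group.

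Since $\tilde{\varphi}_\infty$, $\varphi_\infty$ and $\tau$ all factor as pure tensors over the archimedean places, it suffices to prove that each local factor $\alpha_{v}\in\mathcal{S}(V(F_{v})^{4})$ is $\tau_{v}$-invariant. When $v=\sigma_{i}$ with $i\geq 2$, the space $V_{i}$ is positive definite, the attached domain is a point, and by construction $\tilde{\varphi}_{v}=\varphi_{v}=\varphi_{0}^{+}$; hence $\alpha_{v}=\varphi_{0}^{+}\cdot\overline{\varphi_{0}^{+}}$ is invariant under all of $O(V_{i})(\mathbb{R})$, in particular under $\tau_{v}$. The content is therefore concentrated at $v=\sigma_{1}$, where $V_{1}$ has signature $(n,2)$.

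At $v=\sigma_{1}$, recall from \autoref{subsection:K_infty_type_varphi} that $\tilde{\varphi}_{\sigma_{1}}=\varphi^{0}\otimes\varphi_{KM}$ and $\varphi_{\sigma_{1}}=\tilde{\varphi}_{\sigma_{1}}\wedge\Omega^{n-2}$, so that $\tilde{\varphi}_{\sigma_{1}}\wedge\overline{\varphi_{\sigma_{1}}}$, built by wedging from $\varphi^{0}$, $\varphi_{KM}$, $\overline{\varphi^{0}}$, $\overline{\varphi_{KM}}$ and $\overline{\Omega}$, lies in $[\mathcal{S}(V_{1}^{4})\otimes\mathcal{A}^{n,n}(\mathbb{D})]^{H(\mathbb{R})}\cong[\mathcal{S}(V_{1}^{4})\otimes\bigwedge^{2n}\mathfrak{p}^{*}]^{K_{z_{0}}}$, and $\alpha_{\sigma_{1}}$ is its image under contraction with $\mathbbm{1}\in\bigwedge^{2n}\mathfrak{p}$. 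The hypothesis $z_{0}\perp W$ says that $\tau$ fixes the point $z_{0}\in\mathbb{D}$ and acts biholomorphically there, so $\tau\in K_{z_{0}}^{O}:=\mathrm{Stab}_{O(n,2)(\mathbb{R})}(z_{0})\cong O(n)\times SO(2)$, a subgroup containing $K_{z_{0}}=SO(n)\times SO(2)$ with index two. Each of $\varphi^{0}$, $\varphi_{KM}$ and $\Omega$ is in fact equivariant for this larger group $K_{z_{0}}^{O}$ — the $O(V)(\mathbb{R})$-equivariance of the Siegel Gaussian and of the Kudla--Millson form being part of their construction (cf. \cite{KudlaMillson3,KudlaOrthogonal}), and $\overline{\Omega}$ being invariant under the isometry $\tau$ — so $\tilde{\varphi}_{\sigma_{1}}\wedge\overline{\varphi_{\sigma_{1}}}$ lies in the smaller space $[\mathcal{S}(V_{1}^{4})\otimes\bigwedge^{2n}\mathfrak{p}^{*}]^{K_{z_{0}}^{O}}$. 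Writing this element as $\sum_{j}\phi_{j}\otimes\omega_{j}$ with $\phi_{j}\in\mathcal{S}(V_{1}^{4})$ and $\omega_{j}\in\bigwedge^{2n}\mathfrak{p}^{*}$, pairing the second factor against $\mathbbm{1}$ and using $\langle k\cdot\omega_{j},\mathbbm{1}\rangle=\langle\omega_{j},k^{-1}\cdot\mathbbm{1}\rangle$ reduces the desired identity $\omega(\tau)\alpha_{\sigma_{1}}=\alpha_{\sigma_{1}}$ to showing that $\mathbbm{1}$ is fixed by $K_{z_{0}}^{O}$, i.e. that $K_{z_{0}}^{O}$ acts trivially on $\bigwedge^{2n}\mathfrak{p}^{*}=\bigwedge^{n}\mathfrak{p}_{+}^{*}\otimes\bigwedge^{n}\mathfrak{p}_{-}^{*}$.

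The latter is a short computation: viewing $\mathfrak{p}\cong z_{0}^{\perp}\otimes z_{0}$ as a $K_{z_{0}}^{O}$-module, the factor $O(n)=O(z_{0}^{\perp})$ acts on $\mathfrak{p}_{+}$ and on $\mathfrak{p}_{-}$ through the complexified standard representation, hence on $\bigwedge^{n}\mathfrak{p}_{+}^{*}\otimes\bigwedge^{n}\mathfrak{p}_{-}^{*}$ by $\det^{2}\equiv 1$; and $SO(2)=SO(z_{0})$ acts on $\bigwedge^{n}\mathfrak{p}_{+}^{*}$ by a character $\chi$ and on $\bigwedge^{n}\mathfrak{p}_{-}^{*}$ by $\chi^{-1}$, hence trivially on the tensor product (the hypothesis that $n$ is even plays no role here). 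I expect the only real obstacle to be the justification that $\varphi^{0}$ and $\varphi_{KM}$ are equivariant for the orientation-reversing reflection $\tau$, and not merely for $SO(n,2)^{+}$ (equivalently $GSpin(V)(\mathbb{R})$) — that is, that one may legitimately enlarge $K_{z_{0}}$ to $K_{z_{0}}^{O}$; granting this together with $\tau\cdot z_{0}=z_{0}$, the triviality of the top exterior power closes the argument.
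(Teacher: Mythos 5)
Your argument takes a genuinely different route from the paper's. The paper first uses $\overline{\varphi_{KM}}=-\varphi_{KM}$ to reduce the claim at $\sigma_1$ to the $\tau$-invariance of the scalar-valued function $\omega(v,w)=(\varphi_{KM}(v,z_0)\wedge\varphi_{KM}(w,z_0)\wedge\Omega(z_0))(\mathbbm{1})$, identifies $\omega$ with a multiple of the restriction to $V_1^2$ of Kudla's form $\varphi^{(n)}(\mathbbm{1})\in\mathcal{S}(V_1^n)$, and cites the proof of Lemma 10.4 of \cite{KudlaOrthogonal} for the $\tau$-invariance of the latter. You instead argue directly by enlarging the equivariance group: you assert each constituent ($\varphi^0$, $\varphi_{KM}$, $\Omega$) is equivariant under $K_{z_0}^O\cong O(n)\times SO(2)$ rather than only $K_{z_0}=SO(n)\times SO(2)$, and then compute that $K_{z_0}^O$ acts trivially on $\bigwedge^{2n}\mathfrak{p}^*$, so that contraction with $\mathbbm{1}$ preserves $K_{z_0}^O$-invariance and hence gives $\tau$-invariance of $\alpha_{\sigma_1}$. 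Your character computation on $\bigwedge^n\mathfrak{p}_+^*\otimes\bigwedge^n\mathfrak{p}_-^*$ is correct (each factor contributes $\det_{O(n)}^{\pm 1}$ whose square is trivial, and the $SO(2)$-characters cancel), the observation that no parity assumption on $n$ is needed at this step is also correct, and the reduction via contraction against $\mathbbm{1}$ is sound.

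However, you flag but do not close the crux: the equivariance of $\varphi_{KM}$ under $O(n)\times SO(2)$. By definition $\varphi_{KM}$ is an element of $[\mathcal{S}(V_1)\otimes\bigwedge^{1,1}\mathfrak{p}^*]^{K_{z_0}}$ with $K_{z_0}=SO(n)\times SO(2)$; the invariance under the index-two overgroup is an additional fact that requires an argument. It is true and closable, either from the explicit formula — for instance the $n=2$ expression displayed in \autoref{section:Example_Products_Shimura_curves} is manifestly invariant under $x_2\mapsto -x_2$, $\omega_{2j}\mapsto -\omega_{2j}$ — or conceptually, because the operators $\sum_i\bigl(x_i-\tfrac{1}{2\pi}\partial_{x_i}\bigr)\otimes A_{ij}$ that produce $\varphi_{KM}$ from the Gaussian are $O(n)$-invariant contractions and the Gaussian itself is $O(n)\times O(2)$-invariant. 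As written, though, your argument is incomplete at exactly the step you identify, whereas the paper's appeal to \cite{KudlaOrthogonal} sidesteps the issue entirely.
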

\begin{proof}
The forms $\varphi^0(\cdot,z_0) \in \mathcal{S}(V_1)$ and $\varphi^0_+ \in \mathcal{S}(V_{\sigma_i}^2)$ for $i=2,\ldots,d$ defined in 
\cite[\textsection 3.8.1]{GarciaRegularizedLiftsI} are clearly invariant under $\tau$. Since $\overline{\varphi_{KM}}=-\varphi_{KM}$, it suffices to show that
\begin{equation*}
\omega(v,w)=(\varphi_{KM}(v,z_0) \wedge \varphi_{KM}(w,z_0) \wedge \Omega(z_0))(\mathbbm{1}) \in \mathcal{S}(V_1^2)
\end{equation*}
is invariant under $\tau$. Consider $V_1^2$ as embedded in $V_1^n$ by the map sending $(v,w)$ to $(v,w,0,\ldots,0)$. By \cite[Thm 7.1.(iv) and (v)]{KudlaOrthogonal}, the Schwartz function $\omega$ is a multiple of the restriction to $V_1^2$ of $\varphi^{(n)}(\mathbbm{1}) \in \mathcal{S}(V_1^n)$. The proof of \cite[Lemma 10.4] {KudlaOrthogonal} shows that $\varphi^{(n)}(\mathbbm{1})$, and hence $\omega$, is invariant under $\tau$.
\end{proof}

From now on we assume that the following hypotheses hold; note that, by the results of \autoref{subsection:reps_and_cohomology}, hypotheses $(1)$ and $(2)$ imply that the theta lift $\theta(\pi)$ contributes non-trivial cohomology classes in $H^{n-1,n-1}(X)$. Note also that hypotheses $(2)-(5)$ are local.

\begin{hypotheses} \phantomsection \label{hyp:main_thm_pairing}
\begin{enumerate}
\item The global theta lift $\theta(\pi) \subset \mathcal{A}(O(V))$ is non-vanishing;
\item At archimedean places $v$, the representation $\pi_v$ is given by \eqref{eq:pi_arch_choice};
\item $\dim_\mathbb{C} Hom_{K^\infty}(\sigma^\infty,\pi^\infty)=1$;
\item The forms $f=\otimes_v f_v$, $\varphi=\otimes'_{v \nmid \infty} \varphi_v$ and $\tilde{\varphi}=\otimes'_{v \nmid \infty} \tilde{\varphi_v}$ are pure tensors;
\item The Schwartz form $\otimes_{v \nmid \infty} (\tilde{\varphi}_v \otimes \varphi_v)$ is $C(\mathbb{A}^\infty)$-invariant.
\end{enumerate} 
\end{hypotheses}

For a non-archimedean place $v$ and fixed $w_v \in \sigma_v$, $w_v^\vee \in \sigma_v^\vee$, write
\begin{equation} \label{eq:def_alpha_v_nonarch}
\alpha_v=\tilde{\varphi}_v \otimes \overline{\varphi_v} \in \mathcal{S}(V(F_v)^4).
\end{equation}
We denote by $\Phi_v \in Ind_{P_8(F_v)}^{Sp_8(F_v)}(\chi_V,s)$ the section determined by $\alpha_v$, defined for $g \in Sp_8(F_v)$ by
\begin{equation} \label{eq:def_Phi_v_thm_Rallis}
\Phi_v(g,s)=\omega(g)(\alpha_v)(0) \cdot |a(g)|^{s-\frac{1-m}{2}}.
\end{equation}
and by $\Phi_{w^\vee_v,w_v}$ the section of $Ind_{P_8(F_v)}^{Sp_8(F_v)}(\chi_V,s)$ defined by \eqref{eq:def_Phi_v_wveew}.
For archimedean places $v$, we define $\Phi_v$ by equation \eqref{eq:def_Phi_v_thm_Rallis}, with $\alpha_v \in \mathcal{S}(V(F_v)^4)$ the Schwartz functions determined by the condition $\alpha_\infty= \otimes_{v | \infty} \alpha_v$, with $\alpha_\infty$ as in \eqref{eq:def_alpha_v_arch}.


The results of \autoref{subsection:K_infty_type_varphi} show that the Schwartz form $\tilde{\varphi}_\infty$ gives a highest weight vector in $\sigma_\infty$, and hence we simply define $\Phi_{w_v^\vee,w_v}=\Phi_v$ for $v$ archimedean. (Since integrating $\Phi_\infty$ against the matrix coefficient $(\sigma_\infty^\vee w_\infty^\vee,w_\infty)$ as in \eqref{eq:def_Phi_v_wveew} gives  $1/3 \cdot \Phi_\infty$ by Schur orthogonality.)

We are now ready to state the main result of this section. Its proof will be given in \autoref{subsection:proof_thm_Rallis_pairing}.

\begin{theorem} \label{thm:Rallis_pairing} Assume that the hypotheses in \ref{hyp:main_thm_pairing} hold and choose a $K$-invariant embedding $\iota_\pi=\otimes_v \iota_{\pi,v}: \sigma \hookrightarrow \pi$. Let $w=\otimes_{v \nmid \infty} w_v \in \sigma^\infty$, $w^\vee=\otimes_{v \nmid \infty} w_v^\vee \in (\sigma^\infty)^\vee$ and $f^\vee=\otimes_v f_v^\vee \in \pi^\vee$ such that $(\iota_{\pi,v}(w_v),f_v^\vee)=1$ for all $v$. Then there is a positive constant $C$, depending only on the quadratic space $V$, such that
\begin{equation*}
\begin{split}
([\tilde{\Phi}(T,\tilde{\varphi}_{w^\vee,w},s')],\overline{\theta(f,\varphi \otimes \varphi_\infty)}) & =C \cdot (\widetilde{\mathcal{M}}_T(s'),f_{\iota_\pi(w \otimes w_\infty)})^{reg} \\
& \quad \cdot \left. \frac{\Lambda(\pi^\vee,\chi_V,s+1/2)}{d(\chi_V,s)} \right|_{s=s_0} \cdot \prod_v Z^0_v(f^\vee_v,f_v,\delta_*\Phi_{w_v^\vee,w_v},\chi_{V,v},s_0),
\end{split}
\end{equation*}
where $s_0=(n-3)/2$ and for all but finitely many $v$ we have $Z^0_v(f^\vee_v,f_v,\delta_*\Phi_{w_v^\vee,w_v},\chi_{V,v},s_0)=1$.
\end{theorem}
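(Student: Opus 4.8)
The plan is to evaluate the left-hand side for $Re(s')\gg 0$, reduce it by a Siegel--Weil argument to the global doubling integral of \autoref{subsection:Euler_factorization}, apply \autoref{proposition:Euler_product}, and then extend the resulting identity to all $s'$ by meromorphic continuation. For $Re(s')\gg 0$ one first replaces $\tilde\Phi(T,\tilde\varphi_{w^\vee,w},s')_K$ by $(\widetilde{\mathcal M}_T(s'),\theta(\,\cdot\,;\tilde\varphi_{w^\vee,w}\otimes\tilde\varphi_\infty)_K)^{reg}$ and unfolds the theta lift, writing $\overline{\theta(f,\varphi\otimes\varphi_\infty)_K}=\int_{Sp_4(F)\backslash Sp_4(\mathbb A_F)}f(g_2)\,\overline{\theta(g_2;\varphi\otimes\varphi_\infty)_K}\,dg_2$. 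Substituting into \eqref{eq:def_pairing_limit} and interchanging all integrations --- legitimate in this range by the rapid decrease of $f$ and the growth estimates on $\widetilde{\mathcal M}_T$ from Part~I --- the pairing becomes $Vol(K/K\cap Z(\mathbb Q))$ times the integral over $g_1\in N(F)\backslash N(\mathbb A)\times A(\mathbb R)^0$ of $\widetilde{\mathcal M}_T(g_1,s')$ against $\int_{Sp_4(F)\backslash Sp_4(\mathbb A_F)}f(g_2)\left(\int_{X_K}\theta(g_1;\tilde\varphi_{w^\vee,w}\otimes\tilde\varphi_\infty)_K\wedge\overline{\theta(g_2;\varphi\otimes\varphi_\infty)_K}\right)dg_2$.

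Next I would treat the inner integral. Since $\varphi_\infty=\tilde\varphi_\infty\wedge\Omega^{n-2}$, the wedge of the two theta forms is the top-degree $(n,n)$-form attached to the theta kernel of the dual pair $(Sp_8,O(V))$ for the doubling embedding $\iota:Sp_4\times Sp_4\hookrightarrow Sp_8$ and the Schwartz form $\alpha=\left(\otimes_{v\nmid\infty}(\tilde\varphi_v\otimes\overline{\varphi_v})\right)\otimes\alpha_\infty$, with $\alpha_\infty$ as in \eqref{eq:def_alpha_v_arch}; in particular its component at a finite place $v$ is $\tilde\varphi_{w_v^\vee,w_v}\otimes\overline{\varphi_v}$. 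Using the integration formula \eqref{eq:def_c}, then pushing the $O(V)$-theta kernel down along $H=Res_{F/\mathbb Q}GSpin(V)\to Res_{F/\mathbb Q}SO(V)$ and extending the integration from $SO(V)$ to $O(V)$ --- possible because $\alpha$ is $C(\mathbb A)$-invariant, by hypothesis \ref{hyp:main_thm_pairing}(5) together with \autoref{lemma:C_R_invariance} --- one rewrites the inner $X_K$-integral, up to a positive constant depending only on $V$ (which absorbs the $Vol(K/K\cap Z(\mathbb Q))$ factor), as $\int_{O(V)(F)\backslash O(V)(\mathbb A)}\Theta(\iota(g_1,g_2),h;\alpha)\,dh$. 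As $V$ is anisotropic of dimension $m=n+2$, \autoref{thm:SiegelWeil} applied to $Sp_8$ identifies this, at $s_0=(n-3)/2$, with $\kappa_m^{-1}E(\iota(g_1,g_2),\Phi,s_0)$ for the section $\Phi$ attached to $\alpha$ as in \eqref{eq:def_Phi_v_thm_Rallis}, the Eisenstein series being holomorphic at $s_0$; a short computation shows that at finite places $\Phi$ is precisely the section $\Phi_{w_v^\vee,w_v}$ of \eqref{eq:def_Phi_v_wveew}, while at archimedean places $\tilde\varphi_\infty$ is a highest-weight vector for $\sigma_\infty$ by \autoref{subsection:K_infty_type_varphi}, so $\Phi_\infty$ agrees with $\Phi_{w_\infty^\vee,w_\infty}$ up to a Schur factor again absorbed into the constant.

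After this the $g_2$-integral has become the global doubling functional $Z_w(g_1;f,w^\vee,\Phi)$ of \autoref{subsection:Euler_factorization}, specialized at $s=s_0$. The multiplicity-one hypothesis $\dim_{\mathbb C}Hom_K(\sigma,\pi)=1$ needed to invoke \autoref{proposition:Euler_product} holds by \ref{hyp:main_thm_pairing}(3) together with the archimedean multiplicity one of \autoref{lemma:archimedean_K_type_one}, and that proposition gives $Z_w(g_1;f,w^\vee,\Phi)=f_{\iota_\pi(w\otimes w_\infty)}(g_1)\cdot\left.\frac{\Lambda(\pi^\vee,\chi_V,s+1/2)}{d(\chi_V,s)}\right|_{s=s_0}\cdot\prod_v Z^0_v(f_v^\vee,f_v,\delta_*\Phi_{w_v^\vee,w_v},\chi_{V,v},s_0)$, with almost all local factors equal to $1$ by \eqref{eq:proj_unramified_section}. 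Integrating against $\widetilde{\mathcal M}_T(g_1,s')$ over $N(F)\backslash N(\mathbb A)\times A(\mathbb R)^0$ then replaces $f_{\iota_\pi(w\otimes w_\infty)}(g_1)$ by $(\widetilde{\mathcal M}_T(s'),f_{\iota_\pi(w\otimes w_\infty)})^{reg}$ and factors out the remaining $g_1$-independent terms, yielding the asserted identity for $Re(s')\gg 0$ with $C$ the accumulated positive constant --- a product of $c'$ from \eqref{eq:def_c}, $\kappa_m^{-1}$, the measure-comparison constants for $GSpin(V)$, $SO(V)$ and $O(V)$, and an archimedean Schur factor --- which depends only on $V$. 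It remains to note that $\Lambda(\pi^\vee,\chi_V,s+1/2)/d(\chi_V,s)$ is finite and nonzero at $s_0$ by \autoref{prop:completed_L_fcn_regular} (applicable by \ref{hyp:main_thm_pairing}(1) and the cuspidality of theta lifts to the anisotropic $O(V)$); that both sides of the claimed identity are meromorphic in $s'$, so the identity extends to all $s'$; and that the right-hand side is independent of the choice of $\iota_\pi$, since rescaling $\iota_\pi$ by $c\in\mathbb C^\times$ multiplies $(\widetilde{\mathcal M}_T(s'),f_{\iota_\pi(w\otimes w_\infty)})^{reg}$ by $c$ and, by \eqref{eq:Z_functional_invariance}, multiplies $\prod_v Z^0_v(\cdots)$ by $c^{-1}$.

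The hard part will be the second step: identifying the wedge of the Kudla--Millson and Gaussian Schwartz forms against $\Omega^{n-2}$, evaluated on the fixed element $\mathbbm 1\in\bigwedge^{2n}\mathfrak p$, with the explicit Schwartz function $\alpha_\infty$ on $V(\mathbb R)^4$, and tracking all measure and volume constants through the chain $GSpin(V)\to SO(V)\to O(V)$ --- this is precisely where the $C(\mathbb A)$-invariance hypotheses are used, and where one must verify that the archimedean section $\Phi_\infty$, paired with $\widetilde{\mathcal M}_T$, lands within the scope of \autoref{proposition:Euler_product} (in particular that $\Phi_\infty(\iota(1,kg),s)$ is a matrix coefficient of $\sigma_\infty^\vee$, so that \autoref{cor:evaluation_nonunique_local_zeta_int} applies at the infinite place). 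A secondary technical point is justifying the interchange of the regularized integral $(\widetilde{\mathcal M}_T(s'),\,\cdot\,)^{reg}$ with the Siegel--Weil unfolding and the final meromorphic continuation in $s'$.
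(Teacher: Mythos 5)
Your proposal follows the paper's proof step for step: for $Re(s')\gg 0$ unfold the regularized theta lift, use the integration formula \eqref{eq:def_c} together with the $C(\mathbb A)$-invariance furnished by hypothesis \ref{hyp:main_thm_pairing}(5) and \autoref{lemma:C_R_invariance} to rewrite the $X_{K'}$-integral of the wedge of theta kernels as a theta integral over $O(V)(F)\backslash O(V)(\mathbb A)$, apply \autoref{thm:SiegelWeil} to obtain the Eisenstein series at $s_0$, and then invoke \autoref{proposition:Euler_product} and integrate against $\widetilde{\mathcal M}_T$. The technical points you flag --- the matrix-coefficient property of the archimedean section and the measure comparisons from $GSpin$ to $O(V)$ --- are exactly where the paper's proof concentrates its care, so your assessment of the load-bearing steps is also accurate.
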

%
\begin{remarks} \phantomsection \label{remark:on_thm_Rallis_pairing}
\begin{enumerate} 
\item By \autoref{lemma:archimedean_K_type_one}, the representation $\sigma_\infty$ appears with multiplicity one in $\pi_\infty$. Thus hypothesis \ref{hyp:main_thm_pairing}.$(2)$ implies that $\dim_\mathbb{C}Hom_K(\sigma,\pi)=1$. It then follows from \autoref{lemma:int_independence} and \eqref{eq:Z_functional_invariance} that the right hand side is independent of the choice of $f^\vee$ such that $(\iota_\pi(w \otimes w_\infty),f^\vee)=1$ and of the choice of embedding $\iota_\pi$.
\item By \autoref{prop:completed_L_fcn_regular}, the meromorphic function $d(\chi_V,s)^{-1}\Lambda(\pi^\vee,\chi_V,s+1/2)$ is regular and non-vanishing at $s_0$.
\item Hypothesis \ref{hyp:main_thm_pairing}.$(3)$ is necessary to obtain a Euler product ranging over all places $v$ of $F$ in the Theorem. Without assuming the existence of a pair $(\sigma,K)$ appearing with multiplicity one in $\pi$, one only obtains a partial Euler product ranging over unramified places $v$ of $F$. This is essentially the same phenomenon as in \cite{PSRNew} and is related to the fact that the Rankin-Selberg integral unfolds to a non-unique model that does not enjoy a multiplicity one property. Piatetskii-Shapiro and Rallis use the uniqueness of spherical vectors at unramified places to show that their Rankin-Selberg integral factors as a product involving an incomplete $L$-function, and Hypothesis $(3)$ allows to generalize their method to every place $v$ in $F$ and hence obtain the complete $L$-function.

\item In \autoref{section:Example_Products_Shimura_curves} we will consider an example where all the hypotheses are satisfied.
\end{enumerate}
\end{remarks}

Define
\begin{equation} \label{eq:def_Beil_period}
I(T,\tilde{\varphi}_{w^\vee,w};f,\varphi)=CT_{s'=s_0'}(\widetilde{\mathcal{M}}_T(s'),f_{\iota_\pi(w \otimes w_\infty)})^{reg} \cdot \prod_v Z^0(f^\vee_v,f_v,\delta_*\Phi_{w_v^\vee,w_v},\chi_{V,v},s_0).
\end{equation}
Recall that we have shown in \autoref{subsection:reps_and_cohomology} that the hypotheses in \ref{hyp:main_thm_pairing} imply that $\theta(f,\varphi \otimes \varphi_\infty)$ is closed. Combining \autoref{thm:Rallis_pairing} and 
\cite[(3.83)]{GarciaRegularizedLiftsI}, we obtain the following Corollary.
\begin{corollary} \label{cor:thm_Rallis_pairing}
Assume that the hypotheses in the theorem hold and that $\tilde{\varphi}_{w^\vee,w}^\iota=\tilde{\varphi}_{w^\vee,w}$. Then
\begin{equation*}
\begin{split}
([\Phi(T,\tilde{\varphi}_{w^\vee,w})]-[\Phi(T^\iota,\tilde{\varphi}_{w^\vee,w}^\iota)],\overline{\theta(f,\varphi\otimes \varphi_\infty)})&=C \cdot \left. \frac{\Lambda(\pi^\vee,\chi_V,s+1/2)}{d(\chi_V,s)}  \right|_{s=s_0} \\
& \quad \cdot (I(T,\tilde{\varphi}_{w^\vee,w};f,\varphi)-I(T^\iota,\tilde{\varphi}_{w^\vee,w};f,\varphi)).
\end{split}
\end{equation*}
\end{corollary}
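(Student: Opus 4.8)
The plan is to deduce the Corollary by combining \autoref{thm:Rallis_pairing} with the comparison of currents \cite[(3.83)]{GarciaRegularizedLiftsI}; no genuinely new computation is needed. First I would recall that the hypotheses in \ref{hyp:main_thm_pairing}, specifically part $(2)$, force (via the discussion in \autoref{subsection:reps_and_cohomology}) the Casimir eigenvalue on $\theta(\pi)$ to vanish, so that $\theta(f,\varphi\otimes\varphi_\infty)$ is harmonic and hence, $X_K$ being compact, closed; thus $\overline{\theta(f,\varphi\otimes\varphi_\infty)}$ is a closed form and may legitimately be paired with the currents $[\Phi(T,\tilde\varphi_{w^\vee,w})]$, $[\Phi(T^\iota,\tilde\varphi_{w^\vee,w}^\iota)]$ and with the meromorphic family $s'\mapsto[\tilde\Phi(T,\tilde\varphi_{w^\vee,w},s')]$. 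This closedness is precisely the hypothesis under which \cite[(3.83)]{GarciaRegularizedLiftsI} expresses $([\Phi(T,\varphi)]-[\Phi(T^\iota,\varphi^\iota)],\alpha)$, for a closed test form $\alpha$, in terms of the constant term at $s'=s_0'$ of $([\tilde\Phi(T,\varphi,s')],\alpha)$ and its $\iota$-twist. Applying this with $\alpha=\overline{\theta(f,\varphi\otimes\varphi_\infty)}$ and using the hypothesis $\tilde\varphi_{w^\vee,w}^\iota=\tilde\varphi_{w^\vee,w}$ reduces the Corollary to computing $CT_{s'=s_0'}([\tilde\Phi(T,\tilde\varphi_{w^\vee,w},s')],\overline{\theta(f,\varphi\otimes\varphi_\infty)})$ and the same quantity with $T$ replaced by $T^\iota$.

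Next I would feed \autoref{thm:Rallis_pairing} into this. The hypotheses in \ref{hyp:main_thm_pairing} that involve the Schwartz data are stable under replacing $T$ by $T^\iota$ — this again uses $\tilde\varphi_{w^\vee,w}^\iota=\tilde\varphi_{w^\vee,w}$ together with the $C(\mathbb{A}^\infty)$-invariance of \ref{hyp:main_thm_pairing}.$(5)$ — so the theorem applies verbatim to both $(T,\tilde\varphi_{w^\vee,w})$ and $(T^\iota,\tilde\varphi_{w^\vee,w})$. In the resulting formula the only factor depending on $s'$ is $(\widetilde{\mathcal{M}}_T(s'),f_{\iota_\pi(w\otimes w_\infty)})^{reg}$, whereas the $L$-factor $\frac{\Lambda(\pi^\vee,\chi_V,s+1/2)}{d(\chi_V,s)}\big|_{s=s_0}$ and the local zeta integrals $Z^0_v(f^\vee_v,f_v,\delta_*\Phi_{w_v^\vee,w_v},\chi_{V,v},s_0)$ depend only on $\pi$, $\chi_V$, the archimedean data and $\tilde\varphi_{w^\vee,w}$ — not on $T$ — hence are common to the two cases. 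Taking $CT_{s'=s_0'}$ of the identity of \autoref{thm:Rallis_pairing} therefore pulls these $T$-independent constants out of the constant term and leaves $CT_{s'=s_0'}(\widetilde{\mathcal{M}}_T(s'),f_{\iota_\pi(w\otimes w_\infty)})^{reg}$, which is exactly the factor entering the definition \eqref{eq:def_Beil_period} of $I(T,\tilde\varphi_{w^\vee,w};f,\varphi)$. Thus
\[
CT_{s'=s_0'}([\tilde\Phi(T,\tilde\varphi_{w^\vee,w},s')],\overline{\theta(f,\varphi\otimes\varphi_\infty)})=C\cdot\left.\frac{\Lambda(\pi^\vee,\chi_V,s+1/2)}{d(\chi_V,s)}\right|_{s=s_0}\cdot I(T,\tilde\varphi_{w^\vee,w};f,\varphi),
\]
and the same with $T^\iota$ in place of $T$; subtracting and substituting into the identity from the first step yields the Corollary.

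The one point that I expect to deserve attention — the ``hard part'', such as it is — is the interchange of $CT_{s'=s_0'}$ with the pairing $(\,\cdot\,,\overline{\theta(f,\varphi\otimes\varphi_\infty)})$. For this one needs that \autoref{thm:Rallis_pairing} holds as an identity of meromorphic functions of $s'$ on a punctured neighbourhood of $s_0'$, not merely pointwise; this is so because, at finite level $K$, the pairing \eqref{eq:def_pairing_limit} is an absolutely convergent integral of the meromorphic family $\tilde\Phi(T,\tilde\varphi_{w^\vee,w},s')_K$ against a fixed smooth form, hence meromorphic in $s'$ with Laurent expansion obtained termwise. The remaining ingredient, that $\overline{\theta(f,\varphi\otimes\varphi_\infty)}$ is closed so that \cite[(3.83)]{GarciaRegularizedLiftsI} is applicable at all, was already settled in \autoref{subsection:reps_and_cohomology}. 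Neither is a real obstacle; the content of the Corollary is entirely carried by \autoref{thm:Rallis_pairing} and \cite[(3.83)]{GarciaRegularizedLiftsI}.
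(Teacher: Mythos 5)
Your proposal matches the paper's own (very brief) derivation: the paper simply observes that the hypotheses force $\theta(f,\varphi\otimes\varphi_\infty)$ to be closed, then invokes Theorem~\ref{thm:Rallis_pairing} and \cite[(3.83)]{GarciaRegularizedLiftsI}. You have filled in the intermediate bookkeeping (isolating the $s'$-dependence, taking $CT_{s'=s_0'}$, subtracting the $T^\iota$ version) correctly and in the same spirit.
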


\subsection{Proof of \autoref{thm:Rallis_pairing}} \label{subsection:proof_thm_Rallis_pairing} Write $G=Sp_{4,F}$. Let $K' \subset H(\mathbb{A}_f)$ be an open compact subgroup fixing $\varphi$ and $\tilde{\varphi}_{w^\vee,w}$ and write $v(K')=Vol(K'/K' \cap Z(\mathbb{Q}))$. By 
\cite[Prop. 3.27]{GarciaRegularizedLiftsI} we have, for $Re(s') \gg 0$:
\begin{equation*}
\begin{split}
([\tilde{\Phi}(T,\tilde{\varphi}_{w^\vee,w},s')], & \overline{\theta(f,\varphi \otimes \varphi_\infty)}) \\
& = v(K') \cdot \int_{X_{K'}} \tilde{\Phi}(T,\tilde{\varphi}_{w^\vee,w},s')_{K'} \wedge \overline{\theta(f,\varphi \otimes \varphi_\infty)_{K'}} \\
& = v(K') \cdot \int_{A(\mathbb{R})^0}\int_{N(F)\backslash N(\mathbb{A})} \widetilde{\mathcal{M}}_T(na,s') \\
& \qquad \qquad \qquad \qquad \qquad \cdot \int_{X_{K'}} \theta(na;\tilde{\varphi}_{w^\vee,w} \otimes \tilde{\varphi}_\infty)_{K'} \wedge \overline{\theta(f,\varphi \otimes \varphi_\infty)_{K'}} \ dn da.
\end{split}
\end{equation*}
The integral over $X_{K'}$ in the last line equals
\begin{equation*}
\int_{G(F) \backslash G(\mathbb{A}_F)}f(g) \int_{X_{K'}} \theta(na;\tilde{\varphi}_{w^\vee,w} \otimes \tilde{\varphi}_\infty)_{K'} \wedge \overline{\theta(g;\varphi \otimes \varphi_\infty)_{K'}} \ dg.
\end{equation*}
The inner integral in this expression can be rewritten as an integral over the adelic points of $H$ using \eqref{eq:def_c}. 
Namely, let $\iota:Sp_4 \times Sp_4 \rightarrow Sp_8$ be the embedding given by \eqref{eq:def_iota} and denote by $\tilde{\omega}=\tilde{\omega}_\psi$ the Weil representation of $Sp_8(\mathbb{A})$ on $\mathcal{S}(V(\mathbb{A}^4))$ associated with $\psi$. Then
\[
\tilde{\omega}_\psi \circ \iota = \omega_\psi \boxtimes \omega^*_\psi,
\]
where $\omega^*_\psi$ denotes the contragredient of $\omega_\psi$ and is isomorphic to $\omega_{\overline{\psi}}$. Consider the Schwartz form $\alpha_{w^\vee,w}=\alpha_{w^\vee,w}^\infty \otimes \alpha_\infty \in \mathcal{S}(V(\mathbb{A})^4)$, with $\alpha_\infty$ defined by \eqref{eq:def_alpha_v_arch} and 
$$
\alpha_{w^\vee,w}^\infty=\tilde{\varphi}_{w^\vee,w} \otimes \overline{\varphi} \in \mathcal{S}(V(\mathbb{A}^\infty)^4).
$$
For $g \in Sp_8(\mathbb{A}_F)$ and $h \in O(V)(\mathbb{A})$, define a theta function $\theta(g,h;\alpha_{w^\vee,w})$ by
\begin{equation*}
\theta(g,h;\alpha)=\sum_{v \in V(F)^4}\tilde{\omega}_\psi(g,h)\alpha_{w^\vee,w}(v).
\end{equation*}
Then the volume form
\begin{equation*}
\theta(na;\tilde{\varphi}_{w^\vee,w} \otimes \tilde{\varphi}_\infty)_{K'} \wedge \overline{\theta(g;\varphi \otimes \varphi_\infty)_{K'}} \in \mathcal{A}^{n,n}(X_{K'})
\end{equation*}
corresponds to $\theta(\iota(na,g),\cdot;\alpha_{w^\vee,w})$ under the isomorphism \eqref{eq:isom_volume_forms_aut_forms}, so that \eqref{eq:def_c} implies
\begin{equation} \label{eq:from_intX_K_to_int_O(V)_1}
\begin{split}
\int_{X_{K'}} \theta(na;\tilde{\varphi}_{w^\vee,w} \otimes \tilde{\varphi}_\infty)_{K'} \wedge \overline{\theta(g;\varphi \otimes \varphi_\infty)_{K'}}&= c' \cdot v(K')^{-1} \\
&\quad \cdot \int_{H(\mathbb{Q})Z(\mathbb{R}) \backslash H(\mathbb{A})}\theta(\iota(na,g),h;\alpha_{w^\vee,w})dh,
\end{split}
\end{equation}
where $dh$ denotes the Tamagawa measure on $H$ and $c'>0$ is a constant independent of $K$. Note that the integrand is in fact invariant under $Z(\mathbb{A})$, so that writing $dh^0$ (resp. $dz$) for the Tamagawa measure on $H^0=Z\backslash H$ (resp. $Z$), we have
\begin{equation*} \label{eq:from_intX_K_to_int_O(V)_2}
\eqref{eq:from_intX_K_to_int_O(V)_1}=Vol(Z(\mathbb{Q})Z(\mathbb{R}) \backslash Z(\mathbb{A}),dz) \cdot \int_{H^0(\mathbb{Q}) \backslash H^0(\mathbb{A})} \theta(\iota(na,g),h^0;\alpha_{w^\vee,w})dh^0.
\end{equation*}
Since by hypothesis \ref{hyp:main_thm_pairing}.$(4)$ and \autoref{lemma:C_R_invariance} the Schwartz function $\alpha_{w^\vee,w}$ is invariant under $C(\mathbb{A})$, we can replace the integral over $H^0(\mathbb{Q}) \backslash H^0(\mathbb{A})$ in \eqref{eq:from_intX_K_to_int_O(V)_2} with an integral over $H'(\mathbb{Q}) \backslash H'(\mathbb{A})$. Note that if we denote by $dh'$ the Haar measure on $H'(\mathbb{A})$ giving $H'(\mathbb{Q}) \backslash H'(\mathbb{A})$ unit volume, then $Vol(C(\mathbb{Q}) \backslash C(\mathbb{A}),dh'/dh^0)=1/2$, since the Tamagawa measure of $H^0$ equals $2$. It follows that
\begin{equation*}
\begin{split}
\int_{H'(\mathbb{Q}) \backslash H'(\mathbb{A})} \theta(\iota(na,g),h';\alpha_{w^\vee,w})dh' &=\int_{C(\mathbb{Q}) \backslash C(\mathbb{A})}\int_{H^0(\mathbb{Q}) \backslash H^0(\mathbb{A})} \theta(\iota(na,g),h^0 c;\alpha_{w^\vee,w})dh^0 dc \\
&= \frac{1}{2} \cdot \int_{H^0(\mathbb{Q}) \backslash H^0(\mathbb{A})} \theta(\iota(na,g),h^0;\alpha_{w^\vee,w})dh^0
\end{split}
\end{equation*}
and hence
\begin{equation*}\begin{split}
\eqref{eq:from_intX_K_to_int_O(V)_1}= c \cdot v(K')^{-1} \cdot \int_{H'(\mathbb{Q}) \backslash H'(\mathbb{A})} \theta(\iota(na,g),h';\alpha_{w^\vee,w})dh',
\end{split}
\end{equation*}
where 
\begin{equation*}
c=2 \cdot c' \cdot Vol(Z(\mathbb{Q})Z(\mathbb{R}) \backslash Z(\mathbb{A}),dz).
\end{equation*}
The integral on the right hand side can be evaluated using the Siegel-Weil formula. Namely, note that the section of $Ind_{P(\mathbb{A})}^{Sp_8(\mathbb{A})}(\chi_V |\cdot|^s)$ attached to $\alpha_{w^\vee,w}$ is given by
\[
\Phi_{w^\vee,w}= \prod_v \Phi_{w_v^\vee,w_v}.
\]
Let $E(g,\Phi_{w^\vee,w},s)$ be the Eisenstein series associated with $\Phi_{w^\vee,w}$ as in \eqref{eq:def_Eisenstein_series} and define
\begin{equation*}
Z_w(na;f,w^\vee,\Phi(s))=\int_{G(F) \backslash G(\mathbb{A}_F)}f(g)E(\iota(na,g),\Phi_{w^\vee,w},s)dg.
\end{equation*}
By \autoref{thm:SiegelWeil}, our discussion so far shows that
\begin{equation*}
([\tilde{\Phi}(T,\tilde{\varphi}_{w^\vee,w},s')],\overline{\theta(f,\varphi \otimes \varphi_\infty)})=C \cdot \int_{A(\mathbb{R})^0}\int_{N(F)\backslash N(\mathbb{A})} \widetilde{\mathcal{M}}_T(na,s') Z_w(na;f,w^\vee,\Phi(s_0)) dn da,
\end{equation*}
where $s_0=(n-3)/2$ and $C=c \cdot \kappa_{n+2}^{-1}$ with $\kappa_4=2$ and $\kappa_{n+2}=1$ for $n>2$. Now \autoref{proposition:Euler_product} shows that
\begin{equation*}
Z_w(na;f,w^\vee,\Phi(s))=f_{\iota_\pi(w \otimes w_\infty)}(na) \cdot \frac{\Lambda(\pi^\vee,\chi_V,s+1/2)}{d(\chi_V,s)} \cdot \prod_v Z^0(f_v^\vee,f_v,\delta_*\Phi_{w_v^\vee,w_v},\chi_{V,v},s)
\end{equation*}
for any $s$ outside the set of poles of the Eisenstein series; the result follows from this. 

\section{An example: Products of Shimura curves} \label{section:Example_Products_Shimura_curves}

In this section we focus on a special case of the results above: when the Shimura variety $X_K$ is a product of Shimura curves attached to a quaternion algebra $B$ over $\mathbb{Q}$. This case was addressed in \cite[\textsection 4]{GarciaRegularizedLiftsI}; there we computed explicitly some currents $[\Phi(T,\varphi)]$ in terms of CM points and Hecke correspondences. The main goal is to give a concrete example where the hypotheses \ref{hyp:main_thm_pairing} hold and to prove \autoref{thm:main_thm_shimura_curves}.


Throughout this section, we fix an indefinite quaternion algebra $B$ over $\mathbb{Q}$; we assume that $B \ncong M_2(\mathbb{Q})$. We write $S$ for the set of places where $B$ ramifies and $d(B)$ for the discriminant of $B$. Denote by $n: B \rightarrow F$ the reduced norm and consider the vector space $B$ endowed with the quadratic form given by $Q(v)=n(v)$. Then $(B,Q)$ is a non-degenerate quadratic space over $\mathbb{Q}$ with signature $(2,2)$ and $\chi_V=1$. For basic results on the Shimura variety attached to $GSpin(B)$ we refer to \cite[\textsection 4.1]{GarciaRegularizedLiftsI}.

\subsection{Theta correspondence for $(GSp_4,GO(V))$} \label{subsection:theta_corr_GSp4} Let $(V,Q)$ be a quadratic vector space over a totally real field $F$. Consider the similitude groups
\begin{equation} \label{eq:def_GSp_4_GO(V)}
\begin{split}
GSp_4&=\{g \in GL_4 | {}^tg J g =\nu(g)J, \quad \nu(g) \in \mathbb{G}_m  \} \\
GO(V)&=\{g \in GL(V) | Q(gv) =\nu(g)Q(v), \quad v \in V, \quad \nu(g) \in \mathbb{G}_m  \}.
\end{split}
\end{equation}
Then $\nu:GSp_4 \rightarrow \mathbb{G}_m$ (resp. $\nu:GO(V) \rightarrow \mathbb{G}_m$) is a character with kernel equal to $Sp_4$ (resp. $O(V)$). We review some results concerning the theta correspondence for the pair $(GSp_4,GO(V))$.

\subsubsection{Representations of $GO(V)$} Let $\tilde{H}=GO(V)$ and $\tilde{H}^0=GSO(V)$ be the identity component of $\tilde{H}$; we regard $H$ and $\tilde{H}^0$ as algebraic groups over $F$. Recall that
\begin{equation}\label{eq:GSO(V)}
\tilde{H}^0 \cong \mathbb{G}_m \backslash B^\times \times B^\times.
\end{equation}
Let $\pi_v$ be a smooth irreducible representation of $\tilde{H}^0(F_v)$; for $F_v=\mathbb{R}$, this means an irreducible admissible $(\mathfrak{g},K)$-module. It follows from the isomorphism \eqref{eq:GSO(V)} that we can identify $\pi_v$ with $\pi_{1,v} \boxtimes \pi_{2,v}$, where $\pi_{i,v}$, $i=1,2$ are smooth irreducible representations of $B^\times(F_v)$ with central characters $\omega_i$ satisfying $\omega_1 \cdot \omega_2=1$. Given such $\pi_{1,v},\pi_{2,v}$, we denote by $(\pi_{1,2})_v$ the corresponding representation of $\tilde{H}^0(F_v)$. Similarly, any cuspidal automorphic representation $\pi$ of $\tilde{H}^0(\mathbb{A})$ is of the form $\pi_{1,2} \cong \pi_1 \boxtimes \pi_2$, where $\pi_1$ and $\pi_2$ are cuspidal automorphic representations of $B^\times(\mathbb{A})$ whose central characters $\omega_1, \omega_2$ satisfy $\omega_1\cdot\omega_2=1$.

Given an irreducible representation $(\pi_{1,2})_v$ of $\tilde{H}^0(F_v)$, consider the induced representation $Ind_{\tilde{H}^0(F_v)}^{\tilde{H}(F_v)}(\pi_{1,2})_v$. If $\pi_{1,v} \ncong \pi_{2,v}^\vee$, then this representation is irreducible, and we denote it by $(\pi^+_{1,2})_v$. Otherwise, $(\pi_{1,2})_v$ admits two different extensions to $\tilde{H}(F_v)$. Denote them by $(\pi^+_{1,2})_v$ and $(\pi^-_{1,2})_v$, with $(\pi^+_{1,2})_v$ the unique such extension satisfying
\begin{equation}
Hom_{H_1}((\pi^+_{1,2})_v,\mathbb{C}) \neq 0,
\end{equation}
where $H_1=O(V_1)$ and $V_1$ denotes the orthogonal complement of the vector $1 \in V(F_v)$. Denote by $\mathcal{R}_2(\tilde{H}(F_v))$ the set of irreducible admissible representations of $\tilde{H}(F_v)$ (up to isomorphism) whose restriction to $O(V(F_v))$ is multiplicity free and has an irreducible constituent that appears as a non-zero quotient of $\mathcal{S}(V(F_v)^2)$. Then we have
\begin{equation} \label{eq:local_theta_sign}
(\pi_{1,2}^+)_v \in \mathcal{R}_2(\tilde{H}(F_v)), \quad (\pi_{1,2}^-)_v \notin \mathcal{R}_2(\tilde{H}(F_v))
\end{equation}
for any irreducible representation $(\pi_{1,2})_v$ of $\tilde{H}^0(F_v)$; this is \cite[Theorem 3.4]{RobertsGlobalLpackets}.



\subsubsection{Local correspondence} Following \cite[\textsection 5]{HarrisKudlaGSp2}, we recall the extension of the local theta correspondence to similitude groups. Define
\begin{equation}
R=G(Sp_4 \times O(V))=\{(g,h) \in GSp_4 \times GO(V) \ | \ \nu(g)=\nu(h) \}.
\end{equation}
For any place $v$ of $F$, the action of $Sp_4(F_v) \times O(V(F_v))$ on $\mathcal{S}(V(F_v)^2)$ can be extended to an action of $R(F_v)$ that we still denote by $\omega$. Namely, for $h \in \tilde{H}(F_v)$, define
\begin{equation}
L(h)\varphi(x)=|\nu(h)|^{-4} \cdot \varphi(h^{-1}x)
\end{equation}
and for $(g,h) \in R(F_v)$, define
\begin{equation} \label{eq:def_extension_theta_R}
\omega(g,h)\varphi(x)=\omega\left( g \left( \begin{array}{cc} 1_2 & \\ & \nu(g)^{-1} 1_2 \end{array} \right),1 \right) L(h)\varphi(x).
\end{equation}

Assume first that $F_v=\mathbb{R}$ or $F_v$ is non-archimedean of odd residue characteristic. The results of Roberts extending the proof of the Howe duality conjecture to similitude groups show that for any irreducible representation $(\pi_{1,2})_v$ of $\tilde{H}^0(F_v)$, there exists a unique smooth irreducible representation $\theta((\pi_{1,2})_v)$ of $GSp_4(F_v)$ characterized by the condition
\begin{equation}
Hom_{R(F_v)}(\mathcal{S}(V(F_v)^2),\theta((\pi_{1,2})_v) \boxtimes (\pi_{1,2}^+)_v) \neq 0.
\end{equation}
The representation $\theta((\pi_{1,2})_v)$ is known as the local theta lift of $(\pi_{1,2})_v$. If $F_v$ is non-archimedean of residue characteristic $2$, then the same result holds provided that we assume $(\pi_{1,2})_v$ to be tempered (see \cite[Thm. 1.8]{RobertsGlobalLpackets}). 

We now describe the representation $\theta((\pi_{1,2})_v)$ for non-archimedean $v$ explicitly in some cases, following \cite{GanTakedaThetaGSp4}. To do so, we need to introduce certain induced representations of $GSp_4(F_v)$. Namely, let $T(F_v)$ be the maximal torus of diagonal matrices in $GSp_4(F_v)$. Characters of $T(F_v)$ can be identified with triples of characters $(\chi_1,\chi_2,\chi)$; the character of $T(F_v)$ corresponding to such a triple is given by
\begin{equation}
\left( \begin{array}{cccc} a_1 & & & \\ & a_2 & & \\ & & a a_1^{-1} & \\ & & & aa_2^{-1} \end{array} \right) \mapsto \chi_1(a_1)\chi(a_2)\chi(a).
\end{equation}
Let $B(F_v)=T(F_v)U(F_v)$ be a Borel subgroup of $GSp_4(F_v)$ containing $T(F_v)$ and having unipotent radical $U(F_v)$. For characters $\chi_1,\chi_2,\chi$ of $F_v^\times$, let $I_B(\chi_1,\chi_2;\chi)$ denote the normalized parabolic induction. This representation has finite length, and if the characters are unramified, then it has a unique subquotient admitting a non-zero $GSp_4(\mathcal{O}_v)$-invariant vector; we denote this subquotient by $\pi_B(\chi_1,\chi_2;\chi)$. 

Let $Q(F_v) \subset GSp_4(F_v)$ be the parabolic subgroup stabilizing a fixed line in $F_v^4$. The group $Q(F_v)$ has a Levi decomposition $Q(F_v)=L(F_v)U_Q(F_v)$, with $U_Q(F_v)$ the unipotent radical of $Q(F_v)$ and $L(F_v)$ a Levi subgroup. We have $L(F_v) \cong GL_1(F_v) \times  GL_2(F_v)$, and hence irreducible smooth representations of $L(F_v)$ are of the form $\chi \boxtimes \tau$, where $\chi$ is a character of $GL_1(F_v)$ and $\tau$ is a smooth irreducible representation of $GL_2(F_v)$. We denote by $I_{Q(F_v)}(\chi,\tau)$ the representation of $GSp_4(F_v)$ obtained by normalized parabolic induction of such a representation of $L(F_v)$.

For characters $\chi,\chi'$ of $F_v^\times$, denote by $\pi(\chi,\chi')$ the principal series representation of $GL_2(F_v)$ obtained by normalized induction from the standard Borel subgroup of upper triangular matrices.

\begin{proposition} \label{prop:local_theta_GO4} Let $F_v$ be non-archimedean and $(\pi_{1,2})_v=\pi_{1,v} \boxtimes \pi_{2,v}$ be a smooth irreducible representation of $\tilde{H}^0(F_v)$. If $F_v$ has even residue characteristic, assume that $(\pi_{1,2})_v$ is tempered.
\begin{enumerate}
\item Assume that $B_v$ is split and that $\pi_{1,v}$, $\pi_{2,v}$ are unramified and pre-unitary. Write $\pi_{1,v} \subset \pi(\chi_1,\chi_1')$ and $\pi_{2,v} \subset \pi(\chi_2,\chi_2')$ for unramified characters $\chi_i,\chi_i':F_v^\times \rightarrow \mathbb{C}^\times$ and $i=1,2$. Then
\begin{equation*}
\theta((\pi_{1,2})_v)=\pi_B(\chi_2'\chi_1',\chi_2\chi_1';\chi_1).
\end{equation*} 
\item Assume that $B_v$ is split and that $\pi_{1,v} \boxtimes \pi_{2,v}=\tau \boxtimes \tau^\vee$, where $\tau$ is a discrete series representation of $GL_2(F_v)$. Then $\theta((\pi_{1,2})_v)$ is an irreducible constituent of $I_{Q(F_v)}(1,\tau)$.
\item Assume that $B_v$ is ramified and that $\pi_{1,v} \boxtimes \pi_{2,v}=\tau \boxtimes \tau^\vee$ for some representation $\tau$ of $B_v^\times$. Let $\tau^{JL}$ be the discrete series representation of $GL_2(F_v)$ attached to $\tau$ by the local Jacquet-Langlands correspondence. Then $\theta((\pi_{1,2})_v)$ is an irreducible constituent of $I_{Q(F_v)}(1,\tau^{JL})$.
\end{enumerate}
\end{proposition}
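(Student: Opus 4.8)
The plan is to deduce all three statements from the explicit determination of the local theta correspondence for the dual pair $(GSp_4,GO(V))$ with $\dim V=4$ carried out by Gan and Takeda \cite{GanTakedaThetaGSp4}, after translating our setup into theirs. First I would recall the reduction of the similitude theta lift to the isometry theta lift: by the results of Roberts recalled above, $\theta((\pi_{1,2})_v)$ is the unique irreducible representation of $GSp_4(F_v)$ that occurs, paired with $(\pi_{1,2}^+)_v$, as a quotient of the Weil representation $\mathcal{S}(V(F_v)^2)$ of $R(F_v)$; it depends only on $(\pi_{1,2})_v$ and not on the chosen extension, and it is obtained from the $(Sp_4,O(V))$-correspondence by restriction of the Weil representation to $Sp_4(F_v)\times O(V)(F_v)$ followed by induction from $GSO(V)(F_v)$ to $GO(V)(F_v)$. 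This also makes transparent that the temperedness hypothesis in the even residue characteristic case is needed only so that Howe duality for similitude groups is available, after which the analysis below is unchanged.

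For case (1) --- $B_v$ split and all data unramified and pre-unitary --- both $\theta((\pi_{1,2})_v)$ and $\pi_B(\chi_2'\chi_1',\chi_2\chi_1';\chi_1)$ are unramified representations of $GSp_4(F_v)$, hence are determined by their Satake parameters. I would therefore invoke the compatibility of the unramified theta correspondence with the Satake isomorphism (going back to Howe, Rallis and Kudla), together with the exceptional isomorphism $GSO(V)(F_v)\cong\mathbb{G}_m\backslash(GL_2(F_v)\times GL_2(F_v))$ of \eqref{eq:GSO(V)} and the resulting homomorphism of dual groups, and check that the Satake parameter attached to $\pi_{1,v}\boxtimes\pi_{2,v}$, with $\pi_{i,v}\subset\pi(\chi_i,\chi_i')$, is exactly that of $\pi_B(\chi_2'\chi_1',\chi_2\chi_1';\chi_1)$. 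The pre-unitarity hypothesis serves here only to guarantee that the local theta lift is nonzero and irreducible, so that the Howe correspondence applies.

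For cases (2) and (3), in which $(\pi_{1,2})_v=\tau\boxtimes\tau^\vee$ is (essentially) self-dual and its two extensions to $GO(V)(F_v)$ are distinguished by \eqref{eq:local_theta_sign}, I would cite the corresponding result of Gan--Takeda computing $\theta(\tau\boxtimes\tau^\vee)$: it is an irreducible constituent of the representation $I_{Q(F_v)}(1,\tau)$ obtained by normalized induction from the Klingen parabolic when $B_v$ is split and $\tau$ is a discrete series representation of $GL_2(F_v)$, and of $I_{Q(F_v)}(1,\tau^{JL})$ when $B_v$ is ramified, with $\tau^{JL}$ the Jacquet--Langlands transfer of $\tau$. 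The occurrence of $\tau^{JL}$ reflects the fact that restricting the Weil representation for $(Sp_4,O(B_v))$ to the stabilizer of a split plane in $B_v$ relates the $GSO(B_v)$-correspondence to the $GL_2$-correspondence through Jacquet--Langlands. I would also use \eqref{eq:local_theta_sign} to confirm that the constituent singled out by the $(\pi_{1,2}^+)_v$-isotypic condition defining $\theta((\pi_{1,2})_v)$ is the one named above, rather than a Langlands quotient associated with the other extension $(\pi_{1,2}^-)_v$.

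The step I expect to be the main obstacle is not substantive but one of careful bookkeeping: matching the conventions of \cite{GanTakedaThetaGSp4} with ours --- their normalization of parabolic induction and their labeling of the Siegel and Klingen parabolics of $GSp_4$, the Witt-tower and discriminant conventions for four-dimensional quadratic spaces, the precise form of \eqref{eq:GSO(V)} (in particular which of $\chi_1,\chi_1',\chi_2,\chi_2'$ plays which role and where contragredients enter), and the compatibility of the $\pm$-extensions of \eqref{eq:local_theta_sign} with the sign conventions used in defining $\theta((\pi_{1,2})_v)$. Pinning down that the Satake parameter in case (1) is exactly $(\chi_2'\chi_1',\chi_2\chi_1';\chi_1)$, and not a Weyl conjugate or a twist of it, requires keeping careful track of these normalizations.
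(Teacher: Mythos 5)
Your proposal and the paper's proof both rest on Gan--Takeda's determination of the local theta correspondence for $(GSp_4,GO(V))$, and for cases $(2)$ and $(3)$ they coincide: the paper simply cites \cite[Thm.\ 8.1, Thm.\ 8.2]{GanTakedaThetaGSp4}. Where you diverge is case $(1)$. The paper pins down the theta lift in the unramified case by citing a separate result --- \cite[Lemma 11]{HarrisSoudryTaylor} --- that $\theta((\pi_{1,2})_v)$ is unramified whenever $(\pi_{1,2})_v$ is unramified and pre-unitary; together with the Gan--Takeda description of the lift as a subquotient of a principal series, this forces $\theta((\pi_{1,2})_v)$ to be the unique unramified constituent $\pi_B(\cdots)$. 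You instead propose to compute the Satake parameter of the lift from scratch via the compatibility of the unramified theta correspondence with the Satake isomorphism; this is a legitimate alternative (it is, in effect, how one would reprove the Gan--Takeda/HST inputs), but it is more labour-intensive and more exposed to exactly the convention errors you flag.

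Your paragraph on convention-matching correctly identifies what the paper treats as the one genuinely delicate point, but you leave it as a warning rather than resolving it. The paper handles it in a single parenthetical: representations of $\tilde{H}^0(F_v)$ in \cite{GanTakedaThetaGSp4} are parametrized by pairs $(\pi_{1,v},\pi_{2,v})$ with \emph{equal} central characters, whereas here the central characters are required to be \emph{inverse}; so a pair in Gan--Takeda's sense corresponds to $\pi_{1,v}\boxtimes\pi_{2,v}^\vee\cong\pi_{1,v}\boxtimes(\pi_{2,v}\otimes\omega_{\pi_{1,v}}^{-1})$ in the present notation. If you are going to present this as a proof rather than a plan, that dictionary needs to be written down --- without it, the specific characters $(\chi_2'\chi_1',\chi_2\chi_1';\chi_1)$ in case $(1)$, and the appearance of $\tau^\vee$ (rather than $\tau$) in cases $(2)$--$(3)$, cannot be verified. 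Apart from this omission, your outline is compatible with the paper's argument.
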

\begin{proof}
This follows from \cite[Thm. 8.1, Thm. 8.2]{GanTakedaThetaGSp4}, together with the fact that $\theta((\pi_{1,2})_v)$ is unramified provided that $(\pi_{1,2})_v$ is unramified and pre-unitary (see \cite[Lemma 11]{HarrisSoudryTaylor}). (Note that representations of $\tilde{H}^0(F_v)$ in \cite{GanTakedaThetaGSp4} are identified with pairs $\pi_{1,v}$, $\pi_{2,v}$ of representations of $B_v^\times$ with the same central character; such a pair corresponds in our description to the representation $\pi_{1,v} \boxtimes \pi_{2,v}^\vee \cong\pi_{1,v} \boxtimes (\pi_{2,v} \otimes \omega_{\pi_{1,v}}^{-1})$.)
\end{proof}

Denote by $Si(\mathfrak{p}_v)$ the Siegel parahoric of $GSp_4(F_v)$, defined by
\begin{equation}
Si(\mathfrak{p}_v)=\left\{ \left( \begin{array}{cc} a & b \\ c & d \end{array} \right) \in GSp_4(\mathcal{O}_v) \ | \ c \in \varpi M_2(\mathcal{O}_v)  \right\}.
\end{equation}
Let $St_{GL_2}$ be the Steinberg representation of $GL_2(F_v)$, that is, the unique irreducible subrepresentation of the induced representation $\pi(|\cdot|^{1/2},|\cdot|^{-1/2})$. 
\begin{lemma} \label{lemma:Local_Siegel_fixed_vectors}
Let $(\pi_{1,2})_v=\tau \boxtimes \tau^\vee$, with $\tau$ a representation of $B_v^\times$. Assume that the Jacquet-Langlands lift of $\tau$ is of the form $\tau^{JL}=\chi St_{GL_2}$, where $\chi$ is an unramified quadratic character of $F_v^\times$. Then the space of vectors in $\theta((\pi_{1,2})_v)$ fixed by $Si(\mathfrak{p}_v)$ is one-dimensional.
\end{lemma}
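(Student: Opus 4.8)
The plan is to realise $\Pi:=\theta((\pi_{1,2})_v)$ as an irreducible constituent of an explicit induced representation and then to count its $Si(\mathfrak{p}_v)$-fixed vectors by Hecke-algebra methods. Whether $B_v$ is split or ramified, the hypothesis $\tau^{JL}=\chi St_{GL_2}$ and parts (2) and (3) of \autoref{prop:local_theta_GO4} show that $\Pi$ is an irreducible constituent of
\[
I:=I_{Q(F_v)}(1,\chi St_{GL_2}),
\]
the normalized induction to $GSp_4(F_v)$ of the representation $1\boxtimes \chi St_{GL_2}$ of the Levi $GL_1(F_v)\times GL_2(F_v)$ of the Klingen parabolic $Q(F_v)$. (In the split case $\tau=\chi St_{GL_2}$ already; in the ramified case $\tau$ is forced to be the character $\chi\circ n$ of $B_v^\times$, whose Jacquet--Langlands lift is $\chi St_{GL_2}$.) Since $\chi$ is unramified, $\chi St_{GL_2}$ carries a nonzero vector fixed by the standard Iwahori of $GL_2(F_v)$, so $I$ --- and hence $\Pi$ --- is Iwahori-spherical, and one may argue entirely inside the Iwahori-spherical block of $GSp_4(F_v)$.

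\textbf{Identification of $\Pi$.} Up to an unramified twist, $I$ is one of the standard Klingen-induced representations of $GSp_4(F_v)$ whose decomposition into irreducibles is classically known (Sally--Tad\'ic; see also the tables of Roberts--Schmidt): depending on whether $\chi$ is trivial it is either irreducible or has a small, explicitly described set of constituents, each with an explicit Langlands datum. Using the precise form of the Gan--Takeda classification \cite[Thm.\ 8.1, 8.2]{GanTakedaThetaGSp4} --- and recording, via \eqref{eq:local_theta_sign} and \cite[Thm.\ 3.4]{RobertsGlobalLpackets}, that it is the extension $(\pi_{1,2}^+)_v$ that governs which constituent occurs --- one pins down $\Pi$ unambiguously among these.

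\textbf{Counting Siegel-parahoric-fixed vectors.} First compute $\dim_{\mathbb{C}} I^{Si(\mathfrak{p}_v)}$ by Mackey theory from the finitely many double cosets $Q(F_v)\backslash GSp_4(F_v)/Si(\mathfrak{p}_v)$ and the Iwahori-fixed line of $\chi St_{GL_2}$. To distribute this dimension among the constituents of $I$, use that $\Pi^I$ (Iwahori-fixed vectors) is naturally a module over the affine Hecke algebra of $GSp_4(F_v)$ and that passing from $I$- to $Si(\mathfrak{p}_v)$-invariants is given by the idempotent of the rank-one finite Hecke subalgebra $\mathcal{H}_{W_{Si}}\cong\mathcal{H}(\mathbb{Z}/2)$ attached to the simple reflection $s$ defining the Siegel parahoric; thus $\dim\Pi^{Si(\mathfrak{p}_v)}$ equals the dimension of the $T_s=q$-eigenspace on $\Pi^I$. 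Evaluating this for the constituent identified above yields $\dim\Pi^{Si(\mathfrak{p}_v)}=1$. Alternatively one may quote directly the tabulated dimensions of parahoric-fixed spaces for non-supercuspidal representations of $GSp_4$.

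The main obstacle is the identification step: reconciling the normalizations of Gan--Takeda, the Klingen-versus-Siegel parabolic conventions, and the choice of extension $(\pi_{1,2}^{\pm})_v$, so as to name $\Pi$ precisely among the constituents of $\chi St_{GL_2}\rtimes 1$. Once $\Pi$ is identified, the dimension count is a bounded, essentially combinatorial verification.
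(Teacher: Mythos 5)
Your proposal is, in essence, the same approach the paper takes, but you stop short of the two concrete facts that finish it. The paper's proof is short: it names $\theta((\pi_{1,2})_v)$ explicitly in Roberts--Schmidt notation as $\tau(S,\nu^{-1/2}\chi)$ when $B_v$ is split and $\tau(T,\nu^{-1/2}\chi)$ when $B_v$ is division, and then reads off $\dim\theta((\pi_{1,2})_v)^{Si(\mathfrak{p}_v)}=1$ from their Table A.15. You correctly realize $\Pi=\theta((\pi_{1,2})_v)$ as a constituent of $I_{Q(F_v)}(1,\chi St_{GL_2})=1\rtimes\chi St_{GSp(2)}$, and you propose either a Hecke-algebra eigenspace computation on $\Pi^I$ or a table lookup to finish. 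That is fine, but the ``main obstacle'' you flag --- pinning down which constituent $\Pi$ is --- is not actually an obstacle for this lemma. The Klingen-induced representation $1\rtimes\chi St_{GSp(2)}$ has exactly two irreducible constituents, namely the type VIa representation $\tau(S,\nu^{-1/2}\chi)$ and the type VIb representation $\tau(T,\nu^{-1/2}\chi)$, and Table A.15 of Roberts--Schmidt gives each of them a one-dimensional space of $Si(\mathfrak{p}_v)$-fixed vectors. So whichever constituent $\Pi$ is, the conclusion follows; the delicate reconciliation of normalizations you worry about is only needed if one wants to say precisely which of the two it is (which the paper does, but which the lemma does not require). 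The Hecke-algebra mechanism you describe ($T_s$-eigenspace) is a valid alternative to the table, but once you know the two constituents you get the answer immediately, so the extra machinery is unnecessary here.
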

\begin{proof}
With the notation of \cite[\textsection 2.2]{RobertsSchmidBook}, the representation $\theta((\pi_{1,2})_v)$ equals $\tau(S,\nu^{-1/2}\chi)$ (if $B_v$ is split) or $\tau(T,\nu^{-1/2}\chi)$ (if $B_v$ is division). Both representations have a one-dimensional space vectors fixed by $Si(\mathfrak{p}_v)$ (see \cite[Table A.15]{RobertsSchmidBook})).
\end{proof}

We now consider the description of theta lifts in terms of Langlands parameters. Let $W_{F_v}$ be the Weil group of $F_v$ and $WD_{F_v}=W_{F_v} \times SL_2(\mathbb{C})$ be the Weil-Deligne group of $F_v$. Given a smooth irreducible representation $\pi_v$ of $B^\times_v$, denote by $\phi_{\pi_v}:WD_{F_v} \rightarrow GL_2(\mathbb{C})$ the Langlands parameter of its Jacquet-Langlands lift to $GL_2(F_v)$. Note that the character $\det(\phi_{\pi_v}):W_{F_v} \rightarrow \mathbb{C}^\times$ corresponds to the central character $\omega_{\pi_v}$ of $\pi_v$ by local class field theory; in particular, $\det(\phi_{\pi_{1,v}})\det(\phi_{\pi_{2,v}})=1$ if $\omega_{\pi_{1,v}}\omega_{\pi_{2,v}}=1$. Given two representations $\pi_{1,v}$ and $\pi_{2,v}$ with $\omega_{\pi_{1,v}}\omega_{\pi_{2,v}}=1$, we obtain a Langlands parameter
\begin{equation}
\phi_{\pi_{1,v}} \oplus (\phi_{\pi_{2,v}} \otimes \det(\phi_{\pi_{1,v}})) : WD_{F_v} \rightarrow GSp_4(\mathbb{C})
\end{equation}
by composing the map $(\phi_{\pi_{1,v}},\phi_{\pi_{2,v}} \otimes \det(\phi_{\pi_{1,v}})):WD_{F_v} \rightarrow GL_2(\mathbb{C}) \times_{GL_1(\mathbb{C})} GL_2(\mathbb{C})$ with the embedding $\iota_0:GL_2(\mathbb{C}) \times_{GL_1(\mathbb{C})} GL_2(\mathbb{C}) \rightarrow GSp_4(\mathbb{C})$ given by \eqref{eq:def_iota_0}.

Given a smooth irreducible representation $\Pi$ of $GSp_4(F_v)$, denote by
\begin{equation}
\phi_\Pi:WD_{F_v} \rightarrow GSp_4(\mathbb{C})
\end{equation}
the Langlands parameter attached to it in \cite{GanTakedaLLCGSp4}.

\begin{lemma} \label{lemma:LL_param}
Let $F_v$ and $(\pi_{1,2})_v=\pi_{1,v} \boxtimes \pi_{2,v}$ be as in \autoref{prop:local_theta_GO4}. Let $\Pi=\theta((\pi_{1,2})_v)$ and let $\phi_\Pi:WD_{F_v} \rightarrow GSp_4(\mathbb{C})$ be its local Langlands parameter. Then
\begin{equation*}
\phi_\Pi=\phi_{\pi_{1,v}} \oplus (\phi_{\pi_{2,v}} \otimes \det(\phi_{\pi_{1,v}})).
\end{equation*}
\end{lemma}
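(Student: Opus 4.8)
\emph{Proof strategy.} The plan is to verify the equality of parameters directly, case by case, according to the explicit description of $\Pi=\theta((\pi_{1,2})_v)$ furnished by \autoref{prop:local_theta_GO4}, computing $\phi_\Pi$ in each case by means of the local Langlands correspondence for $GSp_4(F_v)$ of \cite{GanTakedaLLCGSp4} and comparing with $\phi_{\pi_{1,v}}\oplus(\phi_{\pi_{2,v}}\otimes\det(\phi_{\pi_{1,v}}))$. A preliminary simplification is useful. Since $\omega_{\pi_{1,v}}\omega_{\pi_{2,v}}=1$ and $\det(\phi_{\pi_{i,v}})=\omega_{\pi_{i,v}}$, we have $\phi_{\pi_{2,v}}\otimes\det(\phi_{\pi_{1,v}})=\phi_{\pi_{2,v}}\otimes\omega_{\pi_{2,v}}^{-1}=\phi_{\pi_{2,v}}^\vee$, so the claimed parameter equals $\phi_{\pi_{1,v}}\oplus\phi_{\pi_{2,v}}^\vee$; the two two-dimensional summands share the determinant $\omega_{\pi_{1,v}}$, so this is a four-dimensional parameter factoring through $\iota_0:GL_2(\mathbb{C})\times_{GL_1(\mathbb{C})}GL_2(\mathbb{C})\hookrightarrow GSp_4(\mathbb{C})$ of \eqref{eq:def_iota_0} with similitude character $\omega_{\pi_{1,v}}$ (which matches the central character of $\Pi$). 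In particular, in cases $(2)$ and $(3)$, where $\pi_{1,v}\boxtimes\pi_{2,v}=\tau\boxtimes\tau^\vee$, this parameter is just the ``doubling'' $\phi_{\tau^{JL}}\oplus\phi_{\tau^{JL}}$ of the two-dimensional discrete series parameter of the Jacquet--Langlands lift $\tau^{JL}$ (with $\tau^{JL}=\tau$ when $B_v$ is split).

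\emph{The unramified case $(1)$.} Here $\Pi=\pi_B(\chi_2'\chi_1',\chi_2\chi_1';\chi_1)$ and the target parameter are both unramified, so it suffices to compare Satake data. Feeding the triple $(\chi_2'\chi_1',\chi_2\chi_1';\chi_1)$ into the recipe of \cite{GanTakedaLLCGSp4} for the $L$-parameter of an unramified constituent of $I_B$, and using that $\phi_{\pi_{i,v}}=\chi_i\oplus\chi_i'$ as unramified two-dimensional parameters, one finds the resulting four-dimensional unramified parameter to be $(\chi_1\oplus\chi_1')\oplus(\chi_2\chi_1\chi_1'\oplus\chi_2'\chi_1\chi_1')=\phi_{\pi_{1,v}}\oplus(\phi_{\pi_{2,v}}\otimes\det(\phi_{\pi_{1,v}}))$. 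This is pure bookkeeping once the identification of the dual torus with $(\mathbb{C}^\times)^3$ induced by the parametrization $(\chi_1,\chi_2,\chi)$ of $T(F_v)$ recalled above is fixed, and the similitude characters match because $\omega_{\pi_{1,v}}=\chi_1\chi_1'$.

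\emph{The discrete series cases $(2)$ and $(3)$.} By \autoref{prop:local_theta_GO4}, $\Pi$ is an irreducible constituent of $I_{Q(F_v)}(1,\tau)$ (resp. $I_{Q(F_v)}(1,\tau^{JL})$), and since $\phi_\tau=\phi_{\tau^{JL}}$ by the very definition of $\phi_\tau$ as the parameter of the Jacquet--Langlands lift, it is enough to treat $I_{Q(F_v)}(1,\tau^{JL})$ with $\tau^{JL}$ a discrete series of $GL_2(F_v)$. By the determination of the theta lift in \cite{GanTakedaThetaGSp4} together with the tables of $L$-parameters for constituents of Klingen-induced representations in \cite{GanTakedaLLCGSp4} (see also \cite{RobertsSchmidBook}), the constituent of $I_{Q(F_v)}(1,\tau^{JL})$ that arises as $\theta((\pi_{1,2})_v)$ is precisely the one with Gan--Takeda parameter $\phi_{\tau^{JL}}\oplus\phi_{\tau^{JL}}$ embedded via $\iota_0$, which is the claimed equality. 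I expect the main obstacle to lie exactly here: one must be careful about the reducibility of $I_{Q(F_v)}(1,\tau^{JL})$ (which constituent is the theta lift, using the extension $(\pi_{1,2}^+)_v$ rather than $(\pi_{1,2}^-)_v$), about the similitude and central character normalizations when passing between the $GL_2\times_{GL_1}GL_2$ and $GSp_4$ pictures, and --- in the Steinberg-twist case $\tau^{JL}=\chi St_{GL_2}$ relevant to \autoref{lemma:Local_Siegel_fixed_vectors} --- about the precise $L$-packet structure; each of these is handled by a careful reading of \cite{GanTakedaThetaGSp4}, \cite{GanTakedaLLCGSp4} and \cite{RobertsSchmidBook}. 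Alternatively, one could avoid the case analysis by appealing to the general compatibility of the similitude theta correspondence for $(GSp_4,GSO(V))$ with $\dim V=4$ with the local Langlands correspondence, but the route through \autoref{prop:local_theta_GO4} keeps the argument self-contained.
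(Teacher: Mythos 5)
Your proposal takes a genuinely different route from the paper. The paper disposes of this lemma in one line: it is a direct citation of \cite[Prop.~13.1]{GanTakedaThetaGSp4}, which gives the compatibility of the similitude theta correspondence for $(GSp_4,GSO(V))$ (with $\dim V=4$) with the local Langlands correspondence, uniformly and in all cases; the only content of the paper's ``proof'' beyond the citation is the remark that the twist by $\det(\phi_{\pi_{1,v}})$ appears because Gan--Takeda parametrize representations of $\tilde{H}^0(F_v)$ differently (as $\pi_{1,v}\boxtimes\pi_{2,v}^\vee$ rather than $\pi_{1,v}\boxtimes\pi_{2,v}$). You instead opt for a case-by-case verification based on the explicit description of $\theta((\pi_{1,2})_v)$ given in \autoref{prop:local_theta_GO4} together with the Gan--Takeda parameter recipe for principal-series and Klingen-induced constituents. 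Your reduction $\phi_{\pi_{2,v}}\otimes\det(\phi_{\pi_{1,v}})=\phi_{\pi_{2,v}}^\vee$ (using $\omega_{\pi_{1,v}}\omega_{\pi_{2,v}}=1$) is correct, as is the bookkeeping in case $(1)$. What your argument buys is concreteness and independence from the general compatibility statement; what the paper's citation buys is a uniform, complete proof (your case analysis, as written, only covers the three special situations enumerated in \autoref{prop:local_theta_GO4}, whereas the lemma's hypotheses really only refer to the preamble of that Proposition --- i.e.\ an arbitrary tempered representation). Notably, you yourself flag the paper's actual strategy as the ``alternative'' in your last sentence, so you already know the shortcut. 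The one substantive gap to register: in cases $(2)$ and $(3)$ you state that the relevant constituent of $I_{Q(F_v)}(1,\tau^{JL})$ is the one with parameter $\phi_{\tau^{JL}}\oplus\phi_{\tau^{JL}}$ but do not actually carry out the identification of that constituent (which depends on the choice of extension $(\pi_{1,2}^+)_v$ and on comparing the $L$-packet entries in \cite{RobertsSchmidBook}); this is precisely where the work is, and waving at ``a careful reading'' leaves the proof a sketch rather than complete. If you want a self-contained argument along your lines, you should pin this down (e.g.\ for the case $\tau^{JL}=\chi St_{GL_2}$ that is actually used via \autoref{lemma:Local_Siegel_fixed_vectors}, match $\tau(S,\nu^{-1/2}\chi)$ or $\tau(T,\nu^{-1/2}\chi)$ against the parameter tables).
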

\begin{proof}
This is \cite[Prop. 13.1]{GanTakedaThetaGSp4}. (The twist by $\det(\phi_{\pi_{1,v}})$ in our definition of the Langlands parameter arises from the different parametrization of representations of $\tilde{H}^0(F_v)$ used there.)
\end{proof}

Denote by $std:GSp_4(\mathbb{C}) \rightarrow GL_5(\mathbb{C})$ the five-dimensional representation of $GSp_4(\mathbb{C})$ given by the homomorphism
\begin{equation}
GSp_4(\mathbb{C}) \rightarrow PGSp_4(\mathbb{C}) \cong SO_5(\mathbb{C}) \hookrightarrow GL_5(\mathbb{C})
\end{equation}
described in \cite[\textsection A.7]{RobertsSchmidBook}. Given a smooth irreducible representation $\Pi$ of $GSp_4(F_v)$ with parameter $\phi_\Pi$, one can define a local L-factor $L(std \circ \phi_\Pi,s)$; see e.g. \cite[\textsection 4.1]{TateCorvallis}. If $\Pi'$ is an irreducible constituent of $\Pi|_{Sp_4(F_v)}$ having non-zero vectors fixed by an Iwahori subgroup of $Sp_4(F_v)$ (in particular, if $\Pi'$ is unramified), then we have
\begin{equation}
L(\Pi',\mathbbm{1},s)=L(std \circ \phi_\Pi,s),
\end{equation}
where the $L$-factor on the left hand side is defined as in \autoref{subsection:local_doubling_integrals} (see \cite[\textsection 10]{LapidRallis}).

Suppose that $\Pi=\theta((\pi_{1,2})_v)$ is one of the representations in \autoref{prop:local_theta_GO4}. Then \autoref{lemma:LL_param} implies that
\begin{equation}
L(std \circ \phi_\Pi,s)=\zeta_v(s)L(\phi_{\pi_{1,v}} \otimes \phi_{\pi_{2,v}},s).
\end{equation}
Here $\zeta_v(s)$ is the local zeta function of $F_v$, and we have
\begin{equation}
L(\phi_{\pi_{1,v}} \otimes \phi_{\pi_{2,v}},s)=L(\pi_{1,v} \times \pi_{2,v},s),
\end{equation} 
where $L(\pi_{1,v} \times \pi_{2,v},s)$ is the local $L$-function defined in \cite{JacquetPSShalika}. In particular, if $(\pi_{1,2})_v$ is either unramified and pre-unitary or one of the representations in \autoref{lemma:Local_Siegel_fixed_vectors} and $\Pi'$ is a constituent of $\theta(\pi_{1,2})_v)|_{Sp_4(F_v)}$ having non-zero Iwahori-fixed vectors, then
\begin{equation}
L(\Pi',\mathbbm{1},s)=\zeta_v(s)L(\pi_{1,v} \times \pi_{2,v},s).
\end{equation}

\subsubsection{Global correspondence} The restricted tensor product over all places $v$ of $F$ of the representations $\omega_v$ of $R(F_v)$ in \eqref{eq:def_extension_theta_R} gives a representation of $R(\mathbb{A}_F)$ on $\mathcal{S}(V(\mathbb{A}_F)^2)$. For $(g,h) \in R(\mathbb{A}_F)$ and $\varphi \in \mathcal{S}(V(\mathbb{A}_F)^2)$, the theta function
\begin{equation}
\theta(g,h;\varphi)=\sum_{(v,w) \in V(F)^2} \omega(g,h)\varphi(v,w)
\end{equation}
is left invariant under $R(F)$ (see \cite[Lemma 5.1.7] {HarrisKudlaGSp2}). Let $f$ be a cusp form on $\tilde{H}(\mathbb{A}_F)$. Given $g \in GSp_4(\mathbb{A}_F)$, choose $h \in \tilde{H}(\mathbb{A}_F)$ with $\nu(g)=\nu(h)$ and define
\begin{equation}
\theta(g;f,\varphi)=\int_{O(V)(F) \backslash O(V)(\mathbb{A}_F)} f(h'h)\theta(g,h'h;\varphi)dh'.
\end{equation}
Clearly the integral does not depend on the choice of $h$. It defines an automorphic form on $GSp_4(\mathbb{A}_F)$ and, if $f$ has a central character $\chi$, then $\theta(f,\varphi)$ also has central character $\chi$ (\cite[Lemma 5.1.9]{HarrisKudlaGSp2}). Given a cuspidal automorphic representation $\pi \subset \mathcal{A}_0(\tilde{H})$ of $\tilde{H}(\mathbb{A})$, the space
\begin{equation}
\Theta(\pi):=\{ \theta(f,\varphi) \ | \ f \in \pi, \varphi \in \mathcal{S}(V(\mathbb{A}_F)^2) \} \subset \mathcal{A}(GSp_{4,F})
\end{equation}
is stable under the action of $GSp_4(\mathbb{A}_F)$ by right translation and is known as the global theta lift of of $\pi$.

Let $\pi_{1,2} \subset \mathcal{A}_0(\tilde{H}^0)$ be a tempered cuspidal automorphic representation of $\tilde{H}^0(\mathbb{A})$. Thus $\pi_{1,2}=\pi_1 \boxtimes \pi_2$, where $\pi_1$ and $\pi_2$ are tempered cuspidal automorphic representations of $B(\mathbb{A})^\times$ whose central characters $\omega_{\pi_1}$ and $\omega_{\pi_2}$ satisfy $\omega_{\pi_1}\omega_{\pi_2}=1$. Write $\pi_{1,2}=\otimes'_v (\pi_{1,2})_v$ and consider the admissible irreducible representation
\begin{equation}
\pi_{1,2}^+:=\otimes'_v (\pi_{1,2}^+)_v
\end{equation}
of $\tilde{H}(\mathbb{A}_F)$. By \cite[Thm. 7.1]{RobertsGlobalLpackets}, $\pi_{1,2}^+$ is a cuspidal automorphic representation that appears with multiplicity one in $\mathcal{A}_0(\tilde{H})$; we denote by $V_{\pi_{1,2}^+} \subset \mathcal{A}_0(\tilde{H})$ the corresponding space of automorphic forms and by  $V^0_{\pi_{1,2}^+} \subset \mathcal{A}_0(\tilde{H}^0)$ the space of automorphic forms obtained by restricting forms in $V_{\pi_{1,2}^+}$ to $\tilde{H}^0(\mathbb{A}_F)$. Define
\begin{equation}
\Theta(\pi_1 \boxtimes \pi_2)=\Theta(\pi_{1,2}^+).
\end{equation}
\begin{theorem} \cite[Thm 8.3, Thm. 8.5]{RobertsGlobalLpackets} \label{thm:Roberts}
If $\pi_1 \ncong \pi_2^\vee$, then $\Theta(\pi_1 \boxtimes \pi_2) \neq 0$ is a cuspidal automorphic representation of $GSp_4(\mathbb{A}_F)$. We have
\begin{equation*}
\Theta(\pi_1 \boxtimes \pi_2)=\otimes'_v \theta((\pi_{1,2})_v).
\end{equation*}
Moreover, the representation $\Theta(\pi_1 \boxtimes \pi_2)$ appears with multiplicity one in the space of cusp forms with central character $\omega_{\pi_1}$.
\end{theorem}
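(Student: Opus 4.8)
The plan is to follow the strategy of Rallis for classical groups as adapted to similitude dual pairs by Roberts. Set $\tilde{H}=GO(V)$ with $\dim V=4$ (so that $\tilde{H}^0=GSO(V)\cong \mathbb{G}_m\backslash B^\times\times B^\times$) and consider the Witt tower $GSp_2\subset GSp_4\subset GSp_6\subset\cdots$ into which $\pi_{1,2}^+$ is being lifted. By Roberts' extension of the Howe duality theorem to similitude dual pairs — valid at every place, using temperedness of $\pi_{1,2}$ at the dyadic places — the local theta lift $\theta((\pi_{1,2})_v)$ to $GSp_4(F_v)$ is a well-defined irreducible admissible representation; moreover it is nonzero at every $v$ and unramified for almost all $v$. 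Hence $\Pi:=\otimes'_v\theta((\pi_{1,2})_v)$ is a well-defined irreducible admissible representation of $GSp_4(\mathbb{A}_F)$, and the goal is to show that $\Theta(\pi_1\boxtimes\pi_2)=\Theta(\pi_{1,2}^+)$ is a nonzero space of cusp forms, that it is $\Pi$-isotypic, and that $\Pi$ occurs in it with multiplicity one.

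First I would locate the first occurrence index $n_0$ of $\pi_{1,2}^+$ in the above tower. Since $\pi_1\ncong\pi_2^\vee$, Shimizu's theorem \cite{Shimizu} (the first half of the dichotomy recalled in the introduction) shows that $\pi_{1,2}$ is not a theta lift from $GL_2=GSp_2$; by the compatibility of the global theta correspondence with local Howe duality this forces the lift of $\pi_{1,2}^+$ to $GSp_2$ to vanish, whence $n_0\geq 2$. For the reverse bound $n_0\leq 2$ I would use the Rallis inner product formula for the dual pair $(Sp_8,O(V))$: unfolding the Petersson pairing $\langle\theta(f,\varphi),\theta(f,\varphi)\rangle$ by the doubling method and applying the Siegel--Weil formula of \autoref{thm:SiegelWeil} expresses it, up to an explicit nonzero constant and a product of local doubling zeta integrals, in terms of a value of the standard $L$-function of $GSO(V)$, which by \autoref{lemma:LL_param} is $\zeta_F(s)L(s,\pi_1\times\pi_2)$ at the relevant point. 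Because $\pi_1\ncong\pi_2^\vee$, this Rankin--Selberg $L$-function is holomorphic and nonzero there, and one can choose the Schwartz data so that every local zeta integral is nonzero; hence the pairing is nonzero for suitable $f,\varphi$ and $n_0\leq 2$. Thus $n_0=2$, and by Rallis' tower property the lift at the first occurrence is cuspidal, so $\Theta(\pi_1\boxtimes\pi_2)$ is a nonzero space of cusp forms on $GSp_4(\mathbb{A}_F)$.

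Next I would identify this space with $\Pi$ and prove multiplicity one. Decomposing the nonzero cuspidal space $\Theta(\pi_1\boxtimes\pi_2)$ into irreducibles, each constituent $\tau=\otimes'_v\tau_v$ has the property that $\tau_v$ is a nonzero quotient of $\omega_v$ when paired against the $(\pi_{1,2}^+)_v$-isotypic part of $\mathcal{S}(V(F_v)^2)$; since $(\pi_{1,2}^+)_v\in\mathcal{R}_2(\tilde{H}(F_v))$ by \cite[Theorem 3.4]{RobertsGlobalLpackets} and the local theta lift is irreducible, this forces $\tau_v\cong\theta((\pi_{1,2})_v)$ for all $v$, so $\tau\cong\Pi$ and $\Theta(\pi_1\boxtimes\pi_2)$ is $\Pi$-isotypic. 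For the multiplicity I would combine multiplicity one of $\pi_{1,2}^+$ in $\mathcal{A}_0(\tilde{H})$ (\cite[Thm. 7.1]{RobertsGlobalLpackets}) with the nondegeneracy of the theta pairing coming from the Rallis inner product formula above: together these show that the theta map realizes $\Pi$ exactly once inside the space of cusp forms on $GSp_4(\mathbb{A}_F)$ with central character $\omega_{\pi_1}$. (Alternatively one may invoke Arthur's multiplicity formula for $GSp_4$, since $\Pi$ belongs to a global $A$-packet of general type, but Roberts' original argument is self-contained.)

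The main obstacle is the global nonvanishing, i.e. the bound $n_0\leq 2$: one must show not only that the relevant $L$-value is nonzero but also that the Schwartz data can be chosen so that every local doubling zeta integral appearing in the Rallis inner product formula is nonzero at the point in question — in residue characteristic $2$ this is exactly where the temperedness of $\pi_{1,2}$ and Roberts' extension of Howe duality do essential work. A secondary difficulty is the multiplicity one assertion, which (predating Arthur's classification of the discrete spectrum of $GSp_4$) is established by a careful comparison of the global theta correspondence with the purely local Howe duality together with strong multiplicity one for $GSp_4$.
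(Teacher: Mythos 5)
This theorem is a direct citation to \cite[Thm 8.3, Thm. 8.5]{RobertsGlobalLpackets}; the paper states it without proof, so there is no internal argument to compare against. Your sketch follows the general ``tower plus Rallis inner product'' template, which is a sensible guess, but as written it contains a concrete structural error and two substantive gaps.

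The structural error is the choice of dual pair for the inner product computation. You want $\langle\theta(f,\varphi),\theta(f,\varphi)\rangle_{GSp_4}$ for a lift \emph{from} $GO(V)$ \emph{to} $GSp_4$, so doubling must happen on the orthogonal side: after writing $\theta(g,h_1;\varphi)\overline{\theta(g,h_2;\varphi)}$ as a theta kernel for $(Sp_4,O(V\oplus -V))$ and integrating over $[Sp_4]$, the Siegel--Weil formula for $(Sp_4,O(V\oplus -V))$ produces a Siegel Eisenstein series on $O(V\oplus -V)$, and the remaining doubling integral over $[O(V)]\times[O(V)]$ yields the standard $L$-function of $\pi_{1,2}$ on $O(V)$ (which is $L(\pi_1\times\pi_2,s)$). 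The pair $(Sp_8,O(V))$ you invoke is the one the paper uses in \autoref{thm:Rallis_pairing}, where both objects being paired are theta lifts \emph{from} $Sp_4$; that is the opposite direction and a different computation. Relatedly, \autoref{lemma:LL_param} computes the parameter of $\theta((\pi_{1,2})_v)$ on $GSp_4$, not the standard $L$-factor of $(\pi_{1,2})_v$ on $GO(V)$, so it is not the right reference for the $L$-function you need, even though the final $L$-function is the same up to a zeta factor. Note also that $V\oplus -V$ is split, so even when $V$ is anisotropic the Siegel--Weil input here is not \autoref{thm:SiegelWeil} but the Kudla--Rallis regularized version, an issue you do not mention. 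Beyond this, you candidly flag that the nonvanishing of the local (especially archimedean) doubling zeta integrals is unaddressed --- this is indeed the crux, and the paper explicitly sidesteps it in Section \ref{subsection:local_data} by using Watson's theta identities and a seesaw rather than the Rallis inner product. Finally, the multiplicity one assertion concerns the full cuspidal spectrum with the given central character, not just the image of theta: you need the converse argument that any cuspidal occurrence of $\Pi$ lifts back nontrivially to $GO(V)$, lands in $\pi_{1,2}^+$ by local Howe duality plus strong multiplicity one, and then inherits multiplicity one from \cite[Thm.~7.1]{RobertsGlobalLpackets}; your sketch gestures at this but does not carry it out.
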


For the rest of this paper we set $(V,Q)=(B,n)$. Fix once and for all an isomorphism $\iota : B \otimes_{\mathbb{Q}} \mathbb{A}^S \cong M_2(\mathbb{A}^S)$. For $p$ a prime dividing $d(B)$, denote by $\mathcal{O}_{B,p}$ the maximal order of $B \otimes_{\mathbb{Q}} \mathbb{Q}_p$. Define
\begin{equation}
\hat{\mathcal{O}}_B= \iota^{-1}(\prod_{p \notin S} M_2(\mathbb{Z}_p)) \times \prod_{p \in S} \mathcal{O}_{B,p}, \qquad K_B=\hat{\mathcal{O}}_B^\times.
\end{equation}
Then $\hat{\mathcal{O}}_B$ is a maximal order of $B \otimes_{\mathbb{Q}} \mathbb{A}_f$ and $K_B$ is a maximal compact subgroup of $B(\mathbb{A}_f)^\times$.

From now on, fix tempered cuspidal automorphic representations $\pi_1 \cong \otimes'_v \pi_{1,v}$ and $\pi_2 \cong \otimes'_v \pi_{2,v}$ of $B(\mathbb{A})^\times$ with trivial central character such that $\pi_1 \ncong \pi_2^\vee$ and automorphic forms $f_i \in \pi_i$ for $i=1,2$. Assume that $f_1=\otimes_v f_{1,v}$ and $f_2=\otimes_v f_{2,v}$ are pure tensors and that:
\begin{itemize}
\item $\pi_{1,\infty}=\pi_{2,\infty}=\mathcal{D}_2$ is the discrete series of $GL_2(\mathbb{R})$ of weight $2$ with trivial central character, $f_{1,\infty} \in \pi_{1,\infty}$ has weight $2$ and $f_{2,\infty} \in \pi_{2,\infty}$ has weight $-2$;

\item $\pi_{1,v}=\pi_{2,v}=\mathbbm{1}$ is the trivial representation of $(B_v)^\times$ if $B$ is ramified at $v$;

\item $(\pi_i)_f^{K_B}= \mathbb{C} \cdot \otimes_{v \nmid \infty} f_{i,v}$ for $i=1,2$.
\end{itemize}

Define
\begin{equation}
\Pi=\Theta(\pi_1 \boxtimes \pi_2) \subset \mathcal{A}_0(GSp_{4,F})
\end{equation}
to be the global theta lift to $GSp_4(\mathbb{A})$; note that $\Pi$ has trivial central character. Let $K^{(2)}=\prod_{p}K^{(2)}_p$ be the open compact subgroup of $GSp_4(\mathbb{A}_f)$ defined by
\begin{equation}
K^{(2)}_p= \left\{ \begin{array}{ccc} GSp_4(\mathbb{Z}_p), & \quad \text{if} \quad p \nmid disc(B),  \\
Si(p), & \quad \text{if} \quad p \ | \ disc(B).
\end{array}  \right.
\end{equation}
It follows from \autoref{lemma:Local_Siegel_fixed_vectors} that the space of vectors in $\Pi_f$ fixed by $K^{(2)}$ is one dimensional; for every finite place $v$, we fix a vector $f_v \in \Pi_v$ such that
\begin{equation}
\Pi_f^{K^{(2)}}=\mathbb{C} \cdot \otimes_{v \nmid \infty} f_v.
\end{equation}

At the archimedean place, we have $\Pi_\infty \cong \pi_{\infty,1}$, where $\pi_{\infty,1}$ is the representation of $PGSp_4(\mathbb{R})$ defined in \autoref{subsection:reps_and_cohomology}. Lemma \ref{lemma:archimedean_K_type_one} shows that the $K_\infty$ type $\sigma_{2,0}$ occurs with multiplicity one in $\Pi_\infty$, and we choose a highest weight vector $f_\infty \in \Pi_\infty$. We fix an automorphic form $f \in \Pi$ such that under some isomorphism $\Pi \cong \otimes_v' \Pi_v$ we have
\begin{equation} \label{eq:def_f}
f = \otimes_v f_v
\end{equation}
with $f_v \in \Pi_v$ as above (note that such an automorphic form $f$ is unique up to scalar multiplication by the multiplicity one part of \autoref{thm:Roberts}).

\subsection{Local data} \label{subsection:local_data} The results of Roberts quoted above imply that $\Theta(\Pi) \neq 0$. However, to apply \autoref{thm:Rallis_pairing} we need to show that $\theta(f,\varphi) \neq 0$ for our chosen $f$ and a specific Schwartz function $\varphi \in \mathcal{S}(V(\mathbb{A})^2)$. To achieve this, one can try using the Rallis inner product formula for the Petersson norm of $\theta(f,\varphi)$, which reduces the non-vanishing of $\theta(f,\varphi)$ to the non-vanishing of local doubling integrals such as those studied in \autoref{subsection:Local_zeta_integrals}. Since the explicit evaluation of such integrals (especially in the archimedean case) can be quite complicated, we have chosen a different approach, which proceeds by using results of \cite{WatsonThesis} and an argument using seesaw duality. As a byproduct, once we have shown that $\theta(f,\varphi) \neq 0$ for some $\varphi$, we can use the Rallis inner product formula to give a global proof that certain local zeta integrals do not vanish.

We start by reviewing some results of \cite{WatsonThesis} on the theta correspondence for the dual pair $(GL_2,GO(V))$. For $\varphi \in \mathcal{S}(V(\mathbb{A}))$, consider the theta lift
\begin{equation}
\theta(\cdot,\varphi): \mathcal{A}_0(GL_2) \rightarrow \mathcal{A}(\tilde{H})
\end{equation}
defined by
\begin{equation}
\theta(f,\varphi)(h)=\int_{SL_2(\mathbb{Q})\backslash SL_2(\mathbb{A})} \overline{f(gg')} \theta(gg',h;\varphi)dg,
\end{equation}
where $h \in \tilde{H}(\mathbb{A})$ and $g \in GL_2(\mathbb{A})$ is such that $\det(g)=\nu(h)$.

Let $\pi=\pi_f \otimes \pi_\infty$ be an automorphic representation of $B(\mathbb{A})^\times$ with trivial central character. Set
\begin{equation}
k_\pi=\left\{ \begin{array}{cc} k,& \quad \text{if } \pi_\infty \text{ is a discrete series of weight } k \geq 2; \\
0,& \quad \text{otherwise.}
 \end{array} \right.
\end{equation}
Assume that 
\begin{equation}
\dim_\mathbb{C} (\pi_f)^{K_B}=1.
\end{equation}
Denote by $\pi^{JL}$ the automorphic representation of $GL_2(\mathbb{A})$ attached to $\pi$ by the Jacquet-Langlands correspondence. Given such a $\pi$, there exist automorphic forms $f_0 \in \pi$ and $f_0^{JL} \in \pi^{JL}$, unique up to multiplication by scalars, such that
\begin{itemize}
\item $f_0$ (resp. $f_0^{JL}$) is right invariant under $K_B$ (resp. $K_0(disc(B))$);
\item Both $f_0$ and $f_0^{JL}$ have weight $k_\pi$ under the right action by $SO(2)\subset GL_2(\mathbb{R})$.
\end{itemize}

Let $v$ be a place of $\mathbb{Q}$.
\begin{itemize}
\item Suppose that $v=p$ is finite. Set $\mathcal{O}_{B,v}=\hat{\mathcal{O}}_{B} \otimes_\mathbb{Z} \mathbb{Z}_p$ and let $\varphi^0_v=\mathbbm{1}_{\mathcal{O}_{B,v}} \in \mathcal{S}(B_v)$ be the characteristic function of $\mathcal{O}_{B,v} \subset B_v$.

\item Suppose that $v=\infty$. Let $\varphi^{(0)}_v(x)=\varphi^0(x,z_0)$, where $\varphi^0(x,z)$ is the Siegel gaussian. For $k$ a positive integer, let $\varphi_v^{(k)}(x)$ be the Schwartz function corresponding to
\begin{equation}
\varphi^{(k)}_\infty \left( \left( \begin{array}{cc} a & b \\ c & d \end{array} \right) \right)=(a-ib+ic+d)^k e^{-\pi (a^2+b^2+c^2+d^2)} \in \mathcal{S}(M_2(\mathbb{R}))
\end{equation}
under the identification $V_\mathbb{R}=B_\mathbb{R} \cong M_2(\mathbb{R})$ chosen above.
\end{itemize}

For $k$ a non-negative integer, define a global Schwartz function $\varphi^{(k)} \in \mathcal{S}(V(\mathbb{A}))$ by
\begin{equation} \label{eq:def_varphi_(k)}
\varphi^{(k)} = \otimes_{v \neq \infty} \varphi_v^0 \otimes \varphi_\infty^{(k)}.
\end{equation}
Recall that there is a surjection $\iota:B(\mathbb{A})^\times \times B(\mathbb{A})^\times \rightarrow \tilde{H}^0(\mathbb{A})$ . Given an automorphic form $f \in \mathcal{A}(\tilde{H}^0)$, we write $f(h_1,h_2)$ for the value of $f \circ \iota$ on $(h_1,h_2) \in (B(\mathbb{A})^\times)^2$.

\begin{proposition} \cite[Thm. 1]{WatsonThesis} \label{prop:Watson}
Let $\pi$ be an automorphic representation of $B(\mathbb{A})^\times$ such that $\dim_\mathbb{C} (\pi_f)^{K_B}=1$. Then
\begin{equation}
\theta(f_0^{JL},\varphi^{(k_\pi)})(h_1,h_2)=\alpha \cdot f_0(h_1)\overline{f_0(h_2)}
\end{equation}
for some $\alpha \in \mathbb{C}^\times$.
\end{proposition}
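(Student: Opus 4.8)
The plan is to deduce the statement from Shimizu's description of the theta correspondence for the dual pair $(GL_2,GO(B))$ together with a place-by-place analysis of the local theta lifts of the chosen test data. \textbf{Global shape of the lift.} First I would invoke Shimizu's theorem \cite{Shimizu}: the global theta lift $\Theta(\pi^{JL})$ to $\tilde{H}(\mathbb{A})=GO(B)(\mathbb{A})$ is non-zero and cuspidal, and its restriction to $\tilde{H}^0(\mathbb{A})=GSO(B)(\mathbb{A})\cong\mathbb{G}_m\backslash(B^\times\times B^\times)(\mathbb{A})$ is the representation $\pi\boxtimes\pi^\vee$. Since $\pi$ has trivial central character, $\pi^\vee\cong\overline{\pi}$, so $\Theta(\pi^{JL})|_{\tilde{H}^0}$ is spanned by the forms $(h_1,h_2)\mapsto\phi_1(h_1)\overline{\phi_2(h_2)}$ with $\phi_1,\phi_2\in\pi$. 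In particular $\theta(f_0^{JL},\varphi^{(k_\pi)})$, viewed as a function of $(h_1,h_2)$, is a finite $\mathbb{C}$-linear combination of such products; the content of the proposition is that for our specific $f_0^{JL}$ and $\varphi^{(k_\pi)}$ this combination is a single non-zero multiple of $f_0(h_1)\overline{f_0(h_2)}$.

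\textbf{Factorization and reduction to local statements.} Using $\dim_\mathbb{C}(\pi_f)^{K_B}=1$ together with the prescribed $SO(2)$-weight $k_\pi$ at the archimedean place, $f_0^{JL}$ is (up to scalar) a pure tensor $\otimes'_v f_{0,v}^{JL}$ for a fixed factorization $\pi^{JL}\cong\otimes'_v\pi_v^{JL}$, and $\varphi^{(k_\pi)}=\otimes_v\varphi_v$ is a pure tensor by construction. Because the global theta integral is assembled from the restricted tensor product of the local Weil representations and the Petersson pairing on $\pi^{JL}$ factors as a product of local pairings, the resulting vector in $\pi\boxtimes\pi^\vee$ factors as $\otimes_v\Theta_v(f_{0,v}^{JL}\otimes\varphi_v)$, where $\Theta_v$ denotes the local theta lift map attached to the local $(GL_2,GO(B))$ correspondence with values in $\pi_v\boxtimes\pi_v^\vee$. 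Hence it suffices to show that for every place $v$ one has $\Theta_v(f_{0,v}^{JL}\otimes\varphi_v)=\alpha_v\cdot(f_{0,v}\otimes\overline{f_{0,v}})$ with $\alpha_v\in\mathbb{C}^\times$, and that $\alpha_v=1$ for all but finitely many $v$; then $\alpha=\prod_v\alpha_v$.

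\textbf{The local computations.} At a finite place $v\nmid d(B)$ with $\pi_v$ unramified, $\varphi_v=\mathbbm{1}_{M_2(\mathbb{Z}_v)}$ is the standard lattice function and $f_{0,v}^{JL}$ is the $GL_2(\mathbb{Z}_v)$-fixed vector; the classical unramified computation shows the local lift carries the spherical vector to the spherical vector of $\pi_v\boxtimes\pi_v^\vee$, which is $f_{0,v}\otimes\overline{f_{0,v}}$ up to a non-zero scalar, and with the normalizations in place $\alpha_v=1$. At a finite place $v\mid d(B)$, $B_v$ is a division algebra, $\varphi_v=\mathbbm{1}_{\mathcal{O}_{B,v}}$, and one checks directly that the local lift of the one-dimensional $\mathcal{O}_{B,v}^\times$-fixed space of $\pi_v^{JL}$ is non-zero; since the corresponding space of vectors in $\pi_v\boxtimes\pi_v^\vee$ pinned down by the $K_B$-level and trivial central character is one-dimensional, the image is forced to be a non-zero multiple of $f_{0,v}\otimes\overline{f_{0,v}}$. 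At $v=\infty$, $\pi_\infty^{JL}=\mathcal{D}_{k_\pi}$ when $k_\pi\ge2$ (and the appropriate representation otherwise), $f_{0,\infty}^{JL}$ has $SO(2)$-weight $k_\pi$, and $\varphi_\infty^{(k_\pi)}$ has been built so that the archimedean theta integral transforms with the matching weight; the archimedean computation (Fock model, or the explicit weight bookkeeping as in \cite{HarrisKudlaGSp2}) identifies $\Theta_\infty(f_{0,\infty}^{JL}\otimes\varphi_\infty^{(k_\pi)})$ with the weight-$(k_\pi,-k_\pi)$ vector in $\mathcal{D}_{k_\pi}\boxtimes\mathcal{D}_{-k_\pi}$ (resp.\ the spherical vector when $k_\pi=0$), which is again $f_{0,\infty}\otimes\overline{f_{0,\infty}}$ up to a non-zero scalar since that weight space is one-dimensional. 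Assembling the three cases and multiplying gives the global statement.

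\textbf{Main obstacle.} The delicate point is the non-vanishing of the local lifts at the ramified finite places and at $\infty$: away from these the unramified computation is routine, but at the bad places one must confirm that the specific $\varphi_v$ and $f_{0,v}^{JL}$ are genuine test vectors, not merely that the local lift is non-zero as a map. I expect the cleanest route is the hybrid one above — combine Shimizu's global non-vanishing with the one-dimensionality of the relevant local isotypic spaces (which is where the hypothesis $\dim_\mathbb{C}(\pi_f)^{K_B}=1$ and the rigidity of $\mathcal{D}_{k_\pi}$ at $\infty$ enter) to reduce "non-zero map" to "non-zero value" — but verifying that the chosen $\varphi_\infty^{(k_\pi)}$ does not lie in the kernel of $\Theta_\infty$ still requires an explicit archimedean weight computation, and this is the step I expect to occupy most of the work.
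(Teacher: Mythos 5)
The paper does not prove this proposition; it cites it directly from Watson's thesis (\cite[Thm.~1]{WatsonThesis}) and offers no argument of its own. There is therefore no in-text proof to compare against.

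That said, your outline is a reasonable reconstruction of what such a proof must involve, and it matches the standard template: (i) Shimizu's identification of $\Theta(\pi^{JL})|_{\tilde{H}^0}$ with $\pi\boxtimes\pi^\vee$; (ii) a multiplicity-one reduction using $\dim_\mathbb{C}(\pi_f)^{K_B}=1$ at the finite places together with the one-dimensionality of the prescribed $SO(2)$-weight space at $\infty$, which pins $\theta(f_0^{JL},\varphi^{(k_\pi)})$ down to a single line in $\pi\boxtimes\pi^\vee$; (iii) verification that the resulting constant $\alpha$ is nonzero, by local computation. You also correctly identify where the real work lies: once (i)--(ii) are in place, the assertion reduces to a non-vanishing statement that must be checked place by place, with the archimedean computation the delicate one.

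Two cautions. First, the phrase ``the resulting vector factors as $\otimes_v \Theta_v(f_{0,v}^{JL}\otimes\varphi_v)$'' treats each local map $\Theta_v$ as canonically defined; by local Howe duality each such intertwiner is only unique up to scalar, so what is actually well-defined is the product (and hence the global $\alpha$), not the individual $\alpha_v$. Saying ``$\alpha_v=1$ for almost all $v$'' presupposes a normalization of the unramified local intertwiners, which must be fixed; this is harmless but should be said. Second, for the assertion $\alpha\neq 0$, the multiplicity-one reduction you describe is the efficient route, but it replaces an explicit computation with an abstract ``if nonzero then proportional to $f_0\otimes\overline{f_0}$'' argument; the non-vanishing itself (especially the claim that $\varphi_\infty^{(k_\pi)}$ is not in the kernel of the archimedean local theta lift) still requires the explicit archimedean integral, and this is exactly the content of Watson's theorem that your sketch defers. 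As an outline, your argument is sound and consistent with the cited result; as a standalone proof it is incomplete, as you acknowledge.
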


Using this result, we can now choose a Schwartz form $\varphi \in \mathcal{S}(V(\mathbb{A})^2)$ such that $\theta(f,\varphi) \neq 0$. Namely, set $\varphi=\otimes_v \varphi_v$, with
\begin{equation} \label{eq:def_varphi_example}
\varphi_v(x,y)= \left\{ \begin{array}{cc} \varphi^0_v(x)\varphi^0_v(y),& \quad \text{if} \quad v \neq \infty; \\
\varphi_\infty^{(0)}(x) \varphi_\infty^{(2)}(y),& \quad \text{if} \quad v = \infty. \end{array} \right.
\end{equation}

Using the Iwahori decomposition of $Si(p)$ one shows that the Schwartz function $\otimes_{v \nmid \infty} \varphi_v$ is invariant under the group $K^{(2)}$.

\begin{proposition} \label{proposition:non_vanishing}
With the notations above, $\theta(f,\varphi) \neq 0$.
\end{proposition}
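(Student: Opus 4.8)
The plan is to bypass the direct Rallis inner product computation---whose archimedean doubling integrals are hard to evaluate---and instead deduce the non-vanishing from a seesaw identity together with Watson's explicit formula \autoref{prop:Watson}. By \autoref{thm:Roberts}, $\Theta(\Pi)=\Theta(\pi_1\boxtimes\pi_2)=\pi_{1,2}^+$ is a nonzero irreducible cuspidal automorphic representation of $GO(V)(\mathbb{A})$, and $\Pi=\Theta(\pi_{1,2}^+)$; moreover the theta lift $\theta(f,\varphi)$ lies in the space of $\pi_{1,2}^+$. Since $\pi_{1,2}^+$ is irreducible, $\theta(f,\varphi)$ is nonzero as soon as the global theta integral defining it does not vanish identically for this particular pair $(f,\varphi)$, and by the usual adjointness of the theta correspondence it therefore suffices to produce one automorphic form $\phi$ in the space of $\pi_{1,2}^+$ with
\[
\langle\, f,\ \overline{\theta(\phi,\varphi)}\,\rangle \;\neq\; 0,
\]
$\theta(\phi,\varphi)$ now denoting the theta lift back to $\Pi\subset\mathcal{A}_0(GSp_4)$ (up to harmless complex conjugations of $\varphi$ that only change archimedean weights by a sign).

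To evaluate this pairing I would use the seesaw attached to the factorization $\varphi=\varphi^{(0)}\otimes\varphi^{(2)}$ inside $\mathcal{S}(V(\mathbb{A}))\otimes\mathcal{S}(V(\mathbb{A}))=\mathcal{S}(V(\mathbb{A})^2)$: this corresponds to a splitting $W=W_1\oplus W_2$ of the symplectic space underlying $GSp_4$, hence to an embedding $GSp_2\times GSp_2\hookrightarrow GSp_4$ with equal similitude factors, and to the seesaw whose top pair is $(GSp_4,GO(V))$ and whose bottom pair is $(GSp_2\times GSp_2,\,GO(V)\times GO(V))$ with $GO(V)$ on the diagonal. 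The seesaw identity then reads, schematically, for automorphic forms $\phi_1,\phi_2$ on $GO(V)$,
\[
\langle\, \theta(f,\varphi),\ \overline{(\phi_1\boxtimes\phi_2)|_{\Delta GO(V)}}\,\rangle \;=\; \langle\, f|_{GSp_2\times GSp_2},\ \overline{\theta^{(0)}(\phi_1)\boxtimes\theta^{(2)}(\phi_2)}\,\rangle ,
\]
where $\theta^{(k)}$ is the theta lift for $(GL_2,GO(V))$ attached to $\varphi^{(k)}$---precisely the object computed in \autoref{prop:Watson}. The plan is to choose $\phi_1,\phi_2$ in automorphic representations of $GO(V)$ manufactured from $\pi_1,\pi_2$ (and matching the level $K_B$ and the archimedean weights of $\varphi^{(0)},\varphi^{(2)}$) so that: (i) $(\phi_1\boxtimes\phi_2)|_{\Delta GO(V)}$ has nonzero $\pi_{1,2}^+$-component---without this the left-hand side is automatically zero---and (ii) by \autoref{prop:Watson} each $\theta^{(k)}(\phi_i)$ is an explicit, nonzero newvector in the Jacquet--Langlands transfer of $\pi_i$. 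The right-hand pairing then unfolds into a product of local integrals times a global period, with the archimedean and $d(B)$-ramified local integrals exactly those Watson has already evaluated and shown to be nonzero.

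It then remains to verify the local matching. At the primes $p\mid d(B)$ the relevant local theta lifts and their newvectors are given explicitly by \autoref{prop:local_theta_GO4} and \autoref{lemma:Local_Siegel_fixed_vectors}, which one uses to check that the $K^{(2)}$-fixed vector $f_p\in\Pi_p$ corresponds under the seesaw to the $K_B$-fixed vectors entering $\phi_1,\phi_2$; at the remaining finite places all data are unramified and the local theta maps carry spherical vectors to spherical vectors, so the corresponding local integrals equal $1$. At the archimedean place, \autoref{lemma:tilde_varphi_infty_K_type} and \autoref{lemma:archimedean_K_type_one} identify $\varphi_\infty=\varphi^{(0)}_\infty\otimes\varphi^{(2)}_\infty$ with the distinguished $K_\infty$-type ($\sigma_{2,0}$) vector in $\Pi_\infty=\pi_{\infty,1}$, which is the configuration occurring in Watson's computation. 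Assembling these contributions shows that the displayed pairing is a nonzero scalar, so $\theta(f,\varphi)\neq 0$. As a by-product, feeding $\|\theta(f,\varphi)\|^2$ (now known nonzero) into the Rallis inner product formula and comparing with its Euler factorization yields the non-vanishing of the local doubling integrals $Z^0_v$ appearing in \autoref{thm:main_thm_shimura_curves}.

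The hard part will be the archimedean (and, secondarily, the $d(B)$-ramified) bookkeeping: one must match all the similitude factors in the extension \eqref{eq:def_extension_theta_R} of the Weil representation to $R(\mathbb{A})$ with the normalizations of \autoref{prop:Watson}, arrange the auxiliary forms $\phi_1,\phi_2$ compatibly with the weight-$0$/weight-$2$ asymmetry of $\varphi^{(0)}$ and $\varphi^{(2)}$ while still producing a nonzero $\pi_{1,2}^+$-component, and --- the real content of the argument --- check that the resulting global period does not vanish for the specific cusp forms $f_1,f_2$.
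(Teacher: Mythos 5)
Your high-level strategy is the same as the paper's: avoid the archimedean doubling integrals, exploit the factorization $\varphi = \varphi^{(0)} \otimes \varphi^{(2)}$ via the seesaw attached to $\iota_0\colon SL_2\times SL_2\hookrightarrow Sp_4$ against the diagonal $O(V)\hookrightarrow O(V)\times O(V)$, and feed in Watson's formula (\autoref{prop:Watson}) to make the inner theta lifts explicit. The concluding remark that the Rallis inner product formula then forces the local zeta integrals $Z^0_v$ to be nonzero is also correct and is exactly what the paper records after the proposition.

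However, there are two gaps.

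First, the seesaw identity as you have written it is not in a usable form: you pair $\theta(f,\varphi)$ on $GO(V)$ against $(\phi_1\boxtimes\phi_2)|_{\Delta GO(V)}$ and claim this equals a pairing of ``$f|_{GSp_2\times GSp_2}$'' against $\theta^{(0)}(\phi_1)\boxtimes\theta^{(2)}(\phi_2)$. When you actually unfold, you get $\int_{[Sp_4]}\overline{f(g)}\int_{[O(V)]}\theta(g,h;\varphi)\overline{\phi_1(h)\phi_2(h)}\,dh\,dg$, and the kernel $\theta(g,h;\varphi)$ only factorizes as $\theta(g_1,h;\varphi^{(0)})\theta(g_2,h;\varphi^{(2)})$ when $g=\iota_0(g_1,g_2)$; the full integral over $Sp_4$ does not split. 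The paper's way around this is to first lift $\overline{f_{1,2}}=\overline{f_1\boxtimes f_2}$ from $\tilde H^0$ to $GSp_4$, obtaining a form $F\in\Pi$; multiplicity one (via \autoref{lemma:Local_Siegel_fixed_vectors}, \autoref{lemma:archimedean_K_type_one} and $K^{(2)}$-invariance of $\varphi$) forces $F=\alpha f$, so by duality of the theta integral it is enough to show $F\ne0$. The seesaw is then applied to $F$, whose very definition already has an $[O(V)]$-integral against $\theta(\,\cdot\,,h;\varphi)$, and one restricts only the remaining $GSp_4$-variable to $\iota_0(SL_2\times SL_2)$. Relatedly, Watson's \autoref{prop:Watson} goes from $GL_2$ to $\tilde H^0$, so the objects to put on the ``small'' symplectic side should be cusp forms $s_1,s_2$ on $GL_2$ whose lifts $\theta(s_i,\varphi^{(k_i)})$ are the pure-tensor forms $s_i'(h_1)\overline{s_i'(h_2)}$, not forms $\phi_i$ on $GO(V)$ lifted back to $GL_2$; the direction in your write-up is inverted.

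Second, and more seriously, you defer exactly the point that carries the entire argument: ``check that the resulting global period does not vanish.'' After applying Watson, the pairing collapses to a product of two triple-product integrals on the Shimura curve $X_{B,K}$,
\begin{equation*}
\int_{X_{B,K}} s_1'(h)\,\overline{f_1(h)}\,s_2'(h)\,dh\cdot
\int_{X_{B,K}} \overline{s_1'(h)}\,\overline{f_2(h)}\,\overline{s_2'(h)}\,dh,
\end{equation*}
and one still has to produce $s_1$ making this nonzero. The paper's argument here is genuinely extra input: the Jacquet--Langlands image gives that the $L^2$-closure of the span of the $s_1'$ equals the orthogonal complement of the constants on $X_{B,K}$, and since $f_1,\overline{f_2}$ are independent holomorphic differentials, $X_{B,K}$ has genus $\ge2$; one then approximates $\delta_{z_1}-\delta_{z_0}$ with $s_2'(z_0)=0$ and $f_1(z_1)f_2(z_1)s_2'(z_1)\ne0$ to exhibit a nonzero value. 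Without this step, the proposal only reduces the problem to a period that could a priori vanish identically, so as written it does not establish $\theta(f,\varphi)\neq 0$.
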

\begin{proof}
Write $f_{1,2}=f_1 \boxtimes f_2$; we regard $f_{1,2}$ as a cusp form on $\tilde{H}^0(\mathbb{A})$. For $g \in GSp_4(\mathbb{A})$, choose $h \in \tilde{H}^0(\mathbb{A})$ such that $(g,h) \in R(\mathbb{A})$ and consider the function
\begin{equation*}
F(g)=\int_{SO(V)(\mathbb{Q}) \backslash SO(V)(\mathbb{A})}\overline{f_{1,2}(hh')}\theta(g,hh';\varphi)dh.
\end{equation*}
Then $F$ is an automorphic form on $GSp_4(\mathbb{A})$. Moreover, by \eqref{eq:local_theta_sign}, the function $F$ belongs to $\Pi=\Theta(\pi_1 \boxtimes \pi_2)$. By our choice of $\varphi$, the function $F$ is right invariant under $K^{(2)}$, and defines a highest weight vector in the $K$-type $\sigma_{(2,0)} \subset \Pi_\infty$. It follows that
\begin{equation*}
F=\alpha \cdot f
\end{equation*}
for some $\alpha \in \mathbb{C}$; in particular, to prove the claim it suffices to show that $F$ is not identically zero.

Recall the embedding $\iota_0:SL_2 \times SL_2 \rightarrow Sp_4$ defined in \eqref{eq:def_iota_0}. For a pair of cusp forms $s_1,s_2 \in \mathcal{A}_0(GL_2)$, consider the integral
\begin{equation*}
I(F,s_1,s_2)=\int_{(SL_2(\mathbb{Q}) \backslash SL_2(\mathbb{A}))^2}F(\iota_0(g_1,g_2))\overline{s_1(g_1)} \  \overline{s_2(g_2)}dg_1dg_2.
\end{equation*}
We need to show that $I(F,s_1,s_2) \neq 0$ for some pair $s_1,s_2$. Note that
\begin{equation*}
\theta(\iota_0(g_1,g_2),h;\varphi)=\theta(g_1,h;\varphi^{(0)}) \cdot \theta(g_2,h;\varphi^{(2)}),
\end{equation*}
with $\varphi^{(k)} \in \mathcal{S}(V(\mathbb{A}))$ given by \eqref{eq:def_varphi_(k)}. Interchanging the integrals we obtain
\begin{equation*}
I(F,s_1,s_2)=\int_{SO(V)(\mathbb{Q}) \backslash SO(V)(\mathbb{A})} \overline{f_{1,2}(h)} \theta(s_1,\varphi^{(0)})(h) \cdot \theta(s_2,\varphi^{(2)})(h)dh.
\end{equation*}
Choose $s_1$ and $s_2$ to be cuspidal eigenforms on $GL_2(\mathbb{A})$ of level $d(B)$; assume that $s_1$ has weight $0$ and that $s_2$ corresponds to a holomorphic cusp form of weight $2$. By \autoref{prop:Watson}, we have
\begin{equation*}
\begin{split}
\theta(s_1,\varphi^{(0)})(h_1,h_2)&=s_1'(h_1) \overline{s_1'(h_2)} \\
\theta(s_2,\varphi^{(2)})(h_1,h_2)&=s_2'(h_1) \overline{s_2'(h_2)}.
\end{split}
\end{equation*}
for certain (non-zero) automorphic forms $s_1'$ and $s_2'$ on $B(\mathbb{A})^\times$ that are right invariant under $K_B$. We conclude that
\begin{equation*}
I(F,s_1,s_2)=C \cdot \int_{X_{B,K}}s_1'(h)\overline{f_1(h)}s_2'(h)dh \int_{X_{B,K}}\overline{s_1'(h)} \ \overline{f_2(h)} \ \overline{s_2'(h)}dh
\end{equation*}
for some non-zero constant $C$. Note that the functions $\overline{f_1(h)}s_2'(h)$ and $\overline{f_1(h)} \ \overline{s_2'(h)}$ on $X_K$ are not identically zero. Moreover, by properties of the Jacquet-Langlands correspondence, the $L^2$-closure of the space generated by the functions $s_1'$ (for varying $s_1$) equals
\begin{equation*}
\{f \in L^2(X_{B,K}) \ | \ \int_{X_{B,K}}f(h)dh=0 \} \subset L^2(X_{B,K}).
\end{equation*}
It follows easily from this that for any fixed $f_1,f_2,s_2$ one can find $s_1$ such that $I(F,s_1,s_2) \neq 0$. Namely, note that the Riemann surface $X_{B,K}$ has genus at least $2$ (since $f_1$ and $\overline{f_2}$ correspond to independent holomorphic differentials on $X_{B,K}$). Hence there exists $z_0 \in X_{B,K}$ such that $s_2(z_0)=0$. Let $z_1 \in X_{B,K}$ such that $f_1(z_1)f_2(z_1)s_2'(z_1) \neq 0$. The claim follows by picking a sequence of functions $s_1$ approaching $\delta_{z_1}-\delta_{z_0}$.
\end{proof}

We note the following corollary of the previous proof.

\begin{corollary}
The automorphic form
\begin{equation*}
f \circ \iota_0 \in \mathcal{A}(SL_2 \times SL_2)
\end{equation*}
is not identically zero. More precisely, for any holomorphic cuspidal eigenform $s_2$ on $GL_2(\mathbb{A})$ of weight $2$ and level $d(B)$, we can find a cusp form $s_1$ on $GL_2(\mathbb{A})$ of weight $0$ and level $d(B)$ such that
\begin{equation*}
\int_{(SL_2(\mathbb{Q}) \backslash SL_2(\mathbb{A}))^2}f(\iota_0(g_1,g_2)) \overline{s_1(g_1)} \ \overline{s_2(g_2)}dg_1dg_2 \neq 0.
\end{equation*}
\end{corollary}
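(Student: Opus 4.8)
The plan is to deduce this corollary directly from the computations already carried out in the proof of \autoref{proposition:non_vanishing}. There we introduced the automorphic form
\[
F(g)=\int_{SO(V)(\mathbb{Q}) \backslash SO(V)(\mathbb{A})}\overline{f_{1,2}(hh')}\,\theta(g,hh';\varphi)\,dh \in \Pi ,
\]
where $f_{1,2}=f_1\boxtimes f_2$, and showed, using the $K^{(2)}$-invariance of $\otimes_{v\nmid\infty}\varphi_v$ together with the fact that $F$ defines a highest weight vector in the multiplicity-one $K_\infty$-type $\sigma_{(2,0)}\subset\Pi_\infty$, that $F=\alpha\cdot f$ for some scalar $\alpha\in\mathbb{C}$. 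The body of that proof establishes that $F$ is not identically zero; hence $\alpha\neq 0$ and $f=\alpha^{-1}F$. This is the only input we need to borrow.

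I would then recall the unfolding identity from the same proof. For cusp forms $s_1,s_2\in\mathcal{A}_0(GL_2)$, set
\[
I(F,s_1,s_2)=\int_{(SL_2(\mathbb{Q}) \backslash SL_2(\mathbb{A}))^2}F(\iota_0(g_1,g_2))\,\overline{s_1(g_1)}\,\overline{s_2(g_2)}\,dg_1\,dg_2 .
\]
Using the factorization $\theta(\iota_0(g_1,g_2),h;\varphi)=\theta(g_1,h;\varphi^{(0)})\,\theta(g_2,h;\varphi^{(2)})$, interchanging the order of integration, and invoking \autoref{prop:Watson} for the $(GL_2,GO(V))$-theta lifts $\theta(s_1,\varphi^{(0)})$ and $\theta(s_2,\varphi^{(2)})$, one rewrites $I(F,s_1,s_2)$ as a non-zero multiple of
\[
\int_{X_{B,K}}s_1'(h)\,\overline{f_1(h)}\,s_2'(h)\,dh\ \cdot \int_{X_{B,K}}\overline{s_1'(h)}\,\overline{f_2(h)}\,\overline{s_2'(h)}\,dh ,
\]
where $s_i'$ denotes the automorphic form on $B(\mathbb{A})^\times$ attached to $s_i$. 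Now fix an arbitrary holomorphic cuspidal eigenform $s_2$ of weight $2$ and level $d(B)$; the argument in \autoref{proposition:non_vanishing} constructs a weight-$0$ level-$d(B)$ form $s_1$ for which this product is non-zero, exploiting that the $L^2$-closure of the span of the forms $s_1'$ is the orthogonal complement of the constants in $L^2(X_{B,K})$ and that $X_{B,K}$ has genus $\geq 2$, which allows one to approximate $\delta_{z_1}-\delta_{z_0}$ for suitably chosen $z_0,z_1\in X_{B,K}$. Hence $I(F,s_1,s_2)\neq 0$ for this pair $(s_1,s_2)$.

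Finally, since $I(\cdot,s_1,s_2)$ is linear in its first argument and $f=\alpha^{-1}F$ with $\alpha\neq 0$, we obtain
\[
\int_{(SL_2(\mathbb{Q}) \backslash SL_2(\mathbb{A}))^2}f(\iota_0(g_1,g_2))\,\overline{s_1(g_1)}\,\overline{s_2(g_2)}\,dg_1\,dg_2=\alpha^{-1}\,I(F,s_1,s_2)\neq 0 ,
\]
which is exactly the quantitative assertion; in particular $f\circ\iota_0$ is not identically zero. I do not foresee any genuine obstacle here, since the substance of the statement is already contained in the proof of \autoref{proposition:non_vanishing}; the only points to watch are that the proportionality constant $\alpha$ really is non-zero (which is precisely the non-vanishing of $F$ proved there) and that the approximation step of that proof is organized so that $s_2$ is prescribed first and only $s_1$ is varied afterwards.
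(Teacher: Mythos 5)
Your proposal is correct and follows exactly the approach the paper intends: the paper offers no separate proof of this corollary, regarding it as immediate from the proof of \autoref{proposition:non_vanishing}, and your argument makes the two implicit steps explicit (that $F=\alpha f$ with $\alpha\neq 0$ because $F\neq 0$, and that the approximation argument there already works with $s_2$ fixed first and only $s_1$ varied). There is nothing to add or correct.
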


Finally, for $\varphi \in \mathcal{S}(V(\mathbb{A}_f)^2)$, recall the theta lift of $f$ valued in differential forms
\begin{equation*}
\theta(f,\varphi \otimes \varphi_\infty)_K \in \mathcal{A}^{1,1}(X_K)
\end{equation*}
defined by \eqref{eq:def_theta_lift_n-1_K}. Our last goal in this section is to show that this lift does not vanish. To do so, we need to relate the Kudla-Millson form $\varphi_{KM}$ to our archimedean Schwartz form $\varphi^{(2)}$; this is the purpose of the following lemma.

\begin{lemma}
Let 
\begin{equation*}
\varphi_{KM} \in [\mathcal{S}(V(\mathbb{R})) \otimes \wedge^2 \mathfrak{p}^*]^{K_{z_0}}
\end{equation*}
be the Kudla-Millson Schwartz form. Then there exists a linear functional $l:\wedge^2 \mathfrak{p}^* \rightarrow \mathbb{C}$ such that
\begin{equation*}
(1 \otimes l)(\varphi_{KM})=\varphi^{(2)} \in \mathcal{S}(V(\mathbb{R})).
\end{equation*}
\end{lemma}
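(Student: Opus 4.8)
The plan is to reduce the statement to an explicit identity of polynomials times the Gaussian and then read off the functional $l$ from the known shape of the rank-one Kudla--Millson form for $O(2,2)$. First I would rewrite $\varphi^{(2)}$ intrinsically. With $V(\mathbb{R}) \cong M_2(\mathbb{R})$, $Q = \det$, and $J_0 = \left(\begin{smallmatrix} 0 & 1 \\ -1 & 0\end{smallmatrix}\right)$, one has $a - ib + ic + d = \operatorname{tr}\!\big((1_2 + iJ_0)x\big) = \langle x, v_0\rangle$, where $\langle\,\cdot\,,\,\cdot\,\rangle$ is the bilinear form attached to $Q$ and $v_0 = 1_2 - iJ_0 \in V(\mathbb{C})$; a direct check gives $Q(v_0) = 0$, and $v_0$ lies in the complexification $V_{\mathbb{C}}^+$ of the positive-definite plane $V^+ := z_0^\perp$, which with these identifications is spanned by $1_2$ and $J_0$ (so that $z_0$ itself is the plane of symmetric trace-zero matrices). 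Hence
\[
\varphi^{(2)} = \langle\,\cdot\,, v_0\rangle^2 \cdot \varphi^0(\,\cdot\,, z_0),
\]
a Gaussian times a degree-two harmonic polynomial supported on $V^+$; the function $\langle\,\cdot\,,v_0\rangle^2 \varphi^0(\,\cdot\,,z_0)$ spans one of the two $SO(V^+)$-weight-$\pm2$ lines inside $\operatorname{Sym}^2 V_{\mathbb{C}}^+ \cdot \varphi^0(\,\cdot\,,z_0)$.

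Next, observe that the contractions $(1 \otimes l)\varphi_{KM}$, as $l$ ranges over $(\wedge^2\mathfrak{p}^*)^*$, sweep out exactly the linear span $W \subseteq \mathcal{S}(V(\mathbb{R}))$ of the Schwartz-function coefficients of $\varphi_{KM}$, so the lemma amounts to $\varphi^{(2)} \in W$. Since $\varphi_{KM}$ is $K_{z_0}$-invariant for the diagonal action, $W$ is a $K_{z_0}$-equivariant quotient of $\bigwedge^{1,1}\mathfrak{p}^* = \mathfrak{p}_+^* \otimes \mathfrak{p}_-^*$. Now I would invoke the explicit formula for the rank-one Kudla--Millson form for $O(2,2)$, directly from \cite{KudlaOrthogonal} or via the Fock-model computation of \cite[(4.2)]{BruinierFunke} already used in \autoref{lemma:tilde_varphi_infty_K_type}: in the Schrödinger model its coefficients are, up to nonzero constants, the functions $\big(\langle\,\cdot\,,u\rangle\langle\,\cdot\,,u'\rangle - \tfrac{1}{4\pi}\langle u,u'\rangle\big)\varphi^0(\,\cdot\,,z_0)$ with $u, u'$ running over a basis of $V_{\mathbb{C}}^+$. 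Thus $W$ contains all of $\operatorname{Sym}^2 V_{\mathbb{C}}^+ \cdot \varphi^0(\,\cdot\,,z_0)$; because $v_0$ is null the correction term disappears, so $\langle\,\cdot\,,v_0\rangle^2\varphi^0(\,\cdot\,,z_0) = \varphi^{(2)}$ lies in $W$, and taking $l$ to be (a rescaling of) the functional on $\wedge^2\mathfrak{p}^*$ extracting that coefficient proves the lemma. Alternatively one can argue more softly: the $SO(V^+)$-type of weight $\pm2$ occurs in $\bigwedge^{1,1}\mathfrak{p}^*$ with multiplicity one and has nonzero image in $W$, so $\varphi^{(2)} \in W$ up to a scalar absorbed into $l$.

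I expect the only real work to be the concrete identification of the Kudla--Millson coefficients: one must verify from the explicit formula that the $\operatorname{Sym}^2 V_{\mathbb{C}}^+$-isotypic piece (in particular its weight-$\pm2$ vectors) survives with nonzero coefficient in the quotient $W$, and one must keep the base point $z_0$, the complex structures, and the isomorphism $V(\mathbb{R}) \cong M_2(\mathbb{R})$ mutually consistent so that $v_0$ indeed lands in $V_{\mathbb{C}}^+$ (the ambiguity $v_0 \leftrightarrow \overline{v_0}$ being harmless, since both weight-$\pm2$ vectors lie in $W$). The remaining points — the rewriting of $\varphi^{(2)}$, the vanishing of the trace-correction, and reading off $l$ — are routine.
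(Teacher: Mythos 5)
Your main argument is essentially the paper's own: both rest on the explicit Schrödinger-model formula for the rank-one Kudla--Millson form, whose coefficients are of the shape $\bigl(x_i x_{i'} - \tfrac{\delta_{ii'}}{4\pi}\bigr)\varphi_0$ with $i,i'$ indexing a basis of the positive plane $V^+$, and both then read off an $l$ matching $\varphi^{(2)}$. The one nice reframing you add is the intrinsic description $\varphi^{(2)} = \langle\,\cdot\,,v_0\rangle^2\varphi_0$ with $v_0 = 1_2 - iJ_0$ a \emph{null} vector in $V^+_{\mathbb{C}}$, which explains conceptually why the $-\tfrac{1}{4\pi}$ correction terms cancel for the relevant contraction -- the paper instead sees this cancellation directly from its explicit choice $l(\omega_{13}\wedge\omega_{14}) = 1$, $l(\omega_{23}\wedge\omega_{24}) = -1$, $l(\omega_{13}\wedge\omega_{24}) = l(\omega_{23}\wedge\omega_{14}) = i$.

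One caution on the ``softer'' alternative you sketch at the end: knowing that the $SO(V^+)$-weight-$\pm 2$ characters occur with multiplicity one in $\bigwedge^{1,1}\mathfrak{p}^*$ and have nonzero image in $W$ tells you that $W$ has nonzero weight-$2$ vectors, but does not by itself identify that line with $\mathbb{C}\cdot\varphi^{(2)}$; you also need to know that $W$ lives inside (degree $\leq 2$ polynomials in the $V^+$-coordinates)$\cdot\varphi_0$, where the weight-$2$ piece is genuinely one-dimensional. That extra input is available from the same explicit formula, so the gap is easily closed, but as stated that paragraph is not self-contained.
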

\begin{proof}
We recall the definition of $\varphi_{KM}$. Consider the orthogonal basis of $V(\mathbb{R})\cong M_2(\mathbb{R})$ given by
\begin{equation*}
\begin{split}
v_1= \left( \begin{array}{cc} 1 & \\ & 1 \end{array} \right) \qquad & v_2= \left( \begin{array}{cc}  & -1 \\ 1 &  \end{array} \right) \\
v_3= \left( \begin{array}{cc}  & 1 \\ 1 &  \end{array} \right) \qquad & v_4= \left( \begin{array}{cc} 1 & \\ & -1 \end{array} \right).
\end{split}
\end{equation*}
Note that $Q(v_1,v_1)=Q(v_2,v_2)=1$ and $Q(v_3,v_3)=Q(v_4,v_4)=-1$ and let $z_0 \in \mathbb{D}$ be the plane spanned by $\{v_3,v_4\}$. This basis gives an isomorphism $SO(V(\mathbb{R})) \cong SO(2,2)$ carrying $K_{z_0}$ to $SO(2) \times SO(2)$. Denote the coordinates of $v \in V(\mathbb{R})$ with respect to this basis by $x_i(v)$, $1\leq i \leq 4$. Given $v=\left( \begin{smallmatrix} a&b\\ c&d \end{smallmatrix}\right)$, we have
\begin{equation*}
a-ib+ic+d=x_1(v)+ix_2(v)
\end{equation*}
and hence the value of $\varphi^{(2)}$ at $z_0$ is given by
\begin{equation*}
\varphi^{(2)}=(x_1^2-x_2^2+2ix_1x_2) \cdot \varphi_0 \in \mathcal{S}(V(\mathbb{R})),
\end{equation*}
where we write $\varphi_0(v)=e^{-\pi (v,v)_{z_0}}$.

To describe the form $\varphi_{KM}$, note that the basis above identifies $\mathfrak{p}_0 \cong Hom_\mathbb{R}(z_0,z_0^\perp)$. We write $X_{ij}$, $1\leq i \leq 2$, $3 \leq j \leq 4$ for the resulting basis of $\mathfrak{p}$ (defined by $X_{ij}(v_k)=\delta_{jk}v_i$) and $\omega_{ij} \in \mathfrak{p}^*$ for its dual basis. Then (see \cite[\textsection 5]{KudlaMillson3})
\begin{equation*}
\begin{split}
\varphi_{KM}(\cdot,z_0)&=\frac{1}{4}\prod_{j=3}^4 \sum_{i=1}^2 \left(x_i -\frac{1}{2\pi} \frac{\partial}{\partial x_i} \right) \varphi_0 \otimes \omega_{ij} \\
&= \left( x_1^2-\frac{1}{4\pi} \right)\varphi_0 \otimes (\omega_{13} \wedge  \omega_{14}) + \left( x_2^2-\frac{1}{4\pi} \right)\varphi_0 \otimes (\omega_{23} \wedge  \omega_{24})\\
&\quad + x_1x_2 \varphi_0 \otimes (\omega_{13} \wedge \omega_{24}+\omega_{23} \wedge \omega_{14}).
\end{split}
\end{equation*}
Thus the claim follows if we set $l(\omega_{13} \wedge  \omega_{14})=1$, $l(\omega_{23} \wedge  \omega_{24})=-1$ and $l(\omega_{13} \wedge \omega_{24})=l(\omega_{23} \wedge \omega_{14})=i$.
\end{proof}

Together with \autoref{proposition:non_vanishing}, this implies the following corollary.

\begin{corollary}
Let $\varphi=\otimes_{v \nmid \infty } \varphi_v \in \mathcal{S}(V(\mathbb{A}^\infty)^2)$ with $\varphi_v$ as in \eqref{eq:def_varphi_example}. Then the theta lift $\theta(f,\varphi \otimes \varphi_\infty)$ defines a non-zero form in $\mathcal{A}^{1,1}(X_K)$.
\end{corollary}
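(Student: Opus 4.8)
The plan is to deduce the statement from the non-vanishing of the scalar theta lift established in \autoref{proposition:non_vanishing}, by applying the linear functional $l$ produced by the lemma above. Recall that in this situation $F=\mathbb{Q}$ and $V$ has signature $(2,2)$, so $n=2$, there is a unique archimedean place, and by \eqref{eq:def_varphi_infty_n-1_2} the archimedean Schwartz form is $\varphi_\infty=\tilde\varphi_\infty=\varphi^0\otimes\varphi_{KM}$ (the factor $\Omega^{n-2}=\Omega^0$ being trivial). First I would pull back the $(1,1)$-form $\theta(f,\varphi\otimes\varphi_\infty)_K\in\mathcal{A}^{1,1}(X_K)$ to a $\bigwedge^{1,1}\mathfrak{p}^*$-valued automorphic function on $H(\mathbb{A})$ via an identification of the type \eqref{eq:isom_volume_forms_aut_forms}, and compose it with $l\colon\bigwedge^{1,1}\mathfrak{p}^*\to\mathbb{C}$, evaluating the resulting form at the base point $z_0$. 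This produces a scalar automorphic function $l\big(\theta(f,\varphi\otimes\varphi_\infty)_K\big)$ on $X_K$, and it suffices to show this function is not identically zero.

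The key step is to recognise $l\big(\theta(f,\varphi\otimes\varphi_\infty)_K\big)$ as the scalar theta lift $\theta(f,\varphi)$ of \autoref{proposition:non_vanishing}, up to a nonzero constant. Here $l$ acts only on the $\bigwedge^{1,1}\mathfrak{p}^*$-factor of $\varphi_\infty$, hence commutes both with the Weil-representation action of $Sp_4(\mathbb{R})$ on the Schwartz-function factor and with the integration over $Sp_4(\mathbb{Q})\backslash Sp_4(\mathbb{A})$ in \eqref{eq:def_theta_lift_n-1_K}. By linearity of the theta integral, $l\big(\theta(f,\varphi\otimes\varphi_\infty)_K\big)$ is therefore the scalar theta lift of $f$ attached to the adelic Schwartz function with non-archimedean part the chosen $\varphi=\otimes_{v\nmid\infty}\varphi_v$ and archimedean part $(1\otimes l)\varphi_\infty$ evaluated at $z_0$, namely $\varphi^0(\cdot,z_0)\otimes(1\otimes l)\varphi_{KM}=\varphi^{(0)}_\infty\otimes\varphi^{(2)}_\infty$ by the lemma above. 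This is exactly the Schwartz function of \eqref{eq:def_varphi_example}, so under the dictionary between automorphic forms on the $GSpin(V)$-Shimura variety and the theta lift to $O(V)$ set up in \cite{GarciaRegularizedLiftsI}, the function $l\big(\theta(f,\varphi\otimes\varphi_\infty)_K\big)$ agrees with $\theta(f,\varphi)$ up to a positive constant coming from \eqref{eq:isom_volume_forms_aut_forms}.

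Granting this identification the corollary is immediate: \autoref{proposition:non_vanishing} gives $\theta(f,\varphi)\neq0$, hence $l\big(\theta(f,\varphi\otimes\varphi_\infty)_K\big)\not\equiv0$, hence $\theta(f,\varphi\otimes\varphi_\infty)_K\neq0$ in $\mathcal{A}^{1,1}(X_K)$, and a fortiori $\theta(f,\varphi\otimes\varphi_\infty)\neq0$. I expect the only genuine work to lie in the middle step. One point to check is that evaluating a $(1,1)$-form against the bivector dual to $l$ — which is not $SO(2,2)$-invariant — is legitimate; it is, since non-vanishing is a pointwise question and may be tested at $z_0$. The other is the bookkeeping of the normalizing constant in \eqref{eq:isom_volume_forms_aut_forms} together with the comparison between automorphic forms on the $GSpin(V)$-Shimura variety $X_K$ and the theta lift to $GO(V)$; once these are pinned down, no input beyond \autoref{proposition:non_vanishing} and the lemma above is required.
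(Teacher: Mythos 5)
Your proposal is correct and matches what the paper intends: the lemma immediately preceding the corollary is there precisely so that applying the functional $l$ inside the theta integral converts the form-valued lift $\theta(f,\varphi\otimes\varphi_\infty)$ into the scalar lift $\theta(f,\varphi)$ of Proposition \ref{proposition:non_vanishing}, whose non-vanishing then forces the form to be non-zero. The only wrinkle in your write-up is the phrase ``scalar automorphic function on $X_K$'': since $l$ is not $K_{z_0}$-equivariant, the composite $l(\theta(f,\varphi\otimes\varphi_\infty))$ lives naturally as a function on $H(\mathbb{A})$ (or on $\mathbb{D}\times H(\mathbb{A}_f)$) rather than descending to $X_K$, but this does not affect the conclusion, since a non-zero function upstairs still certifies that the form is non-zero.
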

In particular, by the Rallis inner product formula, the zeta integrals $Z^0_v(f^\vee_v,f_v,\delta_*\Phi,1,s)$ in Theorem 1.1 are non-vanishing at $s=-1/2$. It remains to check that hypotheses \ref{hyp:main_thm_pairing} hold for the representation $\Pi$ and our choice of $f$ and $\tilde{\varphi}=\varphi$; the results above show that this is the case for hypotheses (1), (2), (4) and (5). Setting $K^\infty=K^{(2)}$ and $\sigma^\infty$ to be the trivial representation of $K^\infty$, one sees that (3) holds by \autoref{lemma:Local_Siegel_fixed_vectors}. Now \autoref{thm:main_thm_shimura_curves} follows from Theorem 1.1.


%


\bibliographystyle{abbrv}
\bibliography{refs} 

\vspace{1cm}

\author{\noindent Department of Mathematics,
  South Kensington Campus,
  Imperial College London \\
  London, UK, SW7 2AZ \\
  \texttt{l.garcia@imperial.ac.uk}}

\end{document}